\newcommand{\R}{\mathbb{R}}
\newcommand{\intlim}{\int\displaylimits}
\newcommand{\vbf}{\mathbf{v}}
\newcommand{\Vrm}{\mathrm{V}}
\newcommand{\wbf}{\mathbf{w}}
\newcommand{\x}{\mathbf{x}}
\newcommand{\X}{\mathbf{X}}
\newcommand{\urm}{\mathrm{u}}
\newcommand{\Urm}{\mathrm{U}}
\newcommand{\y}{\mathbf{y}}
\newcommand{\Y}{\mathbf{Y}}
\newcommand{\E}{\mathbb{E}}
\newcommand{\PR}{\mathbb{P}}
\newcommand{\Vol}{\mathrm{Vol}}
\newcommand{\Conv}{\mathrm{Conv}}
\newcommand{\Center}{\mathrm{Center}}
\newcommand{\CO}{\mathcal{C}_0}
\newcommand{\C}{\mathcal{C}}
\newcommand{\VO}{\mathcal{V}}
\newcommand{\VOM}{\mathcal{V}_{\blacktriangle}}
\newcommand{\VOMR}{\mathcal{V}_{\blacktriangle}^{\geq t}}
\newcommand{\Sp}{\mathcal{S}}
\newcommand{\Ball}{\mathcal{B}}
\newcommand{\D}{\mathcal{D}}
\newcommand{\sign}{\text{sign}}
\newcommand{\Card}{\#}
\newcommand{\fall}{\text{ for all }}
\newcommand{\Cst}{C}
\newcommand{\Rcirc}{\mathcal{D}}
\newcommand{\Proj}{P}
\newcommand{\m}{\mathrm}
\newcommand{\1}{1}
\newtheorem{theorem}{Theorem}
\newtheorem{proposition}{Proposition}
\newtheorem{lemma}{Lemma}
\newtheorem{remark}{Remark}
\title{Sharp asymptotics for the maximal distance from the boundary to the nucleus of a typical Poisson-Voronoi cell}
\author{P. Calka, C. D'Errico and N. Enriquez}
\date{}
\begin{document}

    \maketitle
\begin{abstract}
    We consider the typical Poisson-Voronoi cell in the Euclidean space ${\mathbb R}^d$ and in particular the maximal distance ${\mathcal D}$ from a vertex of that cell to its nucleus. We provide a sharp asymptotics for the tail distribution of ${\mathcal D}$. As a byproduct, we prove that the extremal index related to the sequence of such distances for all Voronoi cells included in a large box is equal to $(2d)^{-1}$. This confirms a conjecture formulated by Chenavier and Robert in \cite{ChenavierRobert}. The explicit constant appearing in the estimate of the tail probability of ${\mathcal D}$ is proved to be the mean volume of a random simplex formed by uniformly distributed points on the unit sphere conditioned on satisfying some spatial condition.
  \end{abstract}
  ~\\
%\small{\textit{MSC2020 subject classifications.} Primary 60D05; Secondary 52A22, 60G55}\\
%\small{\textit{Key words and phrases.} Poisson-Voronoi tessellation, Poisson point process, random simplex, extremal index, typical cell}
\footnote{\textit{MSC2020 subject classifications.} Primary 60D05; Secondary 52A22, 60G55}
\footnote{\textit{Key words and phrases.} \hspace*{-.15cm} Poisson-Voronoi tessellation, Poisson point process, random simplex, extremal index, typical cell}
	\tableofcontents
	
	\newpage
	
	\newpage
	\section{Introduction and statement of the result}
	
	The homogeneous Poisson-Voronoi tessellation is certainly the most popular model of random tessellation in stochastic geometry. Indeed, random Voronoi tessellations have been commonly used for representing a large variety of natural phenomena (cell biology, astrophysics, crystal growth, ...) and for this reason, they have been studied intensively starting from the second half of the 20th century, see e.g. \cite[Chapter 9.2]{Stoyan}, \cite[Chapter 1.2]{Okabe} and \cite{Moller}.  Despite this general interest, the explicit laws of most of the geometric functionals of the Poisson-Voronoi tessellation are still unknown, except for the case of the minimal distance from the boundary to the nucleus of the typical cell. In particular, the study of the regularity of the Poisson-Voronoi tessellation raises the question of the frequency of elongated cells, that is, the law of the maximal distance from a vertex of the typical cell to its nucleus. This random variable, denoted by $\D$ in the case of a Poisson-Voronoi tessellation with intensity $1$, will be the central object of this work.
	
	In the two-dimensional case, Calka \cite{Calka2002} obtains a double bound on the tail probability of $\D$ by an indirect method based on the random covering of the circle. The two bounds match, up to a multiplicative constant. With a slightly different point of view, Calka and Chenavier \cite{CalkaChen14} investigate the asymptotics of extremes of geometric characteristics along all $d$-dimensional Poisson-Voronoi cells intersecting a given convex body. In this respect, they obtain in particular the convergence in distribution when the underlying intensity $\gamma$ goes to infinity for the maximum, denoted by $R_{\mbox{\tiny{max}}}(\gamma)$ therein, among all cells in a fixed window of the maximal distance from their boundary to their nucleus. The method used in that paper relies on a reinterpretation of the problem in terms of the covering of the window by Poisson balls in the spirit of Janson \cite{Janson}. This trick makes it possible to completely avoid the classical way for deriving extremal asymptotics. This consists in starting with sharp asymptotics for the tail probability of $\D$, then combining them with mixing properties and especially with the computation of the so-called extremal index, often reinterpreted as the inverse of the mean cardinality of a cluster of exceedances \cite{Leadbetter}. However, the tail probability of $\D$ was largely unknown at the time. As a consequence, the value of the extremal index has stayed an open question, although conjectured to be \((2d)^{-1}\) in the work due to Chenavier and Robert \cite[Section 4.3]{ChenavierRobert}.
	
	In this paper, we aim at deriving sharp asymptotics for the tail probability of $\D$. In turn, this result combined with the extremal convergence from \cite{CalkaChen14} induces the calculation of the extremal index mentioned above and whose existence is guaranteed by \cite[Theorem 4]{ChenavierRobert}. Usually, the calculation of the extremal index together with the tail probability is a prerequisite to the identification of the limit distribution for the maximum of a stationary sequence of real variables. In our case, we benefit from the knowledge of the limit distribution in \cite{CalkaChen14} combined with our Theorem \ref{thm_main_result} to deduce the extremal index, see Theorem \ref{thm_extremal_index}. The determination of the extremal index in stochastic geometry is in general a delicate matter, see e.g. \cite{ChenavierRobert} and \cite{Basrak}. In \cite[section H]{Aldous}, Aldous provides heuristic arguments which give a good intuition of the configuration of the tessellation in the neighborhood of a vertex of a Voronoi cell with large distance to its corresponding nucleus. This should in principle provide information on the mean size of a cluster of exceedances and subsequently on the required extremal index. Nevertheless, it does not lead to the sharp asymptotics for $\D$ that we are seeking.
	
	%In this article we find the tail probability of the circumradius of the typical cell and then we use it to prove the conjectured value of the extremal index by comparison with the law of the extremes given in \cite{CalkaChen14} [ASK, WHAT EXACTLY IS IN CC2014?].
	
	In order to state our main results, we need to introduce some notation. We denote by \(\Phi\) a Poisson point process on the space \(\R^d\) with the Lebesgue measure as the intensity measure. To each point \(\m X\in\Phi\), we associate its Voronoi cell \(\C(\m X,\Phi)\) which is the set of all points in space that are closer to \(\m X\) than to any other point of \(\Phi\). Explicitly,
	\[
	\C(\m X,\Phi) = \lbrace \m y\in\R^d : \|\m X-\m y\|\leq\|\m X'-\m y\| \fall X'\in\Phi \rbrace
	\]
	where \(\|\cdot\|\) is the Euclidean norm. The point \(\m X\) is called \emph{nucleus} of the cell \(\C(\m X,\Phi)\) and the collection of all cells constitutes the \emph{Voronoi tessellation} associated to \(\Phi\). Informally, the \emph{typical cell} of the Voronoi tessellation is the cell chosen uniformly at random within the set of cells intersecting a large window. Equivalently, it is the cell associated to the nucleus \(0\) of the Voronoi tessellation generated by a Poisson point process to which we add the origin, that is \(\Phi\cup\lbrace0\rbrace\), see e.g. \cite[Proposition 4.4.1]{Moller}. Henceforth, when referring to the Poisson-Voronoi typical cell, we mean the cell
	\begin{equation*}
		\CO = \C(0,\Phi\cup\lbrace0\rbrace).
	\end{equation*}
	The maximal distance from a vertex of $\CO$ to its nucleus at $0$ can be rewritten as
	\begin{equation*}
		\D = \min \left\{ \m r \geq 0 \;:\; \CO \subseteq \Ball_{\m r}(0) \right\}
	\end{equation*}
	where $\Ball_{\m r}(\m x)$ denotes the ball centered at $\m x\in\R^d$ and of radius $\m r>0$.

	Our main result, stated in Theorem \ref{thm_main_result}, is the tail of the distribution of \(\Rcirc\). For two positive functions $f$ and $g$, we note $f={\mathcal O}(g)$ when $t\to\infty$ if the ratio $f(t)/g(t)$ is bounded for large $t$. We also denote by $\kappa_d$ the Lebesgue measure of the $d$-dimensional unit ball, given by the formula
	\begin{equation}\label{eq:defkappad}
	    \kappa_d = \frac{\pi^{\frac{d}{2}}}{\Gamma(\frac{d}{2}+1)}.
	\end{equation}
	\begin{theorem}\label{thm_main_result}
		When \(t \rightarrow \infty\),
		\begin{equation}\label{eq_thm_main_result}
			\PR(\D\geq t) = C_d (d\kappa_d)^d t^{d(d-1)} e^{-\kappa_d t^d} + \mathcal{O}( t^{d(d-2)} e^{-\kappa_d t^d})
		\end{equation}
		where 
		\begin{equation}\label{eq:value of Cd}
			C_d = \frac{1}{2^{d-1}\sqrt{\pi}(d-1)!} \frac{\Gamma\left(\frac{d}{2}\right)^d}{\Gamma\left(\frac{d+1}{2}\right)^{d-1}}.
			%\frac{1}{d!} \frac{2^{2d-1}\pi^{\frac{d(d-1)-1}{2}}}{\Gamma(\frac{d-1}{2})^{d+1}\Gamma(\frac{d}{2})}.
		\end{equation}
	\end{theorem}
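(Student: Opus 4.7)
The plan is to apply the Slivnyak--Mecke formula to a carefully chosen family of Voronoi vertices of $\CO$. Every vertex $v$ of $\CO$ arises from $d$ other Poisson points $X_1,\ldots,X_d\in\Phi$ such that the circumball $\Ball_{\|v\|}(v)$ of $\{0,X_1,\ldots,X_d\}$ contains no further point of $\Phi$, an event of probability $e^{-\kappa_d\|v\|^d}$ by Slivnyak. The naive first moment of the number of such vertices at distance $\geq t$ from the origin over-estimates $\PR(\D\geq t)$ by a dimensional constant (visibly a factor $3/2$ already in dimension two), since a single exceedance of $\CO$ beyond $\Ball_t(0)$ typically produces several vertices in the exterior of $\Ball_t(0)$, as anticipated by Aldous's heuristic. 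I therefore restrict the count to the vertices that are \emph{local maxima of $\|\cdot\|$ along $\partial\CO$}, namely those $v$ with $v/\|v\|\in\mathrm{cone}(X_1/\|X_1\|,\ldots,X_d/\|X_d\|)$ (the outer normal cone of $\CO$ at $v$). Let $K_t$ denote their number: since the global maximum of $\|\cdot\|$ on $\partial\CO$ is automatically a local maximum, one has the exact identity $\{\D\geq t\}=\{K_t\geq 1\}$, and Bonferroni gives $\PR(\D\geq t) = \E[K_t] + O(\E[K_t(K_t-1)])$.

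To compute $\E[K_t]$ I combine Slivnyak--Mecke with the spherical parametrization $v = ru$, $x_i = r(u+\theta_i)$ where $u,\theta_i\in S^{d-1}$ and $r = \|v\|$. A block-triangular decomposition of the Jacobian matrix, splitting each $dx_i$ into its components along $\theta_i$ and along the tangent space $\theta_i^\perp$, yields
\[
dx_1\cdots dx_d = d!\,\Vol_d(\mathrm{conv}(0,x_1,\ldots,x_d))\,r^{d(d-2)}\,dv\,d\theta_1\cdots d\theta_d,
\]
and the scaling identity $\Vol_d(\mathrm{conv}(0,x_1,\ldots,x_d)) = r^d\Vol_d(\mathrm{conv}(0,u+\theta_1,\ldots,u+\theta_d))$ isolates all $r$-dependence to a single power. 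After substituting $dv = r^{d-1}\,dr\,du$ the radial and angular integrals decouple; the radial integral $\int_t^\infty r^{d^2-1}e^{-\kappa_d r^d}\,dr = \frac{t^{d(d-1)}}{d\kappa_d}\,e^{-\kappa_d t^d}(1+O(t^{-d}))$ is handled by integration by parts and yields both the leading term and the subleading $O(t^{d(d-2)}e^{-\kappa_d t^d})$ correction.

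The remaining angular integral produces the constant $C_d$. Translating the simplex $(0,u+\theta_1,\ldots,u+\theta_d)$ by $-u$ and relabeling $\theta_0=-u$, one obtains a simplex with all $d+1$ vertices on $S^{d-1}$, and the local-max condition becomes a geometric spatial condition on this spherical simplex (equivalently, that the ray from $\theta_0$ through the origin exits through the opposite facet). Averaging over $u$, the angular integral equals $\omega_{d-1}^{d+1}$ times $\E[\Vol_d\cdot\mathbf{1}_{\{\text{spatial condition}\}}]$ for iid uniform points $\theta_0,\ldots,\theta_d$ on $S^{d-1}$; using $\omega_{d-1}=d\kappa_d$ this gives $C_d$ the probabilistic interpretation announced in the abstract. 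An explicit Blaschke--Petkantschin decomposition on the sphere reduces the integral to a product of one-dimensional Beta integrals and recovers the closed form \eqref{eq:value of Cd}. The main obstacle in this programme is the second-moment bound $\E[K_t(K_t-1)] = O(t^{d(d-2)}e^{-\kappa_d t^d})$: it requires a pair version of Slivnyak--Mecke and a sharp estimate of the void probability of the union of two large empty balls through the origin, with a careful account of their overlap geometry.
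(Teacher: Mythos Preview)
Your overall strategy is essentially the paper's: you correctly single out the locally-maximizing (``pointy'') vertices, observe that $\{\D\geq t\}=\{K_t\geq 1\}$, sandwich the probability between $\E[K_t]-\E[K_t(K_t-1)]$ and $\E[K_t]$, compute $\E[K_t]$ via Mecke and the spherical Blaschke--Petkantschin change of variables, and identify the second-moment bound as the main technical hurdle. Your normal-cone characterization $v/\|v\|\in\mathrm{cone}(X_1,\ldots,X_d)$ is equivalent to the paper's projection criterion (Lemma~\ref{prop_equivalent_characterization_of_VOM}), and your Jacobian and radial-integral computations match Proposition~\ref{thm_E_nr_locmaxvert}. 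The second-moment programme you sketch (pairwise Mecke, volume of the union of two large balls through $0$) is exactly Section~\ref{sec_expected_nr_pairs}, where most of the paper's analytic effort lies.

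The one place where your plan is too optimistic is the evaluation of $C_d$. You write that ``an explicit Blaschke--Petkantschin decomposition on the sphere reduces the integral to a product of one-dimensional Beta integrals''. This does not work directly: the indicator of the pointy condition couples all the angular variables, so the integral does not factorize under any standard spherical BP formula. The paper's actual argument (Section~\ref{sec_volume_kingman}) is more delicate: pass to cylindrical coordinates about the $U_0$-axis, observe that the indicator then depends only on the $V_i$'s and not on the heights $Z_i$, use the pointy condition to decompose the simplex into sub-simplices pinned at the origin, and apply a symmetrization-plus-signed-sum identity to obtain a recursion $C_d = \tfrac{1}{2(d-1)}\bigl(B(\tfrac{d}{2},\tfrac12)/B(\tfrac{d-1}{2},\tfrac12)\bigr)^{d-1} C_{d-1}$, which is then unwound from $C_2=1/\pi$. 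Without this (or an equivalent device for decoupling the indicator), your proposed route to the closed form \eqref{eq:value of Cd} has a genuine gap.
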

	%The main result of this paper, given in Theorem \ref{thm_main_result}, is the distribution tail of \(\Rcirc\). 
    It is in general a delicate task to make explicit the distribution of a size functional of $\CO$. To the best of our knowledge, the most precise asymptotics are given in 
    %\cite{HugReitznerSchneider}, 
    \cite[Theorem 2]{HugSchneiderAsymptotic} and are logarithmic asymptotics for the tail probability of a series of size functionals of $\CO$ which are continuous, homogeneous and increasing. This includes $\Rcirc$ as well as the volume, the surface area, the diameter and so on.
    %Concerning size functionals of the typical cell, it is common to only obtain the logarithmic equivalent of the tail, see e.g.  for the logarithmic equivalent of the intrinsic volumes. 
    Surprisingly, we derive in Theorem \ref{thm_main_result} a more precise estimate in the case of \(\D\). The exponential factor of the right-hand side in \eqref{eq_thm_main_result} is consistent with both the two-dimensional bounds given in \cite{Calka2002} and  \cite[Theorem 2]{HugSchneiderAsymptotic}. This factor corresponds to the probability that a given point at distance $t$ from the origin lies in $\CO$, i.e. that the Poisson point process avoids a given large spherical region of radius $t$ adjacent to the nucleus of the typical cell. However, having $\Rcirc$ larger than $t$ means in fact that there exists such an empty region which can be positioned in any direction around the nucleus and this gives rise to a polynomial factor, see the integral calculation in Section \ref{sec_expectation_of_nr_of_some_distant_vertices}, of which we make explicit the power in \eqref{eq_thm_main_result}. This factor $t^{d(d-1)}$ reflects the fact that $d$ Poisson points together with the origin must lie on a $(d-1)$-dimensional sphere of radius $t$ simultaneously to give birth to a vertex distant from the origin by $t$. Most importantly, we make fully explicit the constant in front of the polynomial and exponential factors. Its computation leads to the resolution of an interesting and independent problem regarding random simplices in the unit ball: %as emphasized in Theorem \ref{thm_value_of_C_d} below. 
	\begin{theorem}\label{thm_value_of_C_d}
	 Let us denote by \(\m U_0,...,\m U_d\) $(d+1)$ i.i.d. uniform random variables on the \((d-1)\)-dimensional unit sphere \(\Sp_{\R^d}\) and by \(\Delta_{d}(\m U_0,...,\m U_d)\) the $d$-dimensional volume of the %simplex with vertices at $U_0,...U_d$. %= \Vol_d(\Conv(U_0,...,U_d))\) the \(d\)-volume of the 
	 \(d\)-simplex generated by these random points. Then
		\begin{equation}\label{eq_C_d_as_expectation_intro}
			\E\left[ \Delta_{d}(\m U_0,...,\m U_d) 
			 \1_{0\in\Conv(P_{\m U_0^\perp}(\m U_1),...,P_{\m U_0^\perp}(\m U_d))} \right]=C_d
		\end{equation}
		where \(P_{\m U_0^\perp}(\cdot)\) denotes the orthogonal projection onto the hyperplane \(\m U_0^\perp\) and $C_d$ is given at \eqref{eq:value of Cd}.
	\end{theorem}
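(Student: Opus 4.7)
The plan is to reduce the expectation on the left of \eqref{eq_C_d_as_expectation_intro} to a product of elementary one-dimensional integrals and a purely spherical factor, and then resolve the spherical factor by a sign-symmetry argument.

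First, by rotational invariance of the uniform measure on $\Sp_{\R^d}$, I would condition on $\m U_0 = e_d$ and parameterize each remaining point as $\m U_i = \cos\theta_i \cdot e_d + \sin\theta_i \cdot \m W_i$ with $\theta_i \in [0,\pi]$ and $\m W_i$ a unit vector of $e_d^\perp$. The pairs $(\theta_i,\m W_i)$ are then iid, with $\theta_i$ independent of $\m W_i$: the angle $\theta_i$ has density proportional to $\sin^{d-2}\theta$ on $[0,\pi]$ and $\m W_i$ is uniform on the unit sphere of $e_d^\perp$. Since $P_{e_d^\perp}(\m U_i) = \sin\theta_i \m W_i$ and $\sin\theta_i > 0$ almost surely, the indicator simplifies to $\1_{0 \in \Conv(\m W_1, \ldots, \m W_d)}$. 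Expanding $d!\,\Delta_d(e_d, \m U_1, \ldots, \m U_d) = |\det(\m U_1 - e_d, \ldots, \m U_d - e_d)|$ along the last row and using the half-angle identity $(1 - \cos\theta)/\sin\theta = \tan(\theta/2)$ yields
\[
d!\,\Delta_d = \Big(\prod_{j=1}^d \sin\theta_j\Big)\,\Big|\sum_{i=1}^d \tan(\theta_i/2)\,(-1)^{d+i}\det(\m W_1, \ldots, \widehat{\m W_i}, \ldots, \m W_d)\Big|.
\]
On the event $\{0 \in \Conv(\m W_1, \ldots, \m W_d)\}$, the non-negativity of the barycentric coordinates of $0$ forces the signs of the summands to agree, so the absolute value can be distributed over the sum.

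Combining this formula with the exchangeability of the $d$ indices and the independence of the families $(\theta_j)_j$ and $(\m W_j)_j$, the expectation factorizes into
\[
\frac{1}{(d-1)!}\,\E[\tan(\theta_1/2)\sin\theta_1]\,\E[\sin\theta_2]^{d-1}\,\E[|\det(\m W_2, \ldots, \m W_d)|\,\1_{0 \in \Conv(\m W_1, \ldots, \m W_d)}].
\]
The first factor equals $\E[1 - \cos\theta] = 1$ by parity of $\cos$ against $\sin^{d-2}$, and the second is the standard Beta-function ratio $\Gamma(d/2)^2/(\Gamma((d+1)/2)\Gamma((d-1)/2))$. The hard part is the remaining spherical factor, which I would handle by a sign-symmetry argument: for any $\epsilon = (\epsilon_2, \ldots, \epsilon_d) \in \{\pm 1\}^{d-1}$, the tuple $(\epsilon_2 \m W_2, \ldots, \epsilon_d \m W_d)$ has the same distribution as $(\m W_2, \ldots, \m W_d)$ while $|\det|$ is sign-invariant. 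For $\m W_2, \ldots, \m W_d$ in general position, the $2^{d-1}$ simplicial cones they generate under sign changes partition $\R^{d-1}$ almost surely, so exactly one of the events $\{0 \in \Conv(\m W_1, \epsilon_2 \m W_2, \ldots, \epsilon_d \m W_d)\}$ holds. Multiplying the identity $\sum_\epsilon \1_{0 \in \Conv(\m W_1, \epsilon_2 \m W_2, \ldots, \epsilon_d \m W_d)} = 1$ by $|\det(\m W_2, \ldots, \m W_d)|$ and taking expectation gives
\[
\E[|\det(\m W_2, \ldots, \m W_d)|\,\1_{0 \in \Conv(\m W_1, \ldots, \m W_d)}] = \frac{1}{2^{d-1}}\,\E[|\det(\m W_2, \ldots, \m W_d)|].
\]

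To close the argument I would compute the unconditional moment via the Gaussian representation $\m W_i = \m Z_i/\|\m Z_i\|$ with $\m Z_i$ iid standard normal in $\R^{d-1}$. The Bartlett decomposition of the Gaussian matrix yields $\E[|\det(\m Z_2, \ldots, \m Z_d)|] = 2^{(d-1)/2}\Gamma(d/2)/\sqrt{\pi}$, and dividing by $\E[\|\m Z\|]^{d-1} = (\sqrt{2}\,\Gamma(d/2)/\Gamma((d-1)/2))^{d-1}$ gives $\E[|\det(\m W_2, \ldots, \m W_d)|] = \Gamma((d-1)/2)^{d-1}/(\sqrt{\pi}\,\Gamma(d/2)^{d-2})$. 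Substituting everything into the factorization, the factors of $\Gamma((d-1)/2)$ cancel and one retrieves the explicit value of $C_d$ from \eqref{eq:value of Cd}. The main conceptual step is the sign-symmetry argument for the spherical factor; the rest of the proof consists of careful but routine manipulations of determinants and Beta-type integrals.
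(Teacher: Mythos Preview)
Your proof is correct and takes a genuinely different route from the paper's. Both start from the same cylindrical coordinates about $\m U_0$, but the paper then derives a \emph{recursion} $C_d = \frac{1}{2(d-1)}\big(B(\tfrac{d}{2},\tfrac12)/B(\tfrac{d-1}{2},\tfrac12)\big)^{d-1}C_{d-1}$ by partitioning the base simplex $\Conv(\sqrt{\tfrac{1-Z_j}{1+Z_j}}\,\m V_j)$ into sub-simplices pinned at $0$, and then uses a fairly intricate argument with spherical convex hulls (Figures~\ref{fig_SpC_u_SpC_3d}--\ref{fig_signed_sum}) to relate $\E[\Delta_{d-1}(0,\m V_0,\ldots,\m V_{d-2})\1_{0\in\Conv(\m V_0,\ldots,\m V_{d-1})}]$ to $C_{d-1}$; the base case $C_2$ is computed by hand. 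Your argument is instead \emph{direct}: after the cofactor expansion along the $e_d$-row, you observe that on the event $\{0\in\Conv(\m W_1,\ldots,\m W_d)\}$ the signs $(-1)^{d+i}\det(\m W_{\hat\imath})$ all agree (they are the barycentric coordinates of $0$), which lets you split the absolute value and fully factorize. Your decisive step---the sign-flip symmetry on $\m W_2,\ldots,\m W_d$ showing that the $2^{d-1}$ orthants generated by $(\pm\m W_2,\ldots,\pm\m W_d)$ partition $\R^{d-1}$, so the indicator averages to $2^{-(d-1)}$---is both shorter and more transparent than the paper's spherical-convex-hull manipulation, and it makes the appearance of the Wendel factor $2^{-(d-1)}$ immediate. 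The trade-off is that the paper's recursion is purely geometric and self-contained, whereas you import the Bartlett decomposition to evaluate the Gaussian determinant moment; on balance your approach is more streamlined, while the paper's is more visual and perhaps easier to adapt to the weighted setting of Section~\ref{sec_alpha_case} (though your method extends there as well, since the extra factors $\|\m U_i-\m U_0\|^\alpha=(2(1-\cos\theta_i))^{\alpha/2}$ depend only on the $\theta_i$).
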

	This computation is part of a long series of results concerning average volumes of simplices with random vertices. The first result that would not be limited to specific \(2\) or \(3\)-dimensional cases is due to Kingman (1969) \cite{Kingman}, who found the average volume \(\E\left[\Delta_d\left(\m P_0,...,\m P_d\right)\right]\) when \(\m P_0,...,\m P_d\) are i.i.d. uniform random points in the \(d\)-dimensional unit ball \(\Ball_{\R^d}\). This result was greatly improved by Miles (1971) \cite{Miles}, who found all moments of the volume \(\Delta_d\left(\m P_0,...,\m P_d\right)\) in the case where the points are independent and distributed uniformly either in the unit ball or on the unit sphere, see also \cite{Kabluchko} which collects new results extending \cite{Miles}.
% 	In both studies, the independence of the points plays a central role. 
    In our setting, the additional indicator function in the considered expectation in the left-hand side of \eqref{eq_C_d_as_expectation_intro} completely modifies the nature of the problem and consequently, leads us to finding a new method for deriving \eqref{eq_C_d_as_expectation_intro}.
	
	Finally, going back to the previous discussion related to the maximum among all cells in a large window of the maximal distance from their boundary to their nucleus, we use Theorem \ref{thm_main_result} to identify the underlying extremal index in Theorem \ref{thm_extremal_index} below.
	\begin{theorem}\label{thm_extremal_index}
	Consider the set of Poisson-Voronoi cells with nucleus included in $[0,n]^d$. When $n\to\infty$, the extremal index of the sequence of the maximal distances from their boundary to their nucleus is  equal to $\frac1{2d}$.
	\end{theorem}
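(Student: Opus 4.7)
The plan is to pin down the extremal index $\theta$, whose existence is guaranteed by \cite[Theorem 4]{ChenavierRobert}, by combining the sharp tail asymptotic of Theorem~\ref{thm_main_result} with the limit distribution of the maximum already established in \cite{CalkaChen14}.

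Let $M_n$ denote the maximum over all Voronoi cells with nucleus in $[0,n]^d$ of the maximal distance from a vertex to the nucleus. By the characterization of the extremal index developed in \cite{ChenavierRobert}, $\theta$ is determined by the identity
\begin{equation*}
\lim_{n\to\infty}\PR(M_n \leq u_n) = e^{-\theta \tau},
\end{equation*}
valid for every sequence $(u_n)$ such that the expected number of exceedances $E_n := \E\big[\Card\{X \in \Phi \cap [0,n]^d : \D(\C(X,\Phi)) > u_n\}\big]$ converges to some $\tau > 0$. By the Slivnyak--Mecke formula and stationarity, $E_n = n^d\, \PR(\D > u_n)$, so Theorem~\ref{thm_main_result} allows one to calibrate $u_n(\tau)$ so that $E_n \to \tau$; inverting the tail, $\kappa_d u_n(\tau)^d = d\log n + (d-1)\log\log n + O(1)$.

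To compute $\lim_n \PR(M_n \leq u_n(\tau))$ independently, I would invoke Calka--Chenavier's limit theorem. After rescaling space by a factor $n$, the law of $M_n$ coincides with that of $n\, R_{\mbox{\tiny{max}}}(n^d)$, where $R_{\mbox{\tiny{max}}}(\gamma)$ is the maximum studied in \cite{CalkaChen14} over cells in the unit cube for intensity $\gamma$. Reading off their Janson-type covering limit at the threshold $u_n(\tau)$ gives a second expression $\lim_n \PR(M_n \leq u_n(\tau)) = e^{-\sigma(\tau)}$ for an explicit $\sigma(\tau)$. Equating the two limits yields $\theta = \sigma(\tau)/\tau$.

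The last step is to simplify this ratio. After substituting the value of $C_d$ given in \eqref{eq:value of Cd} and applying the Legendre duplication formula $\Gamma(2z) = 2^{2z-1}\pi^{-1/2}\Gamma(z)\Gamma(z+\tfrac12)$ to the resulting half-integer Gamma values, the ratio collapses to $1/(2d)$, confirming the Chenavier--Robert conjecture. This final matching is the principal obstacle: neither the covering constant coming from \cite{CalkaChen14} nor $C_d$ is manifestly proportional to $1/(2d)$, and the clean cancellation requires careful bookkeeping of the Gamma factors. Heuristically, the factor $2d$ reflects the mean cluster size of exceedances and matches the picture of \cite[\S H]{Aldous}, where a cluster is organised around a nearly empty ball of radius $\sim t$ supporting $d+1$ boundary nuclei, with an additional geometric contribution from adjacent extreme-vertex configurations that only becomes transparent once the analytic constants have been fully reduced.
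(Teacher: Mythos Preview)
Your proposal is correct and follows essentially the same route as the paper: combine the Calka--Chenavier limit distribution for the maximum with the tail asymptotic of Theorem~\ref{thm_main_result} (existence of $\theta$ being supplied by Chenavier--Robert), and then read off $\theta$ as the ratio of the two resulting normalizing constants. One minor remark: the final simplification does not actually require the Legendre duplication formula---the functional equation $\Gamma(\tfrac{d}{2}+1)=\tfrac{d}{2}\Gamma(\tfrac{d}{2})$ alone is enough to collapse the ratio to $1/(2d)$.
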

	
	The key idea leading to Theorem \ref{thm_main_result} and consequently Theorem \ref{thm_extremal_index} consists in reducing the asymptotic estimate of the tail probability of \(\Rcirc\) to the expectation of the number of so-called \emph{pointy} vertices that are far away from the origin, where a \emph{pointy} vertex is a vertex of $\CO$ such that its distance to the origin is a local maximum among the points in the cell. Such a vertex can be proved to be unique with high probability. Our plan then consists in estimating that expectation and prove that the standard deviation is negligible. The method is robust enough to extend to a parametric model where the intensity measure of the underlying Poisson point process has a density proportional to a power of the distance from the origin.
	
	The paper is structured as follows. In Section \ref{sec_expectation_of_nr_of_some_distant_vertices}, we introduce the notion of \emph{pointy} vertex and calculate the expected number of such vertices up to a non-explicit constant which is interpreted as the mean volume of a random simplex, the same which appears in the left-hand side of \eqref{eq_C_d_as_expectation_intro}. In Section \ref{sec_expected_nr_pairs} we investigate the second moment of the number of pointy vertices notably through the use of a generalized Blaschke-Petkantschin-type change of variables formula. In Section \ref{sec_proof_of_theorem} we use these results to prove Theorem \ref{thm_main_result} up to the multiplicative constant \(C_d\) appearing on the right-hand side of \eqref{eq_C_d_as_expectation_intro}. Section \ref{sec_volume_kingman} is self-contained and is devoted to the proof of Theorem \ref{thm_value_of_C_d}, i.e. the calculation of the constant \(C_d\) which completes the proof of Theorem \ref{thm_main_result}. In Section \ref{sec_extrremal_index}, we prove Theorem \ref{thm_extremal_index} related to the calculation of the extremal index. Finally, in Section \ref{sec_alpha_case}, we extend our results to a parametric model which includes both the case of the typical Poisson-Voronoi and the case of the zero-cell of an isotropic and stationary Poisson hyperplane process.
	
	\section{Proof of Theorem \ref{thm_main_result} up to the multiplicative constant \(C_d\)}\label{sec_from_expectation_to_probability}
	\subsection{The expected number of far-off pointy  vertices}\label{sec_expectation_of_nr_of_some_distant_vertices}
	As hinted previously, the determination of the tail probability of $\Rcirc$ is strongly related to the expectation of the number of so-called \textit{pointy} vertices. A \textit{pointy} vertex is a vertex of $\CO$ which locally maximizes, inside $\CO$, the distance to the nucleus.
	Clearly, the variable $\Rcirc$ is the distance from the nucleus to one of the pointy vertices.
	We denote by  \(\VOM\) the set of all pointy vertices of \(\CO\) and \(\VOMR\) the set of far-off pointy vertices of \(\CO\) that are at distance greater than \(t\) from the origin,
	\begin{equation*}
		\VOMR = \lbrace c \in \VOM : \|c\| \geq t \rbrace.
	\end{equation*}
	Moreover, we denote by
	\begin{equation*}
		(\VOMR)^2_{\neq} = \lbrace (c,c') \in (\VOMR)^2 : c \neq c' \rbrace
	\end{equation*}
	the set of all pairs of distinct vertices of \(\CO\) that are pointy and at distance greater than \( t\) from the nucleus \(0\). We then have the equality of events 
	\begin{equation*}
		\lbrace\Rcirc\geq t\rbrace = \lbrace\Card \left(\VOMR\right) \geq 1\rbrace
	\end{equation*}
	where \(\Card(\cdot)\) denotes the cardinality of the specified set. We obtain the asymptotic behavior of \(\PR(\Rcirc\geq t)=\PR(\Card \left(\VOMR\right) \geq 1)\) from that of  \(\E[\Card(\VOMR)]\), which we compute in Sections \ref{sec_expectation_of_nr_of_some_distant_vertices} and \ref{sec_volume_kingman}.  Indeed, in Section \ref{sec_expected_nr_pairs}, we show that the expected number of distinct pairs of such vertices is negligible with respect to \(\E[\Card\VOMR]\) as \(t\rightarrow\infty\) and this implies that, asymptotically for \(t\rightarrow\infty\), if a pointy vertex at distance \(\geq t\) from \(0\) exists, it is unique. For this reason, the asymptotic behavior of \(\Rcirc\) corresponds to that of \(\E[\Card(\VOMR)]\): the details are presented in Section \ref{sec_proof_of_theorem}.
	
	Local maximality is a property describing the local shape of the cell around a vertex. Lemma \ref{prop_equivalent_characterization_of_VOM} translates it into a condition on the points of the Poisson point process that determine the said vertex.
	
	\begin{lemma}\label{prop_equivalent_characterization_of_VOM}
		Let \(c\in\VO\) be a vertex of \(\CO\) determined by the points 0 and \(\m X_1,...,\m X_d\) of \(\Phi\), and let \(\Urm_0,...,\Urm_d \in\Sp_{\R^d}\), where \(\Sp_{\R^d}\) denotes the \((d-1)\)-dimensional unit sphere, be the unit vectors
		\begin{equation*}
			\Urm_0 = \frac{c}{\|c\|} \; , \; \Urm_i = \frac{\m X_i-c}{\|\m X_i-c\|}.
		\end{equation*}
		Then the vertex \(c\) is pointy if and only if
		\begin{equation*}
			0 \in \Conv\left(P_{\Urm_0^\perp}(\Urm_1),...,P_{\Urm_0^\perp}(\Urm_d)\right)
		\end{equation*}
		where by \(\Proj_{\m u^\perp}\) we denote the orthogonal projection onto \(\m u^\perp\), the \((d-1)\)-dimensional linear subspace orthogonal to \(u\in\Sp_{\R^d}\), see Figure \ref{fig_proj_refl}, and where \(\Conv\) denotes the relative interior of the convex hull of the specified points.
	\end{lemma}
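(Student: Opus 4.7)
My plan is to interpret the local maximality of $y \mapsto \|y\|$ at $c$ within $\CO$ as a first-order optimality condition, translate it via Farkas' lemma into a conic combination condition on $c$ in terms of its neighbors, and then rewrite using the equidistance relations $\|\m X_i-c\|=\|c\|$ to recover the claimed convex-hull characterization.

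First, I describe $\CO$ locally near $c$. Since $c$ is a vertex of $\CO$ determined by $0$ and $\m X_1,\ldots,\m X_d$, the only active constraints at $c$ are the perpendicular bisectors of $\{0,\m X_i\}$, whose outward normals are the vectors $\m X_i$. The feasible cone at $c$ inside $\CO$ is therefore
\begin{equation*}
F_c = \{v \in \R^d : \langle v, \m X_i\rangle \leq 0,\ i=1,\ldots,d\}.
\end{equation*}
Since $\|c+v\| - \|c\| = \langle v, \Urm_0\rangle + O(\|v\|^2)$, the vertex $c$ is pointy if and only if $\langle v, c\rangle \leq 0$ for every $v \in F_c$. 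By Farkas' lemma this is equivalent to $c \in \mathrm{cone}(\m X_1,\ldots,\m X_d)$, i.e., to the existence of $\lambda_1,\ldots,\lambda_d \geq 0$ with $c = \sum_i \lambda_i \m X_i$.

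Next, I exploit the fact that $c$ is the circumcenter of $\{0,\m X_1,\ldots,\m X_d\}$. Writing $\m X_i = \|c\|(\Urm_0 + \Urm_i)$ and $c = \|c\|\Urm_0$, the identity $c = \sum \lambda_i \m X_i$ becomes $(1-\Lambda)\Urm_0 = \sum_i \lambda_i \Urm_i$ with $\Lambda := \sum_i \lambda_i$. Projecting onto $\Urm_0^\perp$ annihilates the left-hand side and yields
\begin{equation*}
0 = \sum_{i=1}^d \lambda_i \, \Proj_{\Urm_0^\perp}(\Urm_i).
\end{equation*}
Normalizing by $\Lambda>0$ (which is ensured by $c\neq 0$ together with the $\Urm_i$ being unit vectors) expresses $0$ as a convex combination of the projected unit vectors $\Proj_{\Urm_0^\perp}(\Urm_i)$. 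Conversely, given $0 = \sum \mu_i \Proj_{\Urm_0^\perp}(\Urm_i)$ with $\mu_i > 0$ and $\sum \mu_i = 1$, a direct computation yields $\sum \mu_i \m X_i = (1+\alpha)c$ with $\alpha = \sum \mu_i \langle \Urm_i, \Urm_0\rangle > -1$ (the strict inequality coming from $\Urm_i \neq -\Urm_0$, the equality case corresponding to $\m X_i = 0$), which exhibits $c$ as a positive combination of the $\m X_i$.

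The one point that requires care is the relative-interior distinction: local maximality formally gives $c$ in the \emph{closed} conic hull, whereas the condition ``$0 \in \Conv(\cdot)$'' as defined here (relative interior) corresponds to strictly positive coefficients. This is the main obstacle to a clean statement; almost surely for the underlying Poisson configuration, $c$ lies in the open interior of $\mathrm{cone}(\m X_1,\ldots,\m X_d)$ whenever it lies in the cone at all, since landing on a proper face of this cone is a codimension-one event and hence negligible. Modulo this standard measure-theoretic reduction, the equivalence follows from the convex-duality chain above.
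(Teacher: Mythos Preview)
Your approach via Farkas' lemma is correct and genuinely different from the paper's route. The paper argues geometrically through an intermediate equivalence: it shows that ``pointy'' is equivalent to ``no closed half-sphere through $\Urm_0$ contains all the $\Urm_i$'', and then relates that half-sphere condition to the projected convex-hull condition by looking at supporting hyperplanes in $\Urm_0^\perp$. Your argument bypasses this: you translate local maximality into a polar-cone condition on the feasible cone $F_c$, invoke Farkas to get $c\in\mathrm{cone}(\m X_1,\ldots,\m X_d)$, and then use the circumcenter relation $\m X_i=\|c\|(\Urm_0+\Urm_i)$ to project down. This is more algebraic and arguably cleaner; the paper's version has the advantage of giving a vivid geometric picture (half-spheres, Figure~\ref{fig_B_H_calka_s_charact_of_condition}) that is reused informally later when discussing pairs of pointy vertices.

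One point deserves tightening. You write that $c$ is pointy iff $\langle v,c\rangle\le 0$ for all $v\in F_c$, and then invoke an almost-sure argument to pass from the closed cone to its interior. In fact the strict inequality is what local maximality gives you directly: since $\|c+tv\|^2=\|c\|^2+2t\langle v,c\rangle+t^2\|v\|^2$, any nonzero $v\in F_c$ with $\langle v,c\rangle=0$ produces $\|c+tv\|>\|c\|$ for small $t>0$, so $c$ would \emph{not} be a local maximum. Hence ``pointy'' is exactly $\langle v,c\rangle<0$ for all $v\in F_c\setminus\{0\}$, which (because $\m X_1,\ldots,\m X_d$ are linearly independent and the cone is simplicial) is equivalent to $c\in\mathrm{int}\,\mathrm{cone}(\m X_1,\ldots,\m X_d)$, i.e.\ to strictly positive coefficients $\lambda_i$. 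This lands you directly in the relative interior $\Conv(P_{\Urm_0^\perp}(\Urm_1),\ldots,P_{\Urm_0^\perp}(\Urm_d))$ without any measure-theoretic patch. So your final paragraph can simply be deleted: the equivalence is exact, not merely almost sure.
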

	\begin{figure}[h!]
		\centering
			\includegraphics[scale=1]{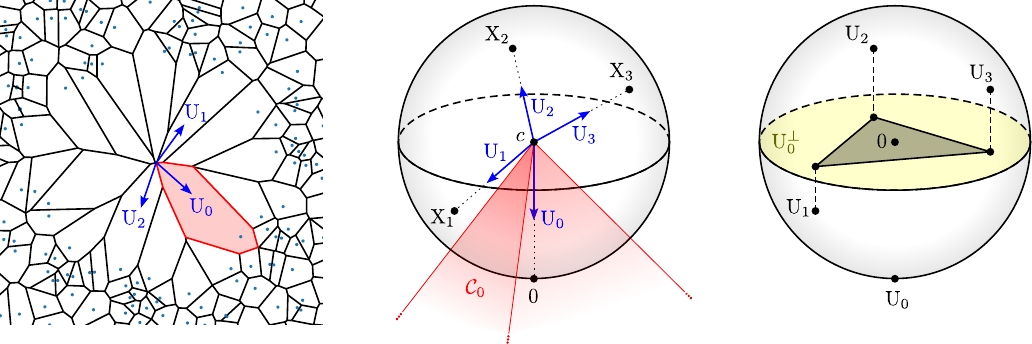}
		\caption{The left-hand side picture is a cluster of big cells in the \(2\)-dimensional case containing at least one with maximal nucleus-vertex distance \(\geq 5\) obtained by emptying a ball of radius \(5\) in the spirit of \cite[Chapter H1]{Aldous}. In the center, a pointy vertex \(c\) of \(\CO\) in the \(3\)-dimensional case and the points \(\m X_1, \m X_2, \m X_3\) that together with \(0\) determine it. On the right-hand side, the shaded triangle is the set \(\Conv(P_{\m U_0^\perp}(\m U_1), P_{\m U_0^\perp}(\m U_2), P_{\m U_0^\perp}(\m U_3))\) obtained from the same set of points: it contains the origin, showing that condition of Lemma \ref{prop_equivalent_characterization_of_VOM} is satisfied.}
		\label{fig_proj_refl}
	\end{figure}
	
	\begin{proof}
		We show the equivalence between the three following statements:
		\begin{enumerate}
			\item the vertex \(c\) is pointy,
			\item no (closed) half sphere whose base is a great circle passing by \(\Urm_0\) contains all \(\Urm_1,...,\Urm_d\),
			\item \(0 \in \Conv\left(P_{\Urm_0^\perp}(\Urm_1),...,P_{\Urm_0^\perp}(\Urm_d)\right)\)
		\end{enumerate}
		Let us denote the cone delimited by the bisecting hyperplanes around a vertex \(c\) as
		\begin{equation*}
			\mathcal{K}(c) = \lbrace e\in\R^d : \langle e, \Urm_i \rangle \leq \langle e, \Urm_0 \rangle \, \forall \, i\in\lbrace1,...,d\rbrace \rbrace.
		\end{equation*}
		A vertex \(c\) is pointy whenever \(\CO\) does not cross the affine hyperplane passing by \(c\) and orthogonal to \(\Urm_0\), or in terms of the cone, if
		\begin{equation}\label{eq_condition_loc_max_wrt_cone}
			\forall \, e \in \mathcal{K}(c) : \langle e, \Urm_0 \rangle > 0 .
		\end{equation}
		We can now show the equivalence between the two assertions 1. and 2. A vertex \(c\in\VO\) is not pointy when there exists a non-zero element \(e\in\mathcal{K}(c)\setminus\lbrace0\rbrace\) satisfying \(\langle e, \Urm_0 \rangle=0\). In particular this element satisfies
		\begin{equation*}
			\forall i\in\lbrace1,..., d \rbrace : \langle e,\Urm_i \rangle \leq \langle e,\Urm_0 \rangle = 0.
		\end{equation*}
		By the inequality above, the hyperplane \(e^\perp\) supports the base of a closed half-sphere containing \(\Urm_1,...,\Urm_d\) and having \(\Urm_0\) on its boundary. Conversely, to a closed half-sphere containing \(\Urm_1,...,\Urm_d\) and having \(\Urm_0\) on its boundary, we associate a vector \(e\) normal to the hyperplane supporting its base and pointing outwards with respect to the half-sphere. This vector satisfies \(\langle e, \Urm_0\rangle=0\) and \(\langle e,\Urm_i \rangle \leq 0\): it is an element for which condition (\ref{eq_condition_loc_max_wrt_cone}) fails.
		
		We finally show the equivalence between assertions 2. and 3.
		Indeed, the origin \(0\) of \(\Urm_0^{\perp}\) is not in \(\Conv(P_{\Urm_0^{\perp}}(\Urm_1),...,P_{\Urm_0^{\perp}}(\Urm_d))\) if and only if there exists a  \((d-2)\)-dimensional linear subspace \(F\subset \Urm_0^{\perp}\) which supports the basis of a half-ball \(B\) of \(\Urm_0^{\perp}\) containing \(P_{\Urm_0^\perp}(\Urm_1),...,P_{\Urm_0^{\perp}}(\Urm_d)\). This means that there exists a linear hyperplane generated by  \(F\cup\{\Urm_0\}\) which supports the basis of a half-sphere \(H\) containing \(\Urm_1,...,\Urm_d\), see Figure \ref{fig_B_H_calka_s_charact_of_condition}.
	\end{proof}
	
	\begin{figure}[h!]
		\centering
		\includegraphics[scale=0.9]{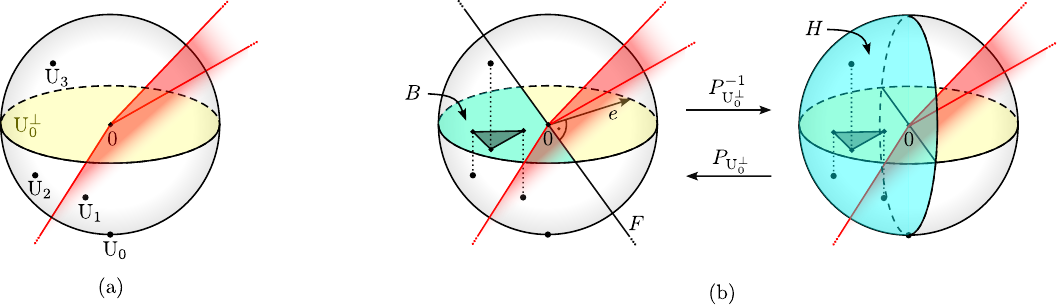}
		\caption{(a) Shaded in red, the local shape of the Voronoi cell associated to \(\Urm_0\) around a non pointy vertex generated by points \(\m U_0, \m U_1 , \m U_2, \m U_3 \). (b) In blue, the sets \(B\) and \(H\)	 together with the subspace \(F\) and the unit vector \(e\) in the context of the proof of Lemma \ref{prop_equivalent_characterization_of_VOM}.}
		\label{fig_B_H_calka_s_charact_of_condition}
	\end{figure}
	
	We are now ready to compute \(\E[\Card(\VOMR)]\), however up to an unknown multiplicative constant \(C_d\). This constant is computed in Section \ref{sec_volume_kingman}.
	
	\begin{proposition}\label{thm_E_nr_locmaxvert}
		For every $t>0$, we have
		\begin{equation}\label{eq_prop_expectation_nr_locmax}
			\E[\Card(\VOMR)] = (d\kappa_d)^d C_d \sum_{i=0}^{d-1} t^{di} \kappa_d^{i-d+1} \frac{(d-1)!}{i!} e^{-\kappa_d t^d} 
		\end{equation}
		where \(\kappa_d\) is given at \eqref{eq:defkappad}. Therefore,
		\begin{equation}\label{eq_prop_expectation_nr_locmax_asymptotic}
			\E[\Card(\VOMR)] \underset{t\rightarrow\infty}{\sim} (d\kappa_d)^d C_d t^{d(d-1)} e^{-\kappa_d t^d}
		\end{equation}
		where, denoting by \(\m U_0,...,\m U_d\) $(d+1)$ i.i.d. uniform random variables on the \((d-1)\)-dimensional unit sphere \(\Sp_{\R^d}\) and by \(\Delta_{d}(\m U_0,...,\m U_d) = \Vol_d(\Conv(\m U_0,...,\m U_d))\) the \(d\)-dimensional Lebesgue measure of the \(d\)-dimensional simplex having these random points as vertices,
		\begin{equation}\label{eq_C_d_as_expectation}
			C_d = \E\left[ \Delta_{d}(\m U_0,...,\m U_d)  \1_{0\in\Conv(P_{\m U_0^\perp}(\m U_1),... ,P_{\m U_0^\perp}(\m U_d))} \right].
		\end{equation}
	\end{proposition}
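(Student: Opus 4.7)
My plan is to compute $\E[\Card(\VOMR)]$ by the Mecke formula, apply a Blaschke--Petkantschin change of variables adapted to the fact that the circumsphere passes through the origin, and evaluate the resulting angular and radial integrals separately.

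Every pointy vertex of $\CO$ lying at distance $\geq t$ from $0$ is determined by a unique unordered $d$-subset of $\Phi$. Mecke's formula, combined with the Poisson void probability $e^{-\kappa_d \|c\|^d}$ on the open ball whose boundary contains $0,\m X_1,\ldots,\m X_d$, gives
\[
\E[\Card(\VOMR)] \;=\; \frac{1}{d!}\int_{(\R^d)^d} \mathbf{1}\{\|c\|\geq t, \, c \text{ is pointy}\}\,e^{-\kappa_d \|c\|^d}\,d\m x_1\cdots d\m x_d,
\]
where $c = c(0,\m x_1,\ldots,\m x_d)$ is the circumcentre, and the pointy condition is translated via Lemma~\ref{prop_equivalent_characterization_of_VOM} into $0\in\Conv(P_{\m U_0^\perp}(\m U_1),\ldots,P_{\m U_0^\perp}(\m U_d))$ with $\m U_0 = c/\|c\|$ and $\m U_i = (\m x_i-c)/\|c\|$.

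The next step is to change variables from $(\m x_1,\ldots,\m x_d)$ to $(c,\m U_1,\ldots,\m U_d) \in \R^d \times \Sp_{\R^d}^d$. I apply the Blaschke--Petkantschin formula for $d+1$ points on a common sphere of centre $c$ and radius $r$, which contributes $d!\,\Delta_d(\m y_0,\ldots,\m y_d)\,r^{d^2-d-1}$. The constraint $\m y_0 = 0$ is absorbed using $d\m y_0 = r^{d-1}\,dr\,d\sigma(\m v_0)$ at fixed $c$, forcing $r = \|c\|$ and $\m v_0 = -\m U_0$; and the rescaling $\Delta_d(0,\m x_1,\ldots,\m x_d) = \|c\|^d \Delta_d(-\m U_0,\m U_1,\ldots,\m U_d)$ (translate by $-c$, scale by $1/\|c\|$) produces
\[
d\m x_1\cdots d\m x_d \;=\; d!\,\|c\|^{d(d-1)}\,\Delta_d(-\m U_0,\m U_1,\ldots,\m U_d)\,dc\,d\sigma(\m U_1)\cdots d\sigma(\m U_d).
\]
Switching to spherical coordinates $dc = r^{d-1} dr\,d\sigma(\m U_0)$ promotes $\m U_0$ to an integration variable and yields the radial weight $r^{d^2-1} e^{-\kappa_d r^d}$. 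By rotational invariance together with the antipodal symmetry $\m U_0 \leftrightarrow -\m U_0$ (both $\m U_0^\perp$ and the simplex are invariant), the angular integral over $\Sp_{\R^d}^{d+1}$ equals $(d\kappa_d)^{d+1}\,C_d$ by definition \eqref{eq_C_d_as_expectation}. The substitution $u = \kappa_d r^d$ converts the radial integral into the upper incomplete gamma
\[
\int_t^\infty r^{d^2-1} e^{-\kappa_d r^d}\,dr \;=\; \frac{\Gamma(d,\kappa_d t^d)}{d\,\kappa_d^d} \;=\; \frac{(d-1)!}{d\,\kappa_d^d}\,e^{-\kappa_d t^d}\sum_{i=0}^{d-1}\frac{(\kappa_d t^d)^i}{i!},
\]
which, multiplied by the angular prefactor and simplified, gives exactly \eqref{eq_prop_expectation_nr_locmax}; the asymptotic \eqref{eq_prop_expectation_nr_locmax_asymptotic} is the $i=d-1$ dominant term.

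The main obstacle is the Blaschke--Petkantschin bookkeeping with one vertex pinned at the origin: one must correctly combine the BP Jacobian (with exponent $r^{d^2-d-1}$), the factor $r^{-(d-1)}$ absorbing $\delta(\m y_0)$, and the $\|c\|^d$ rescaling of the simplex, in order to end up with the net exponent $\|c\|^{d(d-1)}$ that precisely matches the heuristic ``$d$ points on a sphere of radius $\|c\|$''. A reliable consistency check is a direct computation of the Jacobian of the map $(c,\m U_1,\ldots,\m U_d) \mapsto (c+\|c\|\m U_1,\ldots,c+\|c\|\m U_d)$ via the orthogonal decomposition $dc = dc_\parallel\,\m U_0 + dc_\perp$; both approaches agree on the exponent and on the appearance of $\Delta_d(-\m U_0,\m U_1,\ldots,\m U_d)$, so that the final integral factorises as claimed.
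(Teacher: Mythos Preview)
Your proposal is correct and follows essentially the same route as the paper: Mecke formula, the spherical Blaschke--Petkantschin change of variables for $d$ points cospherical with the origin, and factorisation into a radial incomplete-gamma integral times the angular expectation defining $C_d$. The only cosmetic difference is that the paper cites the ready-made formula $d\m x_1\cdots d\m x_d = d!\,\Delta_d(\m u_0,\ldots,\m u_d)\,\m r^{d^2-1}\,d\m r\,d\m u_0\cdots d\m u_d$ directly (with $\m u_0$ pointing from the centre to the origin), whereas you first pass through $(c,\m U_1,\ldots,\m U_d)$ and then polarise $c$; after your antipodal substitution $\m U_0\leftrightarrow -\m U_0$ the two computations coincide line by line.
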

	
	\begin{proof}
		We start by expressing the quantity \(\Card(\VOMR)\) in relation to the set of points of \(\Phi\) that determine the vertices, preparing the ground for the multivariate Mecke formula \cite[Theorem 4.4]{LastPen}. Explicitly,
		\begin{equation}\label{eq_Card_VOMR_as_sum}
			\Card(\VOMR) = \frac{1}{d!} \sum\limits_{(\m X_1,...,\m X_d)\in\Phi^d_{\neq}} \1_{\Center(0,\m X_1,...,\m X_d)\in\VOMR}
		\end{equation}
		where \(\Center(\cdot)\) denotes the center of the sphere passing by the specified points, \(\Phi^d_{\neq}\) denotes the set of all \(d\)-tuples of pairwise distinct points of \(\Phi\) and the division by \(d!\) compensates for the over representation of vertices given by the permutation of the points \(\m X_1,...,\m X_d\) within themselves. By taking the expectation in (\ref{eq_Card_VOMR_as_sum}) and applying the Mecke formula, we have
		\begin{align}\label{eq_expectation_nr_maxvert_up_to_MS}
			\E\left[\Card(\VOMR)\right] & = \E\left[\frac{1}{d!}\sum_{(\m X_1,...,\m X_d)\in\Phi^d_{\neq}}\1_{\Center(0,\m X_1,...,\m X_d)\in\VOMR}\right] \nonumber
			\\
			& = \frac{1}{d!} \intlim_{(\R^d)^d} d\m x_1... \m x_d \, \E\left[\1_{\text{Center}(0,\m x_1,...,\m x_d)\in\VOMR(\Phi\cup\lbrace0,\m x_1,...,\m x_d\rbrace)}\right].
		\end{align}
		To compute the integral in the last line of (\ref{eq_expectation_nr_maxvert_up_to_MS}) we do a spherical Blaschke-Petkantschin change of variables, see e.g. [\cite{Moller}, Proposition 2.2.3]. As represented in Figure \ref{fig_BP_spherical_classical}, there is a unique \((d-1)\)-dimensional sphere passing by \(0\) and \(\m x_1,...,\m x_d\). We denote by \(\m r\) its radius and by \(\m u_0,\m u_1,...,\m u_d\) the unit vectors pointing from the center of the said sphere towards the points \(0,\m x_1,...,\m x_d\) respectively. Then for \(i\in\lbrace1,...,d\rbrace\),
		\begin{equation}\label{eq_coord_chg_BP}
			\m x_i = \m r(\m u_i-\m u_0)
		\end{equation}
		and for all non-negative functions,
		\begin{equation}\label{eq_coord_chg_BP_Jacobian}
			\int\limits_{(\R^d)^d} d\m x_1... d\m x_d\, f(\m x_1,...,\m x_d) = \int\limits_{0}^{\infty} d\m r \int\limits_{(\Sp_{\R^d})^{d+1}} d\m u_0... d\m u_d \, d! \Delta_{d}(\m u_0,...,\m u_d) \m r^{d^2-1} f(\m r(\m u_1-\m u_0),...,\m r(\m u_d-\m u_0))
		\end{equation}
		where \(d\m u_0,...,d\m u_d\) denotes the standard surface measure on the \((d-1)\)-dimensional unit sphere \(\Sp_{\R^d}\).
		\begin{figure}[h!]
			\centering
			\includegraphics[scale=1]{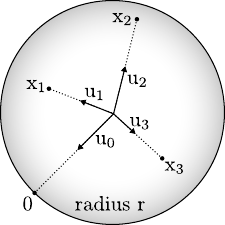}
			\caption{Representation of the classical Blaschke-Petkantschin spherical variable change \((\m x_1,...,\m x_d) \leftrightarrow (\m r,\m u_0,...,\m u_d)\).}
			\label{fig_BP_spherical_classical}
		\end{figure}
		
		We now apply (\ref{eq_coord_chg_BP_Jacobian}) to (\ref{eq_expectation_nr_maxvert_up_to_MS}). Using the identity \(\Center(0,\m x_1,...,\m x_d) = -\m r\m u_0\) and simplifying the factor \(\frac{1}{d!}\) with the factor \(d!\) arising from the Blaschke-Petkantschin coordinate change, we get
		\begin{equation}\label{eq_expectation_after_MS_and_BP}
			\E[\Card(\VOMR)] = \int\limits_0^\infty d\m r \intlim_{(\Sp_{\R^d})^{d+1}} d\m u_0... d\m u_d \, \m r^{d^2-1} \Delta_{d}(\m u_0,...,\m u_d) \E\left[\1_{-\m r\m u_0\in\VOMR(\Phi\cup\lbrace0,\m r(\m u_1-\m u_0),...,\m r(\m u_d-\m u_0)\rbrace)}\right].
		\end{equation}
		We focus now on the calculation of the expectation appearing in the right-hand side of \eqref{eq_expectation_after_MS_and_BP}. The event \( \{-\m r\m u_0\in\VOMR(\Phi\cup\lbrace0,\m r(\m u_1-\m u_0),...,\m r(\m u_d-\m u_0)\rbrace)\} \) occurs if and only if three conditions are satisfied simultaneously. The first two conditions depend deterministically on the variables of integration \(\m r\) and \(\m u_0,...,\m u_d\), i.e. the radius \(\m r\) must be greater than \(t\) and the unit vectors \(\m u_0,...,\m u_d\) must satisfy the geometric condition of Lemma \ref{prop_equivalent_characterization_of_VOM}.
		Once \(\m r\) and \(\m u_0,...,\m u_d\) are fixed, the third condition involves the random point process \(\Phi\): for \(-\m r\m u_0\) to be a vertex of the Voronoi tessellation generated by \(\Phi\cup\{0,\m r(\m u_1-\m u_0),...,\m r(\m u_d-\m u_0)\}\), no point of \(\Phi\) must fall inside of the ball  \(\Ball(0,\m x_1,...,\m x_d)\), where \(\Ball(\cdot)\) denotes the ball whose boundary contains the specified points. With these considerations,
		\begin{equation}\label{eq_01}
		    \1_{-\m r\m u_0\in\VOMR(\Phi\cup\lbrace0,\m r(\m u_1-\m u_0),...,\m r(\m u_d-\m u_0)\rbrace)} = \1_{\m r\geq t}\, \1_{0\in\Conv(P_{\m u_0^\perp}(\m u_1),...,P_{\m u_0^\perp}(\m u_d))}\, \1_{\Phi\cap \Ball(0,\m x_1,...,\m x_d)=\emptyset}.
		\end{equation}
		Taking the expectation of equation \eqref{eq_01}, we obtain
		\begin{align}\label{eq_set_decomposition_of_VOMR_after_MS}
			\E \left[ \1_{-\m r\m u_0\in\VOMR(\Phi\cup\lbrace0,\m r(\m u_1-\m u_0),...,\m r(\m u_d-\m u_0)\rbrace)} \right] 
			& =  \1_{\m r\geq t}\, \1_{0\in\Conv(P_{\m u_0^\perp}(\m u_1),...,P_{\m u_0^\perp}(\m u_d))} e^{-\kappa_d\m r^d}.
		\end{align}
		We insert this last identity \eqref{eq_set_decomposition_of_VOMR_after_MS} into \eqref{eq_expectation_nr_maxvert_up_to_MS}. By Fubini, the integral splits and we obtain
		\begin{align}\label{eq_label}
			& \E[\Card(\VOMR)] \nonumber
			\\
			& = \int\limits_0^\infty d\m r \int\limits_{(\Sp_{\R^d})^{d+1}} d\m u_0... d\m u_d \, \m r^{d^2-1} \Delta_{d}(\m u_0,...,\m u_d) e^{-\kappa_d\m r^d} \1_{\m r\geq t} \, \1_{0\in\Conv(P_{\m u_0^\perp}(\m u_1),...,P_{\m u_0^\perp}(\m u_d))} \nonumber
			\\
			& = \left( \int\limits_{(\Sp_{\R^d})^{d+1}} d\m u_0... d\m u_d \, \Delta_{d}(\m u_0,...,\m u_d) \1_{\{0\in\Conv(P_{\m u_0^\perp}(\m u_1),...,P_{\m u_0^\perp}(\m u_d))\}} \right) \left(\int\limits_0^\infty d\m r \, \m r^{d^2-1} e^{-\kappa_d\m r^d} \1_{\{\m r\geq t\}}\right) \nonumber 
			\\
			& = \left( \int\limits_{(\Sp_{\R^d})^{d+1}} d\m u_0... d\m u_d \, \Delta_{d}(\m u_0,...,\m u_d) \1_{\{0\in\Conv(P_{\m u_0^\perp}(\m u_1),...,P_{\m u_0^\perp}(\m u_d))\}} \right) \left(\frac{1}{d\kappa_d} \sum_{i=0}^{d-1} t^{di} \kappa_d^{i-d+1} \frac{(d-1)!}{i!} e^{-\kappa_d t^d} \right) 
		\end{align}
		where the integral with respect to \(\m r\) has been computed by multiple integration by parts. Up to renormalization by \(\frac{1}{(d\kappa_d)^{d+1}}\) (\(d\kappa_d\) is the surface volume of the \((d-1)\)-dimensional sphere \(\Sp_{\R^d}\)), the integral with respect to \(\m u_0,...,\m u_d\) in (\ref{eq_label}) is \(C_d\):
		\begin{equation}\label{eq_Cecilia_label2}	\int\limits_{(\Sp_{\R^d})^{d+1}} d\m u_0... d\m u_d \, \Delta_{d}(\m u_0,...,\m u_d) \1_{0\in\Conv(P_{\m u_0^\perp}(\m u_1),...,P_{\m u_0^\perp}(\m u_d))} = (d\kappa_d)^{d+1} C_d.
		\end{equation}
		We combine (\ref{eq_Cecilia_label2}) and (\ref{eq_label}) to find the first statement of Proposition \ref{thm_E_nr_locmaxvert}, i.e. equation (\ref{eq_prop_expectation_nr_locmax}). Then, equation (\ref{eq_prop_expectation_nr_locmax_asymptotic}) follows directly.
	\end{proof}
	
	\subsection{The expectation of the number of pairs of far-off pointy vertices}\label{sec_expected_nr_pairs}
	
	In this section we study \((\VOMR)^2_{\neq}\), the set of all pairs of distinct pointy vertices of \(\CO\) at distance \(\geq t\) from \(0\). In Proposition \ref{thm_E_pairs_negligible}, we show that its expectation \(\E\left[\Card\left((\VOMR)^2_{\neq}\right)\right]\) is negligible with respect to \(\E[\Card(\VOMR)]\) as \(t\rightarrow\infty\). This key result allows us, in Section \ref{sec_proof_of_theorem}, to prove Theorem \ref{thm_main_result} up to the multiplicative constant \(C_d\), i.e. that the tail probability \(\PR(\Rcirc\geq t)\) asymptotically behaves like \(\E[\Card(\VOMR)]\).
	
	We explain below why we expect the number of couples of distinct far-off pointy vertices to be negligible in mean in front of the number of far-off pointy vertices. For a vertex \(c\) of \(\CO\) at distance \(\geq t\) from \(0\) to exist, \(\Phi\) must avoid the interior of the ball \(\Ball=\Ball(0,\m X_1,...,\m X_d)\) of radius \(\geq t\), where \(\m X_1,...,\m X_d\in\Phi\) are the nuclei that together with \(0\) determine \(c\). Moreover, for \(c\) to be pointy, the nuclei \(\m X_1,...,\m X_d\) must be scattered on \(\partial\Ball\) according to the criterion discussed in the proof of Lemma \ref{prop_equivalent_characterization_of_VOM}: that is, no hemisphere of \(\Ball\) whose boundary passes by \(0\) contains all of them. Conditional on the existence of \(c\in\VOMR\), if there is a second pointy vertex \(c'\in\VOMR\), it will most likely be close to \(c\). That is because an additional ball \(\Ball'\), centered in \(c'\) and passing by \(0\), must be avoided by \(\Phi\). When \(t\) is large, this is very unlikely, unless \(\Ball\) and \(\Ball'\) overlap extensively. This forced proximity adds a significant constraint on the distribution of the nuclei \(\m X_1,...,\m X_d\) on the surface \(\partial\Ball\) and a similar constraint on the additional nuclei along $\partial\Ball'$ which determine $c'$. As the interior of \(\Ball'\) must be avoided by \(\Phi\), \(\m X_1,...,\m X_d\) must be distributed on \(\partial\Ball\setminus\Ball'\), which is a region just slightly larger than an hemisphere. At the same time and most importantly, for \(c\) to be pointy, enough points within \(\m X_1,...,\m X_d\) must be located close to the interface \(\partial\Ball\cap\partial\Ball'\) so to avoid the existence of an hemisphere of \(\partial\Ball\) containing them all. Figure \ref{fig_scenarios_close} represents an (unlikely) realization of a pair of pointy vertices in the \(2\)-dimensional case. This is formalized in Proposition \ref{thm_E_pairs_negligible} below. 
	
	\begin{figure}[h!]
		\centering
		\includegraphics[scale=1]{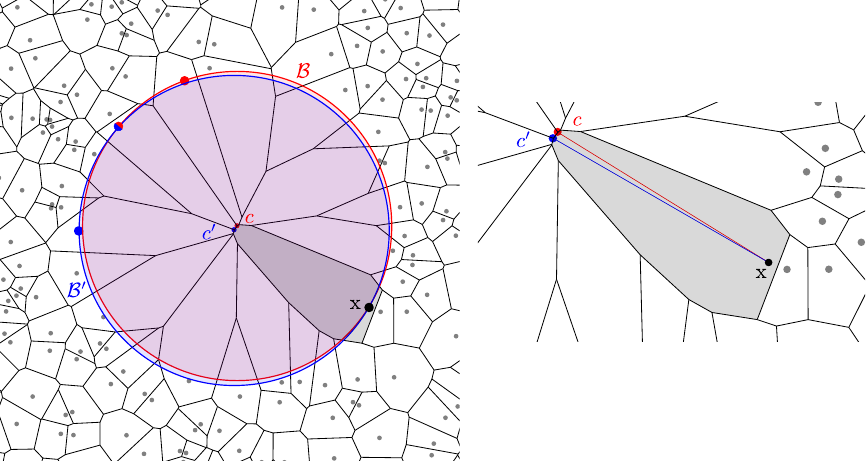}
		\caption{Simulation of a cell with nucleus \(\m x\), shaded in gray, featuring a large maximal nucleus-vertex distance and two pointy vertices \(c\) and \(c'\) located at the center of respectively \(\Ball\) and \(\Ball'\). In this realization, \(c\) and \(c'\) share an edge, and therefore are determined by two common points of the Poisson point process: \(\m x\) and the point colored both in red and in blue.}
		\label{fig_scenarios_close}
	\end{figure}
	
	\begin{proposition}\label{thm_E_pairs_negligible}
		As \(t\rightarrow\infty\), 
		\begin{equation*}
			\E\left[\Card \left( \left(\VOMR\right)^2_{\neq} \right) \right] 
			= \mathcal{O}\left( t^{d(d-2)} e^{-\kappa_dt^d} \right)
			= o\left( \E\left[\Card (\VOMR) \right] \right).
		\end{equation*}
	\end{proposition}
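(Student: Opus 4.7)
The strategy is to partition pairs according to the number $k\in\{0,1,\ldots,d-1\}$ of Poisson nuclei shared by the two circumscribed sets, writing $\Card((\VOMR)^2_{\neq}) = \sum_{k=0}^{d-1} N_k$ and bounding each $\E[N_k]$ separately. For each $k$, the multivariate Mecke formula \cite[Theorem 4.4]{LastPen} expresses $\E[N_k]$ as an integral over $(\R^d)^{2d-k}$ of the product of three factors: the void probability $e^{-\Vol(\Ball\cup\Ball')}$ for the Poisson process to avoid both circumscribed balls $\Ball$ and $\Ball'$; the indicators that each extracted Mecke point lies outside the opposite circumscribed ball (so that augmenting $\Phi$ with these points does not violate the emptiness of $\Ball\cup\Ball'$); and the pointiness and far-off indicators of both vertices. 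A generalized Blaschke--Petkantschin-type change of variables, adapted to the $k$ shared nuclei (which together with the origin span a fixed sub-sphere common to $\partial\Ball$ and $\partial\Ball'$), recasts the integrand in terms of the two radii $r,r'\geq t$, the central directions $\m u_0=-c/r$ and $\m u_0'=-c'/r'$, angular positions on $\partial\Ball\cap\partial\Ball'$, and angular positions of the $d-k$ non-shared nuclei on each sphere.

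The key quantitative input is a union-volume lower bound
\begin{equation*}
\Vol(\Ball(c,r)\cup\Ball(c',r')) \;\geq\; \kappa_d\max(r,r')^d + c_d\,\min\bigl(\rho\, r_{\min}^{d-1},\, r_{\min}^d\bigr),
\end{equation*}
where $\rho:=|c-c'|$, $r_{\min}:=\min(r,r')$ and $c_d>0$ depends only on the dimension, obtained by estimating the volume of the spherical cap of the smaller ball sticking out of the larger. Splitting the void-probability factor accordingly yields the main exponential $e^{-\kappa_d\max(r,r')^d}$ -- which, after radial integration, reproduces the $t^{d(d-1)}e^{-\kappa_d t^d}$ weight of Proposition \ref{thm_E_nr_locmaxvert} -- together with an extra decay factor $e^{-c_d\,\rho\, t^{d-1}}$ in the separation. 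Complementarily, the exclusion indicators force each non-shared nucleus to lie in a spherical cap exceeding a hemisphere by an angular width only $O(\rho/t)$, and combined with the convex-hull pointiness condition of Lemma \ref{prop_equivalent_characterization_of_VOM}, this further constrains the angular integrals; both effects produce algebraic gains that vanish as $\rho\to 0$.

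After integration, the saving coming from the $\rho$-decay and the angular restriction reduces the polynomial factor from $t^{d(d-1)}$ down to at most $t^{d(d-2)}$ in the worst case: concretely, the $(d-k)$-dimensional transversal $\rho$-integration against $\rho^{d-k-1+\beta_k}e^{-c_d\rho t^{d-1}}$ (with $\beta_k\geq 0$ the algebraic gain from the angular pointy/cap interplay) yields a saving strictly larger than $t^{-d}$ relative to Proposition \ref{thm_E_nr_locmaxvert}. This gives $\E[N_k]=\mathcal{O}(t^{d(d-2)}e^{-\kappa_d t^d})$ for every $k\in\{0,\ldots,d-1\}$, and summing over the finite set of values of $k$ produces the stated bound. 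The principal obstacle is the rigorous implementation of the generalized Blaschke--Petkantschin formula when $k\geq 1$, where the two circumscribed spheres are constrained to meet along a prescribed sub-sphere rather than being independent; a secondary difficulty is the careful polynomial bookkeeping in $t$ needed to check that the saving is uniform in $k$, with the tightest case being $k=d-1$ (corresponding to pairs of pointy vertices connected by an edge of $\CO$).
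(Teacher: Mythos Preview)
Your overall architecture matches the paper's proof exactly: partition by the number $k$ of shared nuclei, apply Mecke, pass through a generalized Blaschke--Petkantschin formula adapted to the common sub-sphere (the paper's Lemma~\ref{prop_Nikitenko}), and extract the saving from an extra exponential decay in the separation $\rho=|c-c'|$. The gap is in the geometric core. Your union-volume inequality is false as stated: nothing prevents the smaller ball from being contained in the larger one (take $d=2$, $\Ball'$ the unit disk centered at $(1,0)$ and $\Ball$ the disk of radius $\tfrac12$ centered at $(\tfrac12,0)$; both pass through $0$, $\rho=\tfrac12$, yet $\Vol(\Ball\cup\Ball')=\pi=\kappa_2(r')^2$ with no extra term). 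What rules this configuration out is the pointy condition itself, and the missing step is the paper's Lemma~\ref{prop_bound_for_couple_of_vertices}: pointiness of $c$ forces at least a full hemisphere of $\partial\Ball$ to lie outside $\Ball'$, which is equivalent to the constraint $\delta^2\geq(r')^2-r^2$. This constraint both legitimizes a volume bound of the shape $\tfrac{\kappa_d}{2}(r^d+(r')^d)+c\,r^{d-1}\delta$ (two disjoint half-balls plus a pyramid) and is carried as an explicit indicator through the radial integrals, where it is essential for the polynomial bookkeeping to close. Your claim that $\partial\Ball\setminus\Ball'$ exceeds a hemisphere by only $O(\rho/t)$ likewise presupposes exactly this constraint; without it the cap can be arbitrarily small.

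A second, smaller point: the paper does \emph{not} use any angular ``$\beta_k$-gain'' from the pointy/cap interplay; the simplex factors $\Delta_k,\Delta_{d-k}$ in the Blaschke--Petkantschin Jacobian are simply bounded by constants, and the entire saving is produced by the volume bound together with the indicator $\1_{\delta^2\geq(r')^2-r^2}$. In particular, the tight case $k=d-1$ is not handled by an angular gain but by the fact that $\Sp_{Q^\perp}$ is then $0$-dimensional, forcing $\langle u,u'\rangle=-1$ and hence $\delta=\sqrt{r^2-\rho_0^2}+\sqrt{(r')^2-\rho_0^2}$; after the substitution $(a,b)=(r^{d-1}\sqrt{(r')^2-\rho_0^2},\,r^{d-1}\sqrt{r^2-\rho_0^2})$ the integral collapses to $\int_t^\infty r^{d^2-d-1}e^{-\kappa_d r^d}\,dr$, yielding the claimed $t^{d(d-2)}e^{-\kappa_d t^d}$. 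Your $\rho$-integration heuristic with a putative $\beta_{d-1}>0$ does not obviously reproduce this without the $\delta^2\geq(r')^2-r^2$ input.
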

	
	\begin{proof}
	This proof is structured into six steps. In Step 1, we use the Mecke formula to obtain an integral expression for \(\E \left[ \Card \left((\VOMR)^2_{\neq}\right) \right]\), to which we apply  in Step 2 a Blaschke-Petkantschin change of variables stated in Lemma \ref{prop_Nikitenko}. In Step 3, we use the intermediary geometric result given in Lemma \ref{prop_bound_for_couple_of_vertices} to obtain a first upper bound for this integral. Finally, Steps 4-6 are devoted to a purely analytical treatment of the integrals involved.
	\\~\\
	\noindent\textbf{Step 1. Rewriting of \(\E \left[ \Card \left((\VOMR)^2_{\neq}\right) \right]\) using Mecke's formula}\\~\\
	Let us consider a pair \((c,c')\in(\VOMR)^2_{\neq}\).
	We associate to each of the two vertices a family of \(d\) distinct points of \(\Phi\) which, together with the origin, are the nuclei of the Voronoi cells containing respectively \(c\) and \(c'\). We then distinguish \(d\) different cases in function of the cardinality of the intersection of these two families. When the two families share exactly \(k\) points, for \(k\in\lbrace0,...,d-1\rbrace\), we use the short notation \(\X=(\m X_1,...,\m X_k)\) for the common points and \(\Y = (\m Y_1,...,\m Y_{d-k})\)	and \(\Y' = (\m Y'_1,...,\m Y'_{d-k})\) for the remaining points of the two families respectively. %\textcolor{red}{With a slight abuse, we treat $\X$, $\Y$ and $\Y'$ either as vectors or as finite sets, depending on the context.}
	%\Comment{PC: new sentence to explain that sometimes $\X$ is seen as a set and not a vector. Ceci: ambiguity removed.}
	The number \((d-k)\) is the minimal dimension of a facet containing both \(c\) and \(c'\). We remark that the limit case \(k=d\) corresponds to \(c=c'\), which is excluded. Consequently, we describe the cardinality of \((\VOMR)^2_{\neq}\) as follows:
	%% BACKUP:
	%\begin{equation}\label{eq_card_pair_vertices_as_sum_over_pts_of_PPP}
	%	\Card \left((\VOMR)^2_{\neq}\right) = \sum\limits_{k=0}^{d-1} \frac{1}{k!((d-k)!)^2} \sum_{(\X,\Y,\Y')\in \Phi^{2d-k}_{\neq}}
	%	%\limits_{\substack{\X_{1...k}, \Y_{1...d-k},\\ \Y'_{1...d-k}\in\Phi^{2d-k}_{\neq}}} 
	%	\1_{(\Center(0,\X,\Y),\Center(0,\X,\Y'))\in(\VOMR)^2_{\neq}}
	%\end{equation}
	\begin{equation}\label{eq_card_pair_vertices_as_sum_over_pts_of_PPP}
		\Card \left((\VOMR)^2_{\neq}\right) = \sum\limits_{k=0}^{d-1} \frac{1}{k!((d-k)!)^2} \sum_{\Phi^{2d-k}_{\neq}}
		\1_{(\Center(0,\m X_1,...,\m X_k,\m Y_1,...,\m Y_{d-k}),\Center(0,\m X_1,...,\m X_k,\m Y_1',...,\m Y_{d-k}'))\in(\VOMR)^2_{\neq}}
	\end{equation}
	where \(\Center(\cdot)\) denotes the center of the sphere passing through the specified points and 
	where, explicitly, the sum runs over the set of all \((2d-k)\)-tuples of pairwise distinct points of \(\Phi\),
	\begin{equation*}
	    \Phi^{2d-k}_{\neq} = \lbrace (\m X_1,...,\m X_k,\m Y_1,...,\m Y_{d-k},\m Y_1',...,\m Y_{d-k}') = (z_1,...,z_{2d-k}) \in \Phi^{2d-k} | z_i\neq z_j \forall i\neq j\rbrace.
	\end{equation*}
	%where we recall that \(\Phi^{2d-k}_{\neq}\) denotes the set of all \((2d-k)\)-tuples of pairwise distinct points of \(\Phi\) 
	Moreover, the division by \(k!((d-k)!)^2\) compensates for the over-representation of each couple of vertices given by the permutations of the entries of \(\X\), \(\Y\) and \(\Y'\) within their own set. We take the expectation of the identity (\ref{eq_card_pair_vertices_as_sum_over_pts_of_PPP}) and then apply Mecke formula: we obtain
	%% BACKUP
	%\begin{align}\label{eq_expectation_pairs_just_after_Mecke}
	%	\E  \left[ \Card \left((\VOMR)^2_{\neq}\right) \right] & = \sum\limits_{k=0}^{d-1} \frac{1}{k!((d-k)!)^2} \E\left[ \sum_{(\X,\Y,\Y')\in \Phi^{2d-k}_{\neq}}
		%\limits_{\substack{\X_{1...k}, \Y_{1...d-k},\\ \Y'_{1...d-k}\in\Phi^{2d-k}_{\neq}}} 
	%	\1_{(\Center(0,\X,\Y),\Center(0,\X,\Y'))\in(\VOMR)^2_{\neq}} \right]
	%	\nonumber
	%	\\
	%	& = \sum\limits_{k=0}^{d-1} \frac{1}{k!((d-k)!)^2} \int\limits_{(\R^d)^{k}} d\x\; \int\limits_{(\R^d)^{d-k}} d\y \int\limits_{(\R^d)^{d-k}} d\y' \E \left[ \1_{(c,c')\in(\VOMR)^2_{\neq}(\Phi\cup\lbrace\x,\y,\y'\rbrace)} \right]
	%\end{align}
	\begin{align}\label{eq_expectation_pairs_just_after_Mecke}
		\E  \left[ \Card \left((\VOMR)^2_{\neq}\right) \right] 
		& = \sum\limits_{k=0}^{d-1} \frac{1}{k!((d-k)!)^2} \E\left[ \sum_{\Phi^{2d-k}_{\neq}}
		%\limits_{\substack{\X_{1...k}, \Y_{1...d-k},\\ \Y'_{1...d-k}\in\Phi^{2d-k}_{\neq}}} 
		\1_{(\Center(0,\m X_1,...,\m X_k,\m Y_1,..., \m Y_{d-k}),\Center(0,\m X_1,...,\m X_{k},\m Y_1',...,\m Y_{d-k}'))\in(\VOMR)^2_{\neq}} \right]
		\nonumber
		\\
		& = \sum\limits_{k=0}^{d-1} \frac{1}{k!((d-k)!)^2} \int\limits_{(\R^d)^{k}} d\x\; \int\limits_{(\R^d)^{d-k}} d\y \int\limits_{(\R^d)^{d-k}} d\y' \E \left[ \1_{(c,c')\in(\VOMR)^2_{\neq}(\Phi\cup\lbrace\x,\y,\y'\rbrace)} \right]
	\end{align}
	where we use the analogous short notation \(\x=(\m x_1,...,\m x_k)\), \(\y=(\m y_1,...,\m y_{d-k})\), \(\y'=(\m y'_1,...,\m y'_{d-k})\) for the points and \(d\x=d\m x_1... d\m x_k\), \(d\y=d\m y_1... d\m y_{d-k}\), \(d\y'=d\m y'_1... d\m y'_{d-k}\) for the corresponding Lebesgue measures, as well as the short notations
	\begin{equation*}
		c = \Center(0,\m x_1,...,\m x_k,\m y_1,...,\m y_{d-k}) \text{ and } c' = \Center(0,\m x_1,...,\m x_k,\m y_1',...,\m y_{d-k}')
	\end{equation*}
	and
	\begin{equation*}
	    \lbrace\x,\y,\y'\rbrace = \lbrace \m x_1,...,\m x_k,\m y_1,...,\m y_{d-k},\m y_1',...,\m y_{d-k}'\rbrace.
	\end{equation*}
	Within the expectation on the right-hand side of (\ref{eq_expectation_pairs_just_after_Mecke}), \(\x,\y\) and \(\y'\) are deterministic. Therefore, the only random feature of the indicator function consists on whether the points \(c,c'\) indeed are vertices of the Voronoi tessellation generated by \(\Phi\cup\lbrace0\rbrace\cup\lbrace\x,\y,\y'\rbrace\). That is the case exactly when no other point of \(\Phi\) falls in the interior of the balls \(\Ball = \Ball(0,\m x_1,\ldots,\m x_k,\m y_1,\ldots,\m y_{d-k})\) and \(\Ball'=\Ball(0,\m x_1,\ldots,\m x_k,\m y_1',\ldots,\m y_{d-k}')\). We therefore have
	\begin{align}\label{eq_expectation_pairs_equals_exp_x_indicator}
		\E \left[ \1_{c,c'\in\VOMR(\Phi\cup\lbrace\x,\y,\y'\rbrace)} \right] 
		%& = \PR\left( \Phi\cap(\Ball\cup\Ball') = \emptyset \right) \1_{(c,c')\in\left(\VOMR\right)^2_{\neq}(\lbrace0,\x,\y,\y'\rbrace)} %\nonumber
		%\\		& 
		& = e^{-\Vol_d(\Ball\cup\Ball')} \1_{(c,c')\in\left(\VOMR\right)^2_{\neq}(\lbrace0\rbrace\cup\lbrace\x,\y,\y'\rbrace)}.
	\end{align}
	Inserting (\ref{eq_expectation_pairs_equals_exp_x_indicator}) into (\ref{eq_expectation_pairs_just_after_Mecke}), we obtain
	\begin{equation}\label{eq_expectation_pairs_for_k_fixed_after}
		\E \left[ \Card \left((\VOMR)^2_{\neq}\right) \right] =  \sum\limits_{k=0}^{d-1} \frac{1}{k!((d-k)!)^2} \int\limits_{(\R^d)^{k}} d\x\; \int\limits_{(\R^d)^{d-k}} d\y \int\limits_{(\R^d)^{d-k}} d\y'\; e^{-\Vol_d(\Ball\cup\Ball')} \1_{(c,c')\in\left(\VOMR\right)^2_{\neq}(\lbrace0\rbrace\cup\lbrace\x,\y,\y'\rbrace)}.
	\end{equation}
	\noindent\textbf{Step 2. Spherical change of variables}\\~\\	
	The computation of the integrals in (\ref{eq_expectation_pairs_equals_exp_x_indicator}) requires a spherical Blaschke-Petkantschin-type change of variables. We introduce it in Lemma \ref{prop_Nikitenko} below which combines two integral geometric formulas given in \cite{Nikitenko}, see Figure \ref{fig_BP_Nikitenko}. 
	\begin{figure}[h!]
		\centering
		\includegraphics[scale=1]{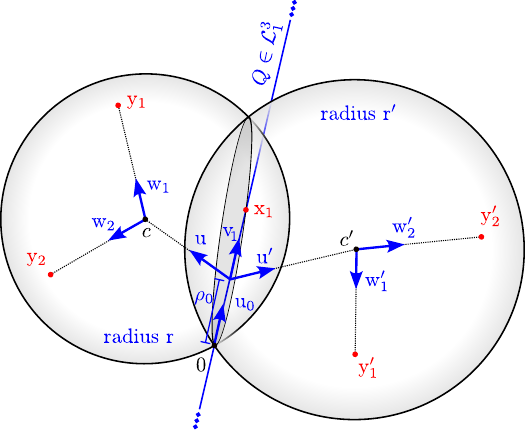}
		\caption{Sketch of the Blaschke-Petkantschin-type variable change of Lemma \ref{prop_Nikitenko} for the case \(d=3,k=1\).}
		\label{fig_BP_Nikitenko}
	\end{figure}
%	\Comment{PC: possible pb of consistency, $x_1$ does not appear in Fig 5, but $y_1$ and $y_1'$ do. Ceci: changed figure as discussed.}
	\begin{lemma}\label{prop_Nikitenko}
		For \(k\in\lbrace0,...,d-1\rbrace\), let \(\m x_1,...,\m x_k,\m y_{1},...,\m y_{d-k},\m y'_{1},...,\m y'_{d-k}\in\R^d\) be \(2d-k\) points in general position. We denote by \(\text{Vect}(\cdot)\) the linear subspace of real linear combination of the specified points and by \(\Sp(\cdot)\) the sphere of least dimension that passes by the specified points.
		\begin{itemize}
			\item The \(k\) points \(\m x_1,...,\m x_k\) determine a \(k\)-dimensional linear subspace \(Q=\text{Vect}(\m x_1,...,\m x_k)\in\mathcal{L}^d_k\). Within the subspace \(Q\), the points \(0\) and \(\m x_1,...,\m x_k\) lay on a unique \((k-1)\)-dimensional sphere \(\Sp(0,\m x_1,...,\m x_k)\) of radius \(\rho_0\in\R_{+}\). We denote by \(\m u_0\in\Sp_{Q}\), where \(\Sp_Q\) is the unit sphere on the space \(Q\), the unit vector pointing from the origin towards the center of \(\Sp(0,\m x_1,...,\m x_k)\). Moreover, for \(i\in\lbrace1,...,k\rbrace\), we denote by \(\m v_i\in\Sp_Q\) the unit vector pointing from the center of \(\Sp(0,\m x_1,...,\m x_k)\) towards \(\m x_i\).
			\item The \((d+1)\) points \(0,\m x_1,...,\m x_k,\m y_1,...,\m y_{d-k}\) determine a \(d\)-dimensional sphere \(\Sp(0,\m x_1,...,\m x_k,\m y_1,...,\m y_{d-k})\) of center \(c\) and radius \(\m r\in\R_{+}\). For \(j\in\lbrace1,...,d-k\rbrace\) we denote by \(\m w_j\in\Sp_{\R^d}\) the unit vector pointing from \(c\) towards \(\m y_j\), where \(\Sp_{\R^d}\) denotes the unit sphere in the space \(\R^d\).
			\item The \((d+1)\) points \(0,\m x_1,...,\m x_k,\m y_1',...,\m y_{d-k}'\) determine a \(d\)-dimensional sphere \(\Sp(0,\m x_1,...,\m x_k,\m y_1',...,\m y_{d-k}')\) of center \(c'\) and radius \(\m r'\in\R_{+}\). For \(j\in\lbrace1,...,d-k\rbrace\) we denote by \(\m w'_j\in\Sp_{\R^d}\) the unit vector pointing from \(c'\) towards \(\m y'_j\).
			\item Finally, we denote by \(\m u,\m u'\in\Sp_{Q^\perp}\) the unit vectors in the \((d-k)\)-dimensional space \(Q^\perp\) pointing respectively towards \(c\) and \(c'\) from the center of the \(k\)-dimensional ball \(\Sp(0,\m x_1,...,\m x_k)\).
		\end{itemize}
		We have for \(i\in\lbrace1,...,k\rbrace\),
		\begin{equation*}
			\m x_i = \rho_0(\urm_0+\m v_i)
		\end{equation*}
		%or, in short notation \(\x=\rho_0(\urm_0+\vbf)\) 
		and for \(j\in\lbrace1,...,d-k\rbrace\),
		\begin{align*}
			& \m y_j = \rho_0\urm_0+\sqrt{\m r^2-\rho_0^2}\m u+\m r\m w_j, \\
			& \m y'_j = \rho_0\urm_0+\sqrt{\m (\m r')^2-\rho_0^2}\m u+\m r'\m w'_j.
		\end{align*}
		%or, in short notation, \(\y=\rho_0\urm_0+\sqrt{\m r^2-\rho_0^2}\m u+\m r\wbf\) and \(\y'=\rho_0\urm_0+\sqrt{\m (\m r')^2-\rho_0^2}\m u+\m r'\wbf'\). 
		Then for all non-negative functions we have
		\begin{equation*}
			\begin{split}
				\int\limits_{(\R^d)^{2d-k}} d\x \, d\y \, d\y' f(\x,\y,\y')
				= & \int\limits_{\mathcal{L}_k^d}d\m Q\int\limits_{\Sp_{Q}^{k+1}}d\urm_0d\vbf\int\limits_{\R_{+}}d\rho_0 \int\limits_{\Sp_{Q^\perp}}d\m u\int\limits_{\R_{+}}d\m r\int\limits_{\Sp_{\R^d}^{d-k}}d\wbf \int\limits_{\Sp_{Q^\perp}}d\m u'\int\limits_{\R_{+}}d\m r'\int\limits_{\Sp_{\R^d}^{d-k}}d\wbf' \\
				& \rho_0^{kd-1} (k!\Delta_{k}(-\urm_0,\vbf))^{d-k+1} \1_{0\leq\rho_0\leq \m r,\m r'} \\
				& \m r^{(d-k)(d-1)+1} \sqrt{\m r^2-\rho_0^2}^{d-k-2}(d-k)!\Delta_{d-k}(-\frac{\sqrt{\m r^2-\rho_0^2}}{\m r}\m u,P_{\m Q^\perp}(\wbf)) \\
				& \m (\m r')^{(d-k)(d-1)+1} \sqrt{\m (\m r')^2-\rho_0^2}^{d-k-2}(d-k)!\Delta_{d-k}(-\frac{\sqrt{\m (\m r')^2-\rho_0^2}}{\m r'}\m u',P_{\m Q^\perp}(\wbf')) \\
				& f(\rho_0(\urm_0+\vbf,\rho_0\urm_0+\sqrt{\m r^2-\rho_0^2}\m u+\m r\wbf,\rho_0\urm_0+\sqrt{\m (\m r')^2-\rho_0^2}\m u+\m r'\wbf')
			\end{split}
		\end{equation*}
		where we used the short notation \(P_{\m Q^\perp}(\wbf) = (P_{\m Q^\perp}(\m w_1),...,P_{\m Q^\perp}(\m w_{d-k}))\) (and similarly for \(P_{\m Q^\perp}(\wbf'))\) and where we denote by \(d\m u_0\), \(d\vbf=d\m v_1... d\m v_k\), \(d\m u\), \(d\m u'\), \(d\wbf=d\m w_1... d\m w_{d-k}\) and \(d\wbf'=d\m w'_1... d\m w'_{d-k}\) the respective surface area measures on the corresponding spheres and where we associate to the Grassmannian \(\mathcal{L}^d_k\) its standard measure \(Q\), normalized such that
		\begin{equation*}
			\int\limits_{\mathcal{L}^d_k} d\m Q = \binom{d}{k} \frac{\kappa_d \kappa_{d-1} ... \kappa_{d-k+1}}{\kappa_k \kappa_{k-1} ... \kappa_1},
		\end{equation*}
		with $\kappa_d$ given at \eqref{eq:defkappad}.
		%We recall that \(\kappa_d =\frac{\pi^\frac{d}{2}}{\Gamma(\frac{d}{2}+1)}\) is the volume of the unit ball in \(\R^d\).\Comment{PC: is the reminder on $\kappa_d$ at the right place in the statement of Lemma \ref{prop_Nikitenko}? Ceci: Solved by PC}
	\end{lemma}
	
	\begin{remark}\label{remark_k=0}
		When \(k=0\), the Grassmannian \(\mathcal{L}^d_0\) considered in Lemma \ref{prop_Nikitenko} above is trivial, equal to  \(\lbrace0\rbrace\). For this reason, the integrals with respect to \(Q,\urm_0,\m v_1,...,\m v_k,\rho_0\) disappear and this change of variables corresponds to a double classical spherical Blaschke-Petkantschin change of variables, see \eqref{eq_coord_chg_BP_Jacobian}.
	\end{remark}
	
	\begin{proof}[Proof of Lemma \ref{prop_Nikitenko}]
		This particular change of variables is a combination of Blaschke-Petkantschin type formulas  presented in \cite{Nikitenko}. In particular, \cite[Theorem 4]{Nikitenko} presents a spherical change of variables for \(m\) points laying on a \((m-1)\)-dimensional sphere passing through the origin: we apply this result with \(m=k\) to represent the points \(\m x_1,...\m x_k\) as \(\rho_0(\urm_0+\m v_1),... ,\rho_0(\urm_0+\m v_k)\). \cite[Theorem 5]{Nikitenko} produces a spherical change of variables describing \(m\) points belonging to a \((m+k-1)\)-dimensional sphere which is determined by those points together with a given \((k-1)\)-dimensional sphere centered at the origin. We apply this result twice with \(m=d-k\), once for \((\m y_1-\rho_0\m u_0,...,\m y_{d-k}-\rho_0\m u_0)\) and once for \((\m y'_1-\rho_0\m u_0,...,\m y'_{d-k}-\rho_0\m u_0)\) to obtain our formula.
	\end{proof}
	
	With the change of variables of Lemma \ref{prop_Nikitenko}, expression (\ref{eq_expectation_pairs_for_k_fixed_after}) takes the form
	\begin{align}\label{eq_expectation_pairs_after_Nikitenko}
		\E \left[ \Card \left((\VOMR)^2_{\neq}\right) \right] =  & \sum\limits_{k=0}^{d-1} \frac{1}{k!((d-k)!)^2} \int\limits_{\mathcal{L}_k^d}d\m Q\int\limits_{\Sp_{\m Q}^{k+1}}d\urm_0d\vbf\int\limits_{\R_{+}}d\rho_0 \int\limits_{\Sp_{\m Q^\perp}}d\m u\int\limits_{\R_{+}}d\m r\int\limits_{\Sp_{\R^d}^{d-k}}d\wbf \int\limits_{\Sp_{Q^\perp}}d\m u'\int\limits_{\R_{+}}d\m r'\int\limits_{\Sp_{\R^d}^{d-k}}d\wbf 
		\nonumber\\
		& \rho_0^{kd-1} \left(k!\Delta_{k}\left(-\urm_0,\vbf\right)\right)^{d-k+1} \1_{0\leq\rho_0\leq \m r,\m r'} 
		\nonumber\\
		& \m r^{(d-k)(d-1)+1} \sqrt{\m r^2-\rho_0^2}^{d-k-2}(d-k)!\Delta_{d-k}\left(-\frac{\sqrt{\m r^2-\rho_0^2}}{\m r}\m u,P_{\m Q^\perp}(\wbf)\right) \nonumber
		\\
		& \m (\m r')^{(d-k)(d-1)+1} \sqrt{\m (\m r')^2-\rho_0^2}^{d-k-2}(d-k)!\Delta_{d-k}\left(-\frac{\sqrt{\m (\m r')^2-\rho_0^2}}{\m r'}\m u',P_{\m Q^\perp}(\wbf')\right) 
		\nonumber\\
		& e^{-\Vol_d(\Ball\cup\Ball')} \1_{(c,c')\in(\VOMR)^2_{\neq}(\lbrace0\rbrace\cup\lbrace\x,\y,\y'\rbrace)}
	\end{align}
	where we used the short notation introduced in the Lemma.
	\\~\\
	\noindent\textbf{Step 3. Upper bound for the integrand}
	\\~\\	
	We can now start to determine an upper bound for each factor of the integrand of (\ref{eq_expectation_pairs_after_Nikitenko}). By symmetry of this integrand, we can henceforth assume without loss of generality that 
	\begin{equation*}
		\m r'\geq \m r.
	\end{equation*}
	Next, we bound the quantities in the last line of the right hand side of (\ref{eq_expectation_pairs_after_Nikitenko}) in Lemma \ref{prop_bound_for_couple_of_vertices} below.
	\begin{lemma}\label{prop_bound_for_couple_of_vertices}
		Let \(\Ball\) and \(\Ball'\) be two balls of respective centers $c$ and $c'$ and respective radii \(\m r\) and \(\m r'\) where \(\m r\leq \m r'\) and such that $0\in \partial \Ball \cap \partial \Ball'$. Let $\x=(\m x_1,...,\m x_k)\in (\partial \Ball\cap \partial \Ball')^k$, $\y=(\m y_1,...,\m y_{d-k})\in (\partial\Ball\setminus \Ball')^{d-k}$, $\y'=(\m y'_1,...,\m y'_{d-k})\in(\partial\Ball'\setminus\Ball)^{d-k}$.
%		\Comment{PC: $\x=(\m x_1,...,\m x_k)\subset\partial \Ball\cap \partial \Ball'$ is not very rigorous. Ceci: ``solved''}
		We then obtain
		\begin{align*}\label{eq_bound_on_{d-1}ol_for_couple_of_vertices}
			e^{-\Vol_d(\Ball\cup\Ball')} \1_{(c,c')\in(\VOM)^2_{\neq}(\lbrace0\rbrace\cup\lbrace\x,\y,\y'\rbrace)} 
			& \leq e^{-\frac{\kappa_{d}}{2} \m r^d -\frac{\kappa_{d}}{2} (\m r')^d - \frac{\kappa_{d-1}}{2d} \m r^{d-1} \delta} \1_{\delta^2\geq \m (\m r')^2 - \m r^2}
		\end{align*}
		where we recall that \(\kappa_d\) is the volume of the d-dimensional unit ball and where \(\delta = \|c-c'\|\).
	\end{lemma}
	
	\begin{proof}[Proof]
		By Lemma \ref{prop_equivalent_characterization_of_VOM} and because the elements of $\y$ (resp. $\y'$) lie outside of $\Ball'$ (resp. $\Ball$), the condition \((c,c')\in(\VOM)^2_{\neq}(\lbrace0\rbrace\cup\lbrace\x,\y,\y'\rbrace)\) implies that there must be at least a hemisphere of \(\Sp=\partial\Ball\) (resp. \(\Sp'=\partial\Ball'\)) outside of \(\Ball'\) (resp.  \(\Ball\)). %and a hemisphere of \(\Sp'=\partial\Ball'\) outside of \(\Ball\).
		Therefore, the two outermost half balls \(\Ball_{(\text{\textonehalf})}\subset\Ball\) and \(\Ball'_{(\text{\textonehalf})}\subset\Ball'\), with base perpendicular to the line joining \(c\) and \(c'\), are disjoint, see Figure \ref{fig_half_balls_for_vol_bound} (a), hence%. Since we suppose \(\m r'\geq \m r\), their combined volume is bounded from below by
		\begin{equation}\label{eq:vol2halfballs}
			\Vol_d\left( \Ball_{(\text{\textonehalf})} \cup \Ball_{(\text{\textonehalf})}' \right) = \frac{\kappa_d}{2} \m r^d + \frac{\kappa_d}{2} \m (\m r')^d .%\geq \kappa_d \m r^2.
		\end{equation}
		\begin{figure}[h!]
			\centering
			\includegraphics[scale=1]{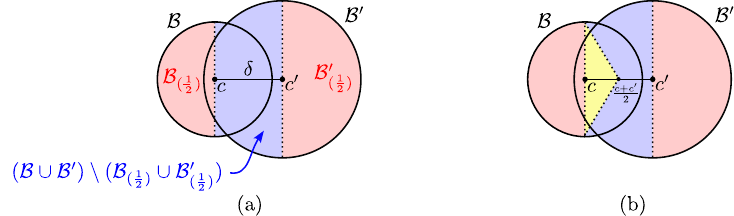}
			\caption{(a) Partition of the set \(\Ball\cup\Ball'\) into two half-balls \(\Ball_{(\text{\textonehalf})}\) and \(\Ball_{(\text{\textonehalf})}'\) and their complement.
			(b) The pyramid, shaded in green, with same base as \(\Ball_{(\text{\textonehalf})}\) contained in \((\Ball\cup\Ball')\setminus(\Ball_{(\text{\textonehalf})} \cup \Ball_{(\text{\textonehalf})}')\).}
			\label{fig_half_balls_for_vol_bound}
		\end{figure}
		\\
		\noindent
		As for the remaining set \(\Ball\cup\Ball'\setminus(\Ball_{(\text{\textonehalf})} \cup \Ball_{(\text{\textonehalf})}')\), we observe that it contains a pyramid with same base as $\Ball_{(\text{\textonehalf})}$ and with apex at the midpoint of the segment joining \(c\) and \(c'\), see Figure \ref{fig_half_balls_for_vol_bound} (b). Consequently,
		\begin{equation}\label{eq:volremaining}
			\Vol_d(\Ball\cup\Ball'\setminus(\Ball_{(\text{\textonehalf})} \cup \Ball_{(\text{\textonehalf})}'))\ge \frac{\kappa_{d-1}}{2d}\m r^{d-1}\delta.
		\end{equation}
		\noindent Combining \eqref{eq:vol2halfballs} and \eqref{eq:volremaining}, we get
		\begin{equation*}
			e^{-\Vol_d(\Ball\cup\Ball')} = e^{-\Vol_d\left(\text{\textonehalf}\Ball\cup\text{\textonehalf}\Ball'\right)-\Vol_d(\Ball\cup\Ball'\setminus(\text{\textonehalf}\Ball \cup \text{\textonehalf}\Ball')} \leq e^{-\frac{\kappa_d}{2}\m r^d-\frac{\kappa_d}{2}(\m r')^d-\frac{\kappa_{d-1}}{2d}\m r^{d-1}\delta}.
		\end{equation*}
		Finally, we find an upper bound for the indicator function of \((c,c')\in(\VOM)^2_{\neq}(\lbrace0\rbrace\cup\lbrace\x,\y,\y'\rbrace)\). As mentioned previously, for it to be satisfied, there must be at least a hemisphere of \(\partial\Ball\) outside of \(\Ball'\). This is the case whenever the distance \(\delta\) between the centers of the two balls is greater than a minimal value \(\sqrt{\m (\m r')^2-\m r^2}\), corresponding to the limit case of Figure \ref{fig_vol_bounds_condition_on_delta}, where \(\partial\Ball\) exceeds \(\Ball'\) by exactly one hemisphere. This justifies the bound
		\begin{equation*}
			\1_{(c,c')\in(\VOM)^2_{\neq}(\lbrace0\rbrace\cup\lbrace\x,\y,\y'\rbrace)} \leq \1_{\delta^2\geq \m (\m r')^2-\m r^2}.
		\end{equation*}
		\begin{figure}[h!]
			\centering
			\includegraphics[scale=1]{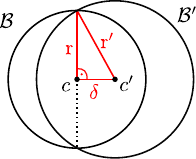}
			\caption{Lower limit case for the distance \(\delta\) between the centers \(c,c'\) of the balls \(\Ball,\Ball'\).}
			\label{fig_vol_bounds_condition_on_delta}
		\end{figure}
	\end{proof}
	We now insert the upper bound provided by Lemma \ref{prop_bound_for_couple_of_vertices} into the right hand side of (\ref{eq_expectation_pairs_after_Nikitenko}). We also bound the functionals
	$ \Delta_{k}(\cdot)$ and $\Delta_{d-k}(\cdot)$ by a constant since they represent the volumes of simplices which are contained in the unit ball. We then obtain the existence of a positive constant $C$ such that
	\begin{align}\label{eq_bound_on_couple_vert_before_var_change}
		\E \left[ \Card \left((\VOMR)^2_{\neq}\right) \right] \leq \Cst \sum\limits_{k=0}^{d-1} & \int\limits_{\mathcal{L}_k^d}d\m Q\int\limits_{\Sp_{\m Q}^{k+1}}d\urm_0d\vbf \int\limits_{t}^\infty d\m r \int\limits_0^{\m r} d\rho_0 \int\limits_{\Sp_{\m Q^\perp}}d\m u\int\limits_{\Sp_{\R^d}^{d-k}}d\wbf \int\limits_{\Sp_{\m Q^\perp}}d\m u'\int\limits_{\m r}^\infty d\m r'\int\limits_{\Sp_{\R^d}^{d-k}}d\wbf'\rho_0^{kd-1}  
		\nonumber\\
		&
		\hspace{-0.5cm}(\m r\m r')^{(d-k)(d-1)+1} \left((\m r^2-\rho_0^2)((\m r')^2-\rho_0^2)\right)^{\frac{d-k-2}{2}}
		e^{-\frac{\kappa_d}{2}(\m r^d+(\m r')^d) - \frac{\kappa_{d-1}}{2d}\m r^{d-1}\delta} \1_{\delta^2\geq \m (\m r')^2-\m r^2} 
	\end{align}
	where the distance between centers \(\delta=\|c-c'\|\) can be expressed in function of the variables of integration as
	\begin{equation}\label{eq_delta_generic}
		\delta = \sqrt{\m r^2-\rho_0^2+\m (\m r')^2-\rho_0^2-2\sqrt{\m r^2-\rho_0^2}\sqrt{\m (\m r')^2-\rho_0^2}\langle \m u,\m u'\rangle}.
	\end{equation}
	Henceforth, we denote by $C$ a generic positive constant depending only on dimension $d$ and whose value may change from line to line (not to be confused with the constant $C_d$ whose explicit value is determined in Section \ref{sec_volume_kingman}). 
%\Comment{PC: sentence about the constant $C$. OK? OK!}

	The integrand of (\ref{eq_bound_on_couple_vert_before_var_change}) does not depend on the variables \(\urm_0,\vbf,\wbf\) and \(\wbf'\), which we immediately integrate to a constant. Moreover, since it only depends on \(\m Q,\m u\) and \(\m u'\)  through the scalar product \(\langle u, u'\rangle\) hidden within \(\delta\), we can without loss of generality make the choice
	\begin{equation*}
		\m Q = \m Q_0 = \text{Vect}_\R (e_1,...,e_k) \text{ and } \m u' = e_d
	\end{equation*}
	where \((e_1,...,e_d)\) denotes the standard orthogonal basis of \(\R^d\).
	After these operations, we obtain the bound
	\begin{equation}\label{eq_bound_on_couple_vert_with_fixed_Q}
		\E \left[ \Card \left((\VOMR)^2_{\neq}\right) \right] \leq \Cst \sum\limits_{k=0}^{d-1} I_k(t) 
	\end{equation}
	where %\(C\) is a positive constant and, 
	for \(k\in\lbrace1,...,d-1\rbrace\),
	\begin{align}\label{eq_definition_I_k}
		I_k(t)=&\int\limits_{t}^{\infty}d\m r  \int\limits_{\m r}^{\infty}d\m r' \int\limits_{0}^{\m r} d\rho_0 \int\limits_{\Sp_{\m Q_0^\perp}}d\m u \, \rho_0^{kd-1} (\m r\m r')^{(d-k)(d-1)+1} \left((\m r^2-\rho_0^2)((\m r')^2-\rho_0^2)\right)^{\frac{d-k-2}2} \nonumber
		\\
		& \hspace{5cm}e^{-\frac{\kappa_d}{2}\m r^d-\frac{\kappa_d}{2}(\m r')^d- \frac{\kappa_{d-1}}{2d}\m r^{d-1}\delta} \1_{\delta^2\geq (\m r')^2-\m r^2} 
	\end{align}
	where 
	\begin{equation}\label{eq_definition_delta_before_step_4}
		\delta = \sqrt{\m r^2-\rho_0^2+(\m r')^2-\rho_0^2-2\sqrt{\m r^2-\rho_0^2}\sqrt{(\m r')^2-\rho_0^2}\langle\m  u,e_d\rangle}.
	\end{equation}
	Concerning the case \(k=0\), as discussed in Remark \ref{remark_k=0}, the integral with respect to \(\rho_0\) disappears. Moreover we have \(\delta=\sqrt{(\m r')^2+\m r^2-2\m r'\m r\langle\m u, e_d\rangle}\) (which corresponds to setting \(\rho_0=0\) in \eqref{eq_delta_generic}) and consequently \(\delta^2\geq(\m r')^2-\m r^2 \iff \langle\m u, e_d\rangle\leq \frac{\m r}{\m r'}\), hence
	\begin{equation}\label{eq_definition_I_0}
		I_0(t) = \int\limits_{t}^{\infty}d\m r  \int\limits_{\m r}^{\infty}d\m r' \int\limits_{\Sp_{\R^d}}d\m u \,(\m r\m r')^{d^2-1} e^{-\frac{\kappa_d}{2}\m r^d-\frac{\kappa_d}{2}(\m r)^d- \frac{\kappa_{d-1}}{2d}\m r^{d-1}\sqrt{(\m r')^2+\m r^2-2\m r'\m r\langle\m u, e_d\rangle}} \1_{\langle\m u, e_d\rangle\leq \frac{\m r}{\m r'}} .
	\end{equation}
	In the next three steps, we bound the integrals \(I_k(t)\) according to three different cases: \(k=d-1\), \(k\in\lbrace1,...,d-1\rbrace\) and \(k=0\).
	\\~\\
	\noindent\textbf{Step 4. Analytical treatment of \(I_{d-1}(t)\), see \eqref{eq_definition_I_k}}
	\\~\\
	In the case \(k=d-1\), \(\Sp_{Q_0^\perp}\) is a \(0\)-dimensional sphere, so the fact that \(\Ball\) and \(\Ball'\) are distinct fixes \(\langle u, e_d\rangle=-1\), hence \(\delta\) takes the form
%	\Comment{PC: I would write `so the fact that ${\mathcal B}$ and ${\mathcal B}'$ are distinct implies that \(\langle u, e_d\rangle=-1\). Ceci: ok, done.}
	\begin{equation*}
	    \delta=\sqrt{(\m r')^2-\rho_0^2+\m r^2-\rho_0^2+2\sqrt{(\m r')^2-\rho_0^2} \sqrt{\m r^2-\rho_0^2} }=\sqrt{(\m r')^2-\rho_0^2}+\sqrt{\m r^2-\rho_0^2}.
	\end{equation*} 
	Moreover, the integral with respect to \(\m u\) disappears,
	\begin{equation}\label{eq_I_d-1}
		I_{d-1}(t) = \int\limits_{t}^{\infty} d\m r \int\limits_{\m r}^{\infty} d\m r' \int\limits_{0}^{\m r} d\rho_0 \frac{\rho_0^{d^2-d-1} (\m r \m r')^{d}}{\left((\m r^2-\rho_0^2)((\m r')^2-\rho_0^2)\right)^{\frac{1}{2}}}  e^{-\frac{\kappa_d}{2}\m r^d-\frac{\kappa_d}{2}(\m r')^d} e^{-\frac{\kappa_{d-1}}{2d}\m r^{d-1}(\sqrt{\m r^2-\rho_0^2}+\sqrt{(\m r')^2-\rho_0^2})}.
	\end{equation}
	We now bound the factor \(e^{-\frac{\kappa_d}{2}\m r^d-\frac{\kappa_d}{2}(\m r')^d}\leq e^{-\kappa_d\m r^d}\) in \eqref{eq_I_d-1} and treat the integrals with respect to \(\m r'\) and \(\rho_0\) with the change of variables \(\m b = \m r^{d-1}\sqrt{\m r^2-\rho_0^2}\in[0,\m r^d]\) and \(\m a=\m r^{d-1}\sqrt{(\m r')^2-\rho_0^2}\in[\m b,\infty[\) whose volume form satisfies $$d\m a d\m b \m a \m b \m r^{-4(d-1)}=d\m r'd\m \rho_0 \m r' \rho_0.$$ We then obtain
	\begin{align}\label{eq_bound_on_I_d-1}
		I_{d-1}(t) &\leq \int\limits_{t}^{\infty} d\m r \m r^{d^2-d-1} e^{-\kappa_d\m r^d} \int\limits_{0}^{\m r^d} d\m b \int\limits_{\m b}^{\infty} d \m a \left(1-\left(\frac{\m b}{\m r^d}\right)^2\right)^{\frac{d^2-d-2}{2}} \left(1+\left(\frac{\m a}{\m r ^d}\right)^2-\left(\frac{\m b}{\m r^d}\right)^2\right)^{\frac{d-1}2} e^{-\frac{\kappa_{d-1}}{2d}\m a -\frac{\kappa_{d-1}}{2d}\m b} \nonumber
		\\
		&\leq C \int\limits_{t}^{\infty} d\m r \m r^{d^2-d-1} e^{-\kappa_d\m r^d} \sim C t^{d^2-2d} e^{-\kappa_d t^d} = o\left(t^{d^2-d} e^{-\kappa_d t^d}\right) \text{ as } t\rightarrow\infty.
	\end{align}
	\\~\\
	\noindent\textbf{Step 5. Analytical treatment of \(I_{k}(t)\) for \(1\leq k\leq d-2\), see \eqref{eq_definition_I_k}}
	\\~\\
	For \(k\in\lbrace1,...,d-2\rbrace\), \(\Sp_{Q_0^\perp}\) is a \((d-k-1)\)-dimensional unit sphere. To estimate the inner integral with respect to \(\m u\), we substitute the variable \(\m u\) with
	\begin{equation}\label{eq_definition_z}
		\m z = \langle \m u,e_d \rangle \text{ and } \m s = \frac{\m u- \langle \m u,e_d \rangle e_d}{\|\m u- \langle \m u,e_d \rangle e_d\|},
	\end{equation}
	whose volume form transforms as
	\begin{equation*}
		d\m u = d\m s d\m z \; (1-\m z^2)^{\frac{d-k-3}{2}}
	\end{equation*}
	where \(d\m s\) denotes the surface measure on the \((d-k-2)\)-dimensional unit sphere. Since \(\m s\) does not appear in the integrand, we can obviously integrate with respect to \(\m s\), which only modifies the constant in front of the integral. Moreover, the condition \(\delta^2\geq(\m r')^2-\m r^2\) rewrites as \(\m z\leq\frac{\sqrt{\m r^2-\rho_0^2}}{\sqrt{(\m r')^2-\rho_0^2}}\), hence
%	\Comment{PC: $(r')^2$ should read $(r')^d$. OK?}
	\begin{align}\label{eq_I_k_as_sum}
		I_k(t) & \leq C \int\limits_{t}^{\infty} d\m r \int\limits_{\m r}^{\infty} d\m r' \int\limits_{0}^{\m r} d\rho_0 \rho_0^{kd-1} (\m r\m r')^{(d-k)(d-1)+1} \left((\m r^2-\rho_0^2)((\m r')^2-\rho_0^2)\right)^{\frac{d-k-2}{2}} e^{-\frac{\kappa_d}{2}\m r^d-\frac{\kappa_d}{2}(\m r')^d}\nonumber
		\\
		& \hspace{2cm} \int\limits_{-1}^{\frac{\sqrt{\m r^2-\rho_0^2}}{\sqrt{(\m r')^2-\rho_0^2}}} d\m z (1-\m z)^{\frac{d-k-3}{2}} e^{-\frac{\kappa_{d-1}}{2d}\m r^{d-1}\sqrt{(\m r')^2-\rho_0^2+\m r^2-\rho_0^2-2\sqrt{(\m r')^2-\rho_0^2}\sqrt{\m r^2-\rho_0^2}\m z}}\nonumber
		\\
		& \leq C\left(I_k^{(A)}(t)+I_k^{(B)}(t)+I_k^{(C)}(t)\right)
	\end{align}
	where 
	\begin{align}
		I_k^{(A)}(t) & = \int\limits_{t}^{\infty} d\m r \int\limits_{\m r}^{\infty} d\m r' \int\limits_{0}^{\m r} d\rho_0 \rho_0^{kd-1} (\m r\m r')^{(d-k)(d-1)+1} \left((\m r^2-\rho_0^2)((\m r')^2-\rho_0^2)\right)^{\frac{d-k-2}{2}} e^{-\frac{\kappa_d}{2}\m r^d-\frac{\kappa_d}{2}(\m r')^d}\notag
		\\
		& \hspace{0.5cm} \int\limits_{-1}^{M(\m r,\m r',\rho_0)} d\m z (1-\m z^2)^{\frac{d-k-3}{2}} e^{-\frac{\kappa_{d-1}}{2d}\m r^{d-1}\sqrt{(\m r')^2-\rho_0^2+\m r^2-\rho_0^2-2\sqrt{(\m r')^2-\rho_0^2}\sqrt{\m r^2-\rho_0^2}\m z}},\label{eq:IkA}
		\\
		I_k^{(B)}(t) & = \int\limits_{t}^{\infty} d\m r \int\limits_{\m r}^{\infty} d\m r' \int\limits_{0}^{\m r} d\rho_0 \rho_0^{kd-1} (\m r\m r')^{(d-k)(d-1)+1} \left((\m r^2-\rho_0^2)((\m r')^2-\rho_0^2)\right)^{\frac{d-k-2}{2}} e^{-\kappa_d\m r^d}\nonumber
		\\
		& \hspace{0.5cm} \int\limits_{M(\m r,\m r',\rho_0)}^{\frac{\sqrt{\m r^2-\rho_0^2}}{\sqrt{(\m r')^2-\rho_0^2}}} d\m z 1_{\m r^2-\rho_0^2>\frac{1}{\m r^{2d-2-\varepsilon}}} (1-\m z^2)^{\frac{d-k-3}{2}} e^{-\frac{\kappa_{d-1}}{2d}\m r^{d-1}\sqrt{(\m r')^2-\rho_0^2+\m r^2-\rho_0^2-2\sqrt{(\m r')^2-\rho_0^2}\sqrt{\m r^2-\rho_0^2}\m z}},\label{eq:IkB}
		\\
		I_k^{(C)}(t) & = \int\limits_{t}^{\infty} d\m r \int\limits_{\m r}^{\infty} d\m r' \int\limits_{0}^{\m r} d\rho_0 \rho_0^{kd-1} (\m r\m r')^{(d-k)(d-1)+1} \left((\m r^2-\rho_0^2)((\m r')^2-\rho_0^2)\right)^{\frac{d-k-2}{2}} e^{-\kappa_d\m r^d}\notag
		\\
		& \hspace{0.5cm} \int\limits_{M(\m r,\m r',\rho_0)}^{\frac{\sqrt{\m r^2-\rho_0^2}}{\sqrt{(\m r')^2-\rho_0^2}}} d\m z 1_{\m r^2-\rho_0^2\leq\frac{1}{\m r^{2d-2-\varepsilon}}} (1-\m z^2)^{\frac{d-k-3}{2}} e^{-\frac{\kappa_{d-1}}{2d}\m r^{d-1}\sqrt{(\m r')^2-\rho_0^2+\m r^2-\rho_0^2-2\sqrt{(\m r')^2-\rho_0^2}\sqrt{\m r^2-\rho_0^2}\m z}},\label{eq:IkC}
		\end{align}
		where \(\varepsilon>0\) is a small positive constant and
		\begin{equation}\label{eq:defM}
		    M(\m r,\m r',\rho_0) = \max\left(\frac{\sqrt{\m r^2-\rho_0^2}}{\sqrt{(\m r')^2-\rho_0^2}}-\frac{1}{\m r^{2d-2-\varepsilon}\sqrt{(\m r')^2-\rho_0^2}\sqrt{\m r^2-\rho_0^2}},-1\right).
	    \end{equation}
		Let us notice that we use the inequality	 \(e^{-\frac{\kappa_{d}}{2}\m r^d-\frac{\kappa_d}{2}(\m r')^d}\leq e^{-\kappa_{d}\m r^d}\) in order to derive the integrals \(I_k^{(B)}(t)\) and \(I_k^{(C)}(t)\) given above. 
		
		In the lines below, we explain how to estimate each of the three integrals $I_k^{(A)}$, $I_k^{(B)}$ and $I_k^{(C)}$ defined at \eqref{eq:IkA}, \eqref{eq:IkB} and \eqref{eq:IkC} respectively.\\
		
		\noindent{\bf Upper bound for $I_k^{(A)}(t)$, see \eqref{eq:IkA}}
		
		The integral $I_k^{(A)}(t)$ is expected to be the smallest of the three. Indeed, the estimation below will show that the integral corresponds to the geometric situation when the two balls ${\mathcal B}$ and ${\mathcal B}'$ have an intersection small enough so that the volume of the union ${\mathcal B}\cup{\mathcal B}'$ slightly exceeds by some power of $t$ the volume of one ball of radius $t$. 
		
		We start by noticing that it is enough to estimate the inner integral with respect to \(\m z\) in $I_k^{(A)}(t)$ when $M(r,r',\rho_0)=\frac{\sqrt{\m r^2-\rho_0^2}}{\sqrt{(\m r')^2-\rho_0^2}}-\frac{1}{\m r^{2d-2-\varepsilon}\sqrt{(\m r')^2-\rho_0^2}\sqrt{\m r^2-\rho_0^2}}$. In this case, we bound the exponential function by its maximal value \(e^{-\frac{\kappa_{d-1}}{2d}\m r^{d-1}\sqrt{(\m r')^2-\m r^2+2\frac{1}{\m r^{2d-2-\varepsilon}}}}\), which is attained when \(\m z=\frac{\sqrt{\m r^2-\rho_0^2}}{\sqrt{(\m r')^2-\rho_0^2}}-\frac{1}{\m r^{2d-2-\varepsilon}\sqrt{(\m r')^2-\rho_0^2}\sqrt{\m r^2-\rho_0^2}}\). We then bound the integral of the remaining factor \((1-\m z^2)^{\frac{d-k-3}{2}}\) by a constant. With this strategy, we obtain
			\begin{align}\label{eq_int_z_I_k_A}
				&\int\limits_{-1}^{M(\m r,\m r',\rho_0)} d\m z (1-\m z^2)^{\frac{d-k-3}{2}} e^{-\frac{\kappa_{d-1}}{2d}\m r^{d-1}\sqrt{(\m r')^2-\rho_0^2+\m r^2-\rho_0^2-2\sqrt{(\m r')^2-\rho_0^2}\sqrt{\m r^2-\rho_0^2}\m z}} \nonumber
				\\
				&\leq C e^{-\frac{\kappa_{d-1}}{2d}\m r^{d-1}\sqrt{(\m r')^2-\m r^2+2\frac{1}{\m r^{2d-2-\varepsilon}}}} \leq C e^{-\frac{\kappa_{d-1}}{2d}\m r^{d-1}\sqrt{\frac{2}{\m r^{2d-2-\varepsilon}}}} 
				%\leq C e^{-\frac{\kappa_{d-1}}{\sqrt{2}d}\m r^{\frac{\varepsilon}{2}}} 
				\leq C e^{-\frac{\kappa_{d-1}}{\sqrt{2}d}t^{\frac{\varepsilon}{2}}}
			\end{align}
			where we used \((\m r')^2-\m r^2\geq 0\) and \(\m r\geq t\) in the last two inequalities.
			%\Comment{PC: one intermediary ineq removed.}
			We now insert the bound \eqref{eq_int_z_I_k_A} into the expression of \(I_k^{(A)}(t)\) at \eqref{eq:IkA} and evaluate the resulting expression when using the change of variables \(\m h = \frac{\rho_0}{\m r}\) whose volume form transforms as \(d\rho_0 = d \m h \m r\).
			\begin{align}\label{eq_bound_on_I_k_A}
				& I_k^{(A)}(t)\notag
				\\
				& \leq C e^{-\frac{\kappa_{d-1}}{\sqrt{2}d}t^{\frac{\varepsilon}{2}}} \int\limits_{t}^{\infty} d\m r \int\limits_{\m r}^{\infty} d\m r' \int\limits_{0}^{\m r} d\rho_0 \rho_0^{kd-1} (\m r\m r')^{(d-k)(d-1)+1} \left((\m r^2-\rho_0^2)((\m r')^2-\rho_0^2)\right)^{\frac{d-k-2}{2}} e^{-\frac{\kappa_d}{2}\m r^d-\frac{\kappa_d}{2}(\m r')^d} \nonumber
				\\
				& = C e^{-\frac{\kappa_{d-1}}{\sqrt{2}d}t^{\frac{\varepsilon}{2}}}
				\int\limits_{t}^{\infty} d\m r 
				\int\limits_{\m r}^{\infty} d\m r' 
				\int\limits_{0}^{1} d\m h \m r^{kd} 
				\m h^{kd-1} (\m r\m r')^{(d-k)(d-1)+1} \left(\m r^2(1-\m h^2)((\m r')^2-(\m r\m h)^2)\right)^{\frac{d-k-2}{2}} e^{-\frac{\kappa_d}{2}(\m r^d+(\m r')^d)}\nonumber
				\\
				& \leq C e^{-\frac{\kappa_{d-1}}{\sqrt{2}d}t^{\frac{\varepsilon}{2}}}
				\int\limits_{t}^{\infty} d\m r \m r^{d^2-1} e^{-\frac{\kappa_d}{2}\m r^d}
				\int\limits_{\m r}^{\infty} d\m r' (\m r')^{d^2-dk-1} e^{-\frac{\kappa_d}{2}(\m r')^d}\nonumber
				\\
				& \sim C e^{-\frac{\kappa_{d-1}}{\sqrt{2}d}t^{\frac{\varepsilon}{2}}}
				\int\limits_{t}^{\infty} d\m r \m r^{2d^2-dk-d-1} e^{-\kappa_d\m r^d} \sim C t^{2d^2-dk-2d}
				e^{-\kappa_d t^d-\frac{\kappa_{d-1}}{\sqrt{2}d}t^{\frac{\varepsilon}{2}}}
				 =o\left(t^{d^2-d}e^{-\kappa_{d}t^d}\right) \text{ as } t\rightarrow\infty,
			\end{align}
%			\Comment{PC: exponent of $r$ correct? I would say $2d^2-dk-1$.}
			where we use twice the estimate $\displaystyle\int_t^{\infty}{x}^{\alpha}e^{-{x}^d}d{x}\sim \frac1{d}t^{\alpha+1-d}e^{-t^d}$ when $t\to\infty$, for fixed $\alpha$.\\
			
			\noindent{\bf Upper bound for $I_k^{(B)}(t)$, see \eqref{eq:IkB}}
			
            We start by bounding the inner integral with respect to \(\m z\) in $I_k^{(B)}(t)$. We remark that under the condition \(\lbrace\m r^2-\rho_0^2>\frac{1}{\m r^{2d-2-\varepsilon}}\rbrace\) guaranteed by the indicator function, the quantity \(M(\m r, \m r',\rho_0)\) defined at \eqref{eq:defM} satisfies
			\begin{equation}\label{eq_90}
			    %\text{under } \lbrace\m r^2-\rho_0^2>\frac{1}{\m r^{2d-2-\varepsilon}}\rbrace \text{ we have } 
			    M(\m r, \m r',\rho_0) = \frac{\sqrt{\m r^2-\rho_0^2}}{\sqrt{(\m r')^2-\rho_0^2}} - \frac{1}{\m r^{2d-2-\varepsilon}\sqrt{\m r^2-\rho_0^2}\sqrt{(\m r')^2-\rho_0^2}}.
			\end{equation}
			Moreover, we bound the exponential function by its maximal value \(e^{-\frac{\kappa_{d-1}}{2d}\m r^{d-1}\sqrt{(\m r')^2-\m r^2}}\), attained when \(\m z=\frac{\sqrt{\m r^2-\rho_0^2}}{\sqrt{(\m r')^2-\rho_0^2}}\). 
			
			In order to estimate the factor \((1-\m z^2)^{\frac{d-k-3}{2}}\), we need to consider two cases.\\~\\
		$\bullet$ For \(k\in\lbrace1,...,d-3\rbrace\), the function \(\m z\mapsto(1-\m z^2)^{\frac{d-k-3}{2}}\) is decreasing, so we bound it by its maximal value, which is attained at the minimal value of \(\m z\), that is \(\m z = \frac{\sqrt{\m r^2-\rho_0^2}}{\sqrt{(\m r')^2-\rho_0^2}}-\frac{1}{\m r^{2d-2-\varepsilon}\sqrt{\m r^2-\rho_0^2}\sqrt{(\m r')^2-\rho_0^2}}\). Explicitly, this gives
				\begin{align}\label{eq_91}
				(1-\m z^2)^{\frac{d-k-3}{2}} & \leq(1-\frac{\m r^2-\rho_0^2}{(\m r')^2-\rho_0^2}+\frac{2}{\m r^{2d-2-\varepsilon} \sqrt{\m r^2-\rho_0^2} \sqrt{(\m r')^2-\rho_0^2} })^{\frac{d-k-3}{2}} \nonumber
				\\
				&= \frac{1}{\left(r^{2d-2-\varepsilon} \sqrt{\m r^2-\rho_0^2} \sqrt{(\m r')^2-\rho_0^2}\right)^{\frac{d-k-3}{2}}}
				\left(\frac{\m r^{2d-2}((\m r')^2-\m r^2)\sqrt{\m r^2-\rho_0^2}}{\m r^\varepsilon\sqrt{(\m r')^2-\rho_0^2}}+2\right)^{\frac{d-k-3}{2}}.
				\end{align}
				Subsequently, the integral with respect to \(\m z\) is bounded by the product of \eqref{eq_91} with the length of the interval of integration, which, thanks to \eqref{eq_90}, is \(\left(\m r^{2d-2-\varepsilon}\sqrt{\m r^2-\rho_0^2}\sqrt{(\m r')^2-\rho_0^2}\right)^{-1}\). We deduce that
				\begin{align*}
					& \int\limits_{M(\m r, \m r',\rho_0)}^{\frac{\sqrt{\m r^2-\rho_0^2}}{\sqrt{(\m r')^2-\rho_0^2}}} d\m z 1_{\m r^2-\rho_0^2>\frac{1}{\m r^{2d-2-\varepsilon}}} (1-\m z^2)^{\frac{d-k-3}{2}} e^{-\frac{\kappa_{d-1}}{2d}\m r^{d-1}\sqrt{(\m r')^2-\rho_0^2+\m r^2-\rho_0^2-2\sqrt{(\m r')^2-\rho_0^2}\sqrt{\m r^2-\rho_0^2}\m z}}
					\\
					& \leq \frac{1}{\left(\m r^{2d-2-\varepsilon} \sqrt{\m r^2-\rho_0^2} \sqrt{(\m r')^2-\rho_0^2}\right)^{\frac{d-k-3}{2}+1}}
					\left(\frac{\m r^{2d-2}((\m r')^2-\m r^2)\sqrt{\m r^2-\rho_0^2}}{\m r^\varepsilon \sqrt{(\m r')^2-\rho_0^2}}+2\right)^{\frac{d-k-3}{2}} \hspace{-5mm} e^{-\frac{\kappa_{d-1}}{2d}\m r^{d-1}\sqrt{(\m r')^2-\m r^2}}.
				\end{align*}
				We now insert the bound above into the expression for \(I_k^{(B)}(t)\) at \eqref{eq:IkB} and treat the resulting expression with the change of variable \(\m a = \m r^{d-1}\sqrt{(\m r')^2-\m r^2}\) and \(\m h = \frac{\rho_0}{\m r}\), whose volume form satisfies \(d\m a d\m h \m a \m r^{-2(d-1)+1} = d\m r' d\rho_0 \m r'\). We then obtain
				\begin{align}\label{eq_bound_on_I_k_B_1}
					I_k^{(B)}(t) &\leq C \int\limits_{t}^{\infty} d\m r \int\limits_{\m r}^{\infty} d\m r' \int\limits_{0}^{\m r} d\rho_0 \rho_0^{kd-1} \m r^{d+\frac{\varepsilon}{2}(d-k-1)}(\m r')^{(d-k)(d-1)+1} \left((\m r^2-\rho_0^2)((\m r')^2-\rho_0^2)\right)^{\frac{d-k-3}{4}} e^{-\kappa_d\m r^d} \nonumber
					\\
					&\hspace{2cm}\left(\frac{\m r^{2d-2}((\m r')^2-\m r^2)\sqrt{\m r^2-\rho_0^2}}{\m r^\varepsilon\sqrt{(\m r')^2-\rho_0^2}}+2\right)^{\frac{d-k-3}{2}} e^{-\frac{\kappa_{d-1}}{2d}\m r^{d-1}\sqrt{(\m r')^2-\m r^2}} \nonumber
					\\
					& = C \int\limits_{t}^{\infty} d\m r \m r^{d^2-d+\frac{\varepsilon}{2}(d-k-1)-1 } e^{-\kappa_{d}\m r^d} \int\limits_{0}^{\infty} d\m a \int\limits_{0}^{1} d\m h \m a \m h^{kd-1} \left(1+\left(\frac{\m a}{\m r^d}\right)^2\right)^{\frac{(d-k)(d-1)}{2}} \nonumber
					\\
					& \hspace{2cm}\left((1-\m h^2)\left(1+\left(\frac{\m a}{\m r^d}\right)^2-\m h^2\right)\right)^{\frac{d-k-3}{4}} \left(\frac{\m a^2\sqrt{1-\m h^2}}{\m r^\varepsilon\sqrt{1+\left(\frac{\m a}{\m r^d}\right)^2-\m h^2}}+2\right)^{\frac{d-k-3}{2}} e^{-\frac{\kappa_{d-1}}{2d}\m a}\nonumber
					\\
					& \leq C \int\limits_{t}^{\infty} d\m r \m r^{d^2-d+\frac{\varepsilon}{2}(d-k-1)-1 } e^{-\kappa_{d}\m r^d} \sim C t^{d^2-2d+\frac{\varepsilon}{2}(d-k-1)} e^{-\kappa_{d} t^d} = o\left(t^{d^2-d}e^{-\kappa_{d} t^d}\right) \text{ as } t\rightarrow\infty,
				\end{align}
				where we use in the last line the fact that both the integrals with respect to $\m a$ and $\m h$ are finite and uniformly bounded with respect to $t$.\\~\\
				$\bullet$ For \(k=d-2\), the function \(\m z\mapsto(1-\m z^2)^{\frac{d-k-3}{2}}=(1-\m z^2)^{-\frac{1}{2}}\) is increasing, so we bound it by its maximal value \(\frac{\sqrt{(\m r')^2-\rho_0^2}}{\sqrt{(\m r')^2-\m r^2}}\) attained when \(\m z = \frac{\sqrt{\m r^2-\rho_0^2}}{\sqrt{(\m r')^2-\rho_0^2}}\). As in the previous case, \eqref{eq_90} guarantees that the length of the interval of integration is equal to \(\left(\m r^{2d-2-\varepsilon}\sqrt{\m r^2-\rho_0^2}\sqrt{(\m r')^2-\rho_0^2}\right)^{-1}\). Consequently, we obtain
				\begin{align*}
					&\int\limits_{M(\m r,\m r',\rho_0)}^{\frac{\sqrt{\m r^2-\rho_0^2}}{\sqrt{(\m r')^2-\rho_0^2}}} d\m z 1_{\m r^2-\rho_0^2>\frac{1}{\m r^{2d-2-\varepsilon}}} (1-\m z^2)^{-\frac{1}{2}} e^{-\frac{\kappa_{d-1}}{2d}\m r^{d-1}\sqrt{(\m r')^2-\rho_0^2+\m r^2-\rho_0^2-2\sqrt{(\m r')^2-\rho_0^2}\sqrt{\m r^2-\rho_0^2}\m z}}
					\\
					&\hspace{3cm}\leq \frac{1}{\m r^{2d-2-\varepsilon}\sqrt{(\m r')^2-\m r^2}\sqrt{\m r^2-\rho_0^2}} e^{-\frac{\kappa_{d-1}}{2d}\m r^{d-1}\sqrt{(\m r')^2-\m r^2}},
				\end{align*}
				where we use again in the last inequality the fact that the exponential factor is bounded by its maximal value along the interval of integration, see the discussion following \eqref{eq_90}. 
				
				As we did in the case \(k\in\lbrace1,...,d-3\rbrace\), we insert this bound in the expression for \(I_k^{(B)}(t)\) and treat it with the same change of variables \(\m a = \m r^{d-1}\sqrt{(\m r')^2-\m r^2}\) and \(\m h = \frac{\rho_0}{\m r}\). Thus, we obtain %\Comment{Ceci: checked, the power of r is correct}
				\begin{align}\label{eq_bound_on_I_k_2}
					I_{d-2}^{(B)}(t)&\leq C \int\limits_{t}^{\infty} d\m r \int\limits_{\m r}^{\infty} d\m r' \int\limits_{0}^{\m r} d\rho_0 \rho_0^{d^2-2d-1} (\m r \m r')^{2d-1} e^{-\kappa_{d}\m r^d} \frac{1}{\m r^{2d-2-\varepsilon}\sqrt{(\m r')^2-\m r^2}\sqrt{\m r^2-\rho_0^2}} e^{-\frac{\kappa_{d-1}}{2d}\m r^{d-1}\sqrt{(\m r')^2-\m r^2}} \nonumber
					\\
					& = C \int\limits_{t}^{\infty} d\m r \m r^{d^2-d-1+\varepsilon} e^{-\kappa_{d}\m r^d} \int\limits_{0}^{\infty} d\m a \int\limits_{0}^{1} d\m h \frac{ \m h^{d^2-2d-1} }{\sqrt{1-\m h^2}} \left(1+\left(\frac{\m a}{\m r^d}\right)^2\right)^{d-1} e^{-\frac{\kappa_{d-1}}{2d}\m a} \nonumber
					\\
					& \leq C \int\limits_{t}^{\infty} d\m r \m r^{d^2-d-1+\varepsilon} e^{-\kappa_{d}\m r^d} \sim t^{d^2-2d+\varepsilon} e^{-\kappa_{d} t^d} =o\left(t^{d^2-d} e^{-\kappa_{d} t^d}\right) \text{ as } t\rightarrow\infty.
				\end{align}
			
			\noindent{\bf Upper bound for $I_k^{(C)}(t)$, see \eqref{eq:IkC}}
			
			As for $I_k^{(A)}(t)$ and $I_k^{(B)}(t)$, we start by bounding the inner integral with respect to \(\m z\) in $I_k^{(C)}(t)$. Again, we bound the exponential factor by its maximal value which is the same as in the case of $I_k^{(B)}(t)$. Moreover, we bound the integral of the remaining factor \((1-\m z^2)^{\frac{d-k-3}{2}}\) by a constant. This gives
			\begin{align}
				& \int\limits_{M(\m r, \m r',\rho_0^2)}^{\frac{\sqrt{\m r^2-\rho_0^2}}{\sqrt{(\m r')^2-\rho_0^2}}} d\m z 1_{\m r^2-\rho_0^2\leq\frac{1}{\m r^{2d-2-\varepsilon}}} (1-\m z^2)^{\frac{d-k-3}{2}} e^{-\frac{\kappa_{d-1}}{2d}\m r^{d-1}\sqrt{(\m r')^2-\rho_0^2+\m r^2-\rho_0^2-2\sqrt{(\m r')^2-\rho_0^2}\sqrt{\m r^2-\rho_0^2}\m z}} \nonumber
				\\
				& \leq C 1_{\m r^2-\rho_0^2\leq\frac{1}{\m r^{2d-2-\varepsilon}}} e^{-\frac{\kappa_{d-1}}{2d}\m r^{d-1}\sqrt{(\m r')^2-\m r^2}}.
			\end{align}
			We now insert this bound into the expression of \(I_k^{(C)}(t)\) given at \eqref{eq:IkC} and apply the condition \(\lbrace\m r^2-\rho_0^2\leq\frac{1}{\m r^{2d-2-\varepsilon}}\rbrace\) of the indicator function on both the factor \((\m r^2-\rho_0^2)^{\frac{d-k-2}{2}}\) and the domain of integration of \(\rho_0\). This produces in particular an integral over \(\rho_0\in\left[r\sqrt{1-\frac{1}{\m r^{2d-\varepsilon}}},\m r\right]\). Moreover, we apply the same change of variables as in the case of $I_k^{(B)}$, i.e. \(\m a = \m r^{d-1}\sqrt{(\m r')^2-\m r^2}\) and \(\m h = \frac{\rho_0}{\m r}\). We then derive
			\begin{align}\label{eq_bound_on_I_k_C}
			&	I_k^{(C)}(t)\notag\\& \leq C \int\limits_{t}^{\infty} d\m r \int\limits_{\m r}^{\infty} d\m r' \hspace{-0.4cm} \int\limits_{r\sqrt{1-\frac{1}{\m r^{2d-\varepsilon}}}}^{\m r} \hspace{-0.4cm} d\rho_0 \rho_0^{kd-1} (\m r\m r')^{(d-k)(d-1)+1} \left(\frac{1}{\m r^{2d-2-\varepsilon}}((\m r')^2-\rho_0^2)\right)^{\frac{d-k-2}{2}} \hspace{-0.2cm} e^{-\kappa_d\m r^d} e^{-\frac{\kappa_{d-1}}{2d}\m r^{d-1}\sqrt{(\m r')^2-\m r^2}}\nonumber
				\\
				& \leq C \int\limits_{t}^{\infty} d\m r  \m r^{d^2+\frac{\varepsilon}{2}(d-k-2)-1} e^{-\kappa_d\m r^d} \nonumber
				\\
				& \hspace{3cm} \int\limits_{0}^{\infty} d\m a \hspace{-0.4cm} \int\limits_{\sqrt{1-\frac{1}{\m r^{2d-\varepsilon}}}}^{1} \hspace{-0.4cm} d\m h \m a \m h^{kd-1} \left(1+\left(\frac{\m a}{\m r^d}\right)^2\right)^{\frac{(d-k)(d-1)}{2}} \left(1+\left(\frac{\m a}{\m r^d}\right)^2-\m h^2\right)^{\frac{d-k-2}{2}} e^{-\frac{\kappa_{d-1}}{2d}\m a}\nonumber
				\\
				& \leq C \int\limits_{t}^{\infty} d\m r  \m r^{d^2+\frac{\varepsilon}{2}(d-k-2)-1} \left(1-\sqrt{1-\frac{1}{\m r^{2d-\varepsilon}}}\right) e^{-\kappa_d\m r^d} \int\limits_{0}^{\infty} d\m a \left(1+\left(\frac{\m a}{\m r^d}\right)^2\right)^{\frac{d(d-k)}{2}-1} e^{-\frac{\kappa_{d-1}}{2d}\m a} \nonumber
				\\
				&\leq \int\limits_{t}^{\infty} d\m r  \m r^{d^2-2d+\frac{\varepsilon}{2}(d-k)-1} e^{-\kappa_d\m r^d} \sim t^{d^2-3d+\frac{\varepsilon}{2}(d-k)} e^{-\kappa_d t^d} = o\left(t^{d^2-d}e^{-\kappa_d t^d}\right) \text{ as } t\rightarrow\infty
			\end{align}
			where, in the third inequality, we use \(\left(1-\sqrt{1-\frac{1}{\m r^{2d-\varepsilon}}}\right) \leq C \m r^{-2d+\varepsilon}\).\\ 
			
			\noindent{\bf Conclusion of Step 5}
			
			Finally, combining \eqref{eq_bound_on_I_k_A} \eqref{eq_bound_on_I_k_B_1}, \eqref{eq_bound_on_I_k_2} and \eqref{eq_bound_on_I_k_C} with \eqref{eq_I_k_as_sum}, we obtain for \(k\in\lbrace1,...,d-2\rbrace\),
			\begin{equation}\label{eq_bound_on_I_k}
				I_k(t) = o\left(t^{d^2-d}e^{-\kappa_{d}t^d}\right) \text{ as } t\rightarrow\infty.
			\end{equation}

	\noindent\textbf{Step 6. Analytical treatment of \(I_{0}(t)\), see \eqref{eq_definition_I_0}}
	\\~\\
	To estimate the inner integral with respect to \(\m u\in\Sp_{\R^{d}}\), we use the change of variables analogue to \eqref{eq_definition_z}, that is \(\m z = \langle \m u,e_d \rangle\) and \(\m s = \frac{\m u- \langle \m u,e_d \rangle e_d}{\|\m u- \langle \m u,e_d \rangle e_d\|}\in\Sp_{\R^{d-1}}\) whose volume form transforms as \(d\m u = d\m s d\m z \; (1-\m z^2)^{\frac{d-3}{2}}\) where \(d\m s\) denotes the surface measure on the \((d-2)\)-dimensional unit sphere. Again, since the variable \(\m s\) does not appear in the integrand, we integrate with respect to \(\m s\) and we obtain
	\begin{align}\label{eq_I_0_as_sum}
		I_0(t) & \leq C \int\limits_{t}^{\infty}d\m r  \int\limits_{\m r}^{\infty}d\m r' (\m r\m r')^{d^2-1} e^{-\frac{\kappa_d}{2}\m r^d-\frac{\kappa_d}{2}(\m r')^d } \int\limits_{-1}^{\frac{\m r}{\m r'}}d\m z \, (1-\m z^2)^{\frac{d-3}{2}} e^{- \frac{\kappa_{d-1}}{2d}\m r^{d-1}\sqrt{(\m r')^2+\m r^2-2\m r'\m r\m z}} \nonumber
		\\
		&\leq C\left(I_0^{(A)}(t)+I_0^{(B)}(t)\right)
	\end{align}
	where
	\begin{align*}
		& I_0^{(A)}(t) = \int\limits_{t}^{\infty}d\m r  \int\limits_{\m r}^{\infty}d\m r' (\m r\m r')^{d^2-1} e^{-\frac{\kappa_d}{2}\m r^d-\frac{\kappa_d}{2}(\m r')^d} \int\limits_{-1}^{\frac{\m r}{\m r'}-\frac{1}{\m r^{2d-\varepsilon}}}d\m z \, (1-\m z^2)^{\frac{d-3}{2}} e^{- \frac{\kappa_{d-1}}{2d}\m r^{d-1}\sqrt{(\m r')^2+\m r^2-2\m r'\m r\m z}}
		\\
		& I_0^{(B)}(t) = \int\limits_{t}^{\infty}d\m r  \int\limits_{\m r}^{\infty}d\m r' (\m r\m r')^{d^2-1} e^{-\kappa_d\m r^d } \int\limits_{\frac{\m r}{\m r'}-\frac{1}{\m r^{2d-\varepsilon}}}^{\frac{\m r}{\m r'}}d\m z \, (1-\m z^2)^{\frac{d-3}{2}} e^{- \frac{\kappa_{d-1}}{2d}\m r^{d-1}\sqrt{(\m r')^2+\m r^2-2\m r'\m r\m z}}
	\end{align*}
	and where \(\varepsilon>0\) is a small positive constant and to obtain \(I_0^{(B)}(t)\)  from \(I_0(t)\) we used \(e^{-\frac{\kappa_d}{2}\m r^d-\frac{\kappa_d}{2}(\m r')^d}\leq e^{-\kappa_d\m r^d}\).
	We now bound the inner integral with respect to \(\m z\) of \(I_0^{(A)}(t)\). We bound the exponential function by its greatest value \(e^{-\frac{\kappa_{d-1}}{2d}\m r^{d-1}\sqrt{(\m r')^2-\m r^2+2\frac{\m r\m r'}{\m r^{2d-\varepsilon}}}}\), which is attained for \(\m z=\frac{\m r}{\m r'}-\frac{1}{\m r^{2d-\varepsilon}}\) and the integral of the remaining factor \((1-\m z^2)^{\frac{d-3}{2}}\) by a constant. This way we obtain
	\begin{align}\label{eq_int_z_0_first}
		\int\limits_{-1}^{\frac{\m r}{\m r'}-\frac{1}{\m r^{2d-1}}}d\m z \, (1-\m z^2)^{\frac{d-3}{2}} e^{- 2\frac{\kappa_{d-1}}{2d}\m r^{d-1}\sqrt{(\m r')^2+\m r^2-2\m r'\m r\m z}} 
		& \leq C e^{-\frac{\kappa_{d-1}}{2d}\m r^{d-1}\sqrt{(\m r')^2-\m r^2+2\frac{\m r\m r'}{\m r^{2d-\varepsilon}}}} \nonumber
		\\
		& \leq C e^{-\frac{\kappa_{d-1}}{\sqrt{2}d}\m r^{d-1}\sqrt{\frac{\m r^2}{\m r^{2d-\varepsilon}}}} 
		\leq  C e^{-\frac{\kappa_{d-1}}{\sqrt{2}d}t^{\frac{\varepsilon}{2}}}
	\end{align}
	where in the last two inequalities we successively used \((\m r')^2-\m r^2\geq0\) and \(\m r^{d-1}\sqrt{\frac{\m r'\m r}{\m r^{2d-\varepsilon}}}\geq\m r^{d-1}\sqrt{ \frac{\m r^2}{\m r^{2d-\varepsilon}} }=\m r^{\frac{\varepsilon}{2}}\geq t^{\frac{\varepsilon}{2}}\). We now use the bound \eqref{eq_int_z_0_first} in \(I_0^{(A)}(t)\) and estimate the integrals with respect to \(\m r\) and \(\m r'\),
	\begin{align}\label{eq_bound_I_0_A}
		I_0^{(A)}(t) & \leq C e^{-\frac{\kappa_{d-1}}{\sqrt{2}d}t^{\frac{\varepsilon}{2}}} \int\limits_{t}^{\infty}d\m r  \int\limits_{\m r}^{\infty}d\m r' (\m r\m r')^{d^2-1} e^{-\frac{\kappa_d}{2}\m r^d -\frac{\kappa_d}{2}(\m r')^d } \nonumber
		\\
		& \sim C t^{2d^2-2d} e^{-\frac{\kappa_{d-1}}{\sqrt{2}d}t^{\frac{\varepsilon}{2}}} e^{-\kappa_d t^d} 
		= o\left(t^{d^2-d}e^{-\kappa_d t^d}\right) \text{ as } t\rightarrow\infty.
	\end{align}
	Concerning \(I_0^{(B)}(t)\), we start by estimating the inner integral with respect to \(\m z\). We bound the exponential function by its greatest value, attained for \(\m z=\frac{\m r}{\m r'}\),
	\begin{equation}\label{eq_92}
	    e^{- \frac{\kappa_{d-1}}{2d}\m r^{d-1}\sqrt{(\m r')^2+\m r^2-2\m r'\m r\m z}} \leq e^{-\frac{\kappa_{d-1}}{2d}\m r^{d-1}\sqrt{(\m r')^2-\m r^2}}.
	\end{equation}
	To complete the bound, we need to distinguish two cases.
	\\~\\
	$\bullet$ If \(d\geq 3\), the function \(\m z\mapsto(1-\m z^2)^{\frac{d-3}{2}}\) is decreasing, so we bound it with the value attained for the lower bound of \(\m z = \frac{\m r}{\m r'} - \frac{1}{\m r^{2d-\varepsilon}}\),
		\begin{align}\label{eq_93}
			(1-\m z^2)^{\frac{d-3}{2}}
			&
			\leq \left(1-\left(\frac{\m r}{\m r'}-\frac{1}{\m r^{2d-\varepsilon}}\right)^2\right)^{\frac{d-3}{2}} 
			= \left(\frac{(\m r')^2-\m r^2}{(\m r')^2} + \frac{2\m r}{\m r'\m r^{2d-\varepsilon}}-\frac{1}{\m r^{2(2d-\varepsilon)}}\right)^{\frac{d-3}{2}} \nonumber
			\\
			& \leq \left(\frac{(\m r')^2-\m r^2}{(\m r')^2}+\frac{2}{\m r^{2d-\varepsilon}}\right)^{\frac{d-3}{2}}
			= \m r^{-d^2+3d+\frac{\varepsilon}{2}(d-3)}
			\left(\frac{\m r^{2d-\varepsilon}((\m r')^2-\m r^2)}{(\m r')^2}+2\right)^{\frac{d-3}{2}}
		\end{align}
		where, in the second inequality, we bound \(-\frac{1}{\m r^{2(2d-\varepsilon)}}\leq0\) and \(\frac{\m r}{\m r'}\leq 1\). To bound the inner integral with respect to \(\m z\) of \(I^{(B)}_0(t)\), we apply the bounds \eqref{eq_93} and \eqref{eq_92}: as this fixes the integrand, integrating with respect to \(\m z\) corresponds to multiplying by the length of the interval of integration \(\frac{1}{\m r^{2d-\varepsilon}}\). Consequently, we obtain
		\begin{align*}
			&\int\limits_{\frac{\m r}{\m r'}-\frac{1}{\m r^{2d-\varepsilon}}}^{\frac{\m r}{\m r'}}d\m z \, (1-\m z^2)^{\frac{d-3}{2}} e^{- \frac{\kappa_{d-1}}{2d}\m r^{d-1}\sqrt{(\m r')^2+\m r^2-2\m r'\m r\m z}} 
			\\
			& \leq %\m r^{-d^2+3d+\frac{\varepsilon}{2}(d-3)}			\left(\frac{\m r^{2d-\varepsilon}((\m r')^2-\m r^2)}{(\m r')^2}+2\right)^{\frac{d-3}{2}} e^{-\frac{\kappa_{d-1}}{2d}\m r^{d-1}\sqrt{(\m r')^2-\m r^2}} \int\limits_{\frac{\m r}{\m r'}-\frac{1}{\m r^{2d-\varepsilon}}}^{\frac{\m r}{\m r'}}d\m z \, 1
			%\\
			%& = 
			\m r^{-d^2+d+\frac{\varepsilon}{2}(d-1)}%\frac{1}{\m r^{(2d-\varepsilon)\frac{d-3}{2}+1}} 
			\left(\frac{\m r^{2d-\varepsilon}((\m r')^2-\m r^2)}{(\m r')^2}+2\right)^{\frac{d-3}{2}} e^{-\frac{\kappa_{d-1}}{2d}\m r^{d-1}\sqrt{(\m r')^2-\m r^2}}.
		\end{align*}
		Now we use this bound in the definition of \(I_0^{(B)}(t)\) and treat the resulting expression with the change of variables \(\m a = \m r^{d-1}\sqrt{(\m r')^2-\m r^2}\) whose volume form satisfies \(d\m a \m a \m r^{-2(d-1)}=d\m r'\m r'\) to conclude
		\begin{align}\label{eq_bound_I_0_B_1}
			I_0^{(B)}(t) & \leq \int\limits_{t}^{\infty}d\m r  \int\limits_{\m r}^{\infty}d\m r' (\m r\m r')^{d^2-1} e^{-\kappa_d\m r^d } \m r^{-d^2+d+\frac{\varepsilon}{2}(d-1)} \left(\frac{\m r^{2d-\varepsilon}((\m r')^2-\m r^2)}{(\m r')^2}+2\right)^{\frac{d-3}{2}} e^{-\frac{\kappa_{d-1}}{2d}\m r^{d-1}\sqrt{(\m r')^2-\m r^2}} \nonumber
			\\
			& = \int\limits_{t}^{\infty}d\m r \m r^{d^2-d-1+\frac{\varepsilon}{2}(d-1)}e^{-\kappa_d\m r^d } \int\limits_{0}^{\infty}d\m a \m a \left(1+\left(\frac{\m a}{\m r^d}\right)^2\right)^{\frac{d^2-2}{2}} \left(\frac{\m r^{-\varepsilon} \m a^2}{ \left(1+\left(\frac{\m a}{\m r^d}\right)^2\right)}+2\right) e^{-\frac{\kappa_{d-1}}{2d}\m a} \nonumber
			\\
			& \leq C \int\limits_{t}^{\infty}d\m r \m r^{d^2-d-1+\frac{\varepsilon}{2}(d-1)}e^{-\kappa_d\m r^d } \sim t^{d^2-2d+\frac{\varepsilon}{2}(d-1)}e^{-\kappa_dt^d } = o\left( t^{d^2-d} e^{-\kappa_dt^d } \right) \text{ as } t\rightarrow\infty.
		\end{align}
		%\\~\\
		$\bullet$ If \(d=2\), we have
		\begin{equation*}
		    I_k^{(B)}(t) = \int\limits_t^\infty d\m r \int\limits_{\m r}^\infty d\m r' (\m r \m r')^3 e^{-\kappa_2\m r^2} \int\limits_{\frac{\m r}{\m r'}-\frac{1}{\m r^{4-\varepsilon}}}^{\frac{\m r}{\m r'}} \left(1-\m z^2\right)^{-\frac{1}{2}} e^{-\frac{\kappa_1}{4}\m r^2\sqrt{(\m r')^2+\m r^2-2\m r\m r' \m z}}.
		\end{equation*}
		The function \(\m z\mapsto(1-\m z^2)^{-\frac{1}{2}}\) is increasing, so we bound it with the value attained for the maximal value of \(\m z = \frac{\m r}{\m r'} \) obtaining 
		\(	(1-\m z^2)^{-\frac{1}{2}} \leq \frac{\m r'}{\sqrt{(\m r')^2-\m r^2}}\). To bound the inner integral with respect to \(\m z\) of \(I^{(B)}_0(t)\) we apply this bound combined with \eqref{eq_92}, which fixes its integrand. Then, integrating with respect to \(\m z\) corresponds to multiplying by the length of the domain of integration of \(\m z\) which is \(\m r^{-4+\varepsilon}\). This way we obtain
		\begin{equation*}
			\int\limits_{\frac{\m r}{\m r'}-\frac{1}{\m r^{2d-\varepsilon}}}^{\frac{\m r}{\m r'}}d\m z \,  (1-\m z^2)^{-\frac{1}{2}} e^{- \frac{\kappa_1}{4}\m r^{d-1}\sqrt{(\m r')^2+\m r^2-2\m r'\m r\m z}} \leq \m r^{-4+\varepsilon} \frac{\m r'}{\sqrt{(\m r')^2-\m r^2}} e^{-\frac{\kappa_1}{4}\m r \sqrt{(\m r')^2-\m r^2}}.
		\end{equation*}
		We insert this bound in the expression for \(I_0^{(B)}(t)\) and use the change of variables \(\m a = \m r \sqrt{(\m r')^2-\m r^2}\) with \(d\m a \m a \m r^{-2}=d\m r' \m r'\) to obtain,
		\begin{align}\label{eq_bound_I_0_B_2}
			I_0^{(B)}(t) & \leq \int\limits_{t}^{\infty}d\m r  \int\limits_{\m r}^{\infty}d\m r' (\m r\m r')^{3} e^{-\kappa_2\m r^2 } \frac{1}{\m r^{4-\varepsilon}} \frac{\m r'}{\sqrt{(\m r')^2-\m r^2}} e^{-\frac{\kappa_1}{4}\m r \sqrt{(\m r')^2-\m r^2}} \nonumber
			\\
			& = \int\limits_{t}^{\infty}d\m r \m r^{\varepsilon+1} e^{-\kappa_2\m r^2 } \int\limits_{0}^{\infty} d\m a \left(1+\left(\frac{\m a}{\m r^2}\right)^2\right)^{\frac{3}{2}} e^{-\frac{\kappa_1}{4} \m a} \nonumber
			\\
			& \leq C \int\limits_{t}^{\infty}d\m r \m r^{\varepsilon+1} e^{-\kappa_2\m r^2 } \sim t^\varepsilon e^{-\kappa_2 t^2 } = o\left(t^{2} e^{-\kappa_2 t^2 }\right) \text{ as } t\rightarrow\infty.
		\end{align}
		Using \eqref{eq_bound_I_0_A}, \eqref{eq_bound_I_0_B_1} and \eqref{eq_bound_I_0_B_2} in \eqref{eq_I_0_as_sum} we prove that
	\begin{equation}\label{eq_bound_on_I_0}
		I_0(t) = o\left(t^{d^2-d}e^{-\kappa_{d}t^d}\right) \text{ as } t\rightarrow\infty.
	\end{equation}
%	\Comment{Ceci: checked the computation for $I_0$, it's ok.}
	\noindent{\bf Conclusion of the proof of Proposition \ref{thm_E_pairs_negligible}}\\~\\
	Finally, we use the bounds \eqref{eq_bound_on_I_d-1}, \eqref{eq_bound_on_I_k} and \eqref{eq_bound_on_I_0} in \eqref{eq_bound_on_couple_vert_with_fixed_Q} to conclude that
	\begin{equation*}
		\E\left[\Card\left(\left(\VOMR\right)^2_{\neq}\right)\right] = o\left(t^{d^2-d}e^{-\kappa_{d} t^d}\right)  = o\left(\E\left[\Card\left(\VOMR\right)\right]\right)  \text{ as } t\rightarrow\infty.
	\end{equation*}
	\end{proof}
	
	\subsection{From expectation to probability: proof of Theorem \ref{thm_main_result}}\label{sec_proof_of_theorem}
	By definition, \(\mathcal{D}\) is the distance from \(0\) to the furthest away point of \(\CO\) and this point is a pointy vertex. Therefore, we have \(\Rcirc \geq t\) precisely when there exists at least one pointy vertex at distance larger than \(t\) from \(0\). That is, we have the equality of events
	\begin{equation*}
		\lbrace\Rcirc \geq t \rbrace = \lbrace \Card (\VOMR) \geq 1 \rbrace.
	\end{equation*}
	This characterization gives us access to the asymptotics of the distribution tail of \(\Rcirc\). Indeed,
	\begin{equation}\label{eq_proof_of_main_result}
		\E\left[\Card(\VOMR)\right] - \E\left[\Card\left((\VOMR)^2_{\neq}\right)\right] \leq \PR\left(\Rcirc \geq t\right) = \PR\left(\Card(\VOMR)\geq 1\right) \leq \E\left[\Card(\VOMR)\right].
	\end{equation}
	The inequality in the right-hand side of (\ref{eq_proof_of_main_result}) is Markov's inequality. Concerning the left-hand side inequality, we have
	\begin{equation*}
		\Card\left((\VOMR)^2_{\neq}\right) = \binom{\Card(\VOMR)}{2}
	\end{equation*}
	and therefore,
	\begin{align*}
		\E[\Card(\VOMR)] - \E\left[\Card\left((\VOMR)^2_{\neq}\right)\right] &= \sum\limits_{k=1}^\infty \PR\left(\Card(\VOMR)\geq k\right) - \sum\limits_{l=1}^\infty \PR\left(\Card\left((\VOMR)^2_{\neq}\right)\geq l\right)
		\\
		& = \PR\left(\Card(\VOMR)\geq 1\right) + \sum\limits_{k=2}^\infty \PR\left(\Card(\VOMR)\geq k\right) - \sum\limits_{l=1}^\infty \PR\left(\Card\left((\VOMR)^2_{\neq}\right)\geq l\right)
		\\
		& = \PR\left(\Card(\VOMR)\geq 1\right) - \sum\limits_{l\geq1, l\notin\lbrace\binom{k}{2}|k\in\mathbb{N}\rbrace} \PR\left(\Card\left((\VOMR)^2_{\neq}\right)\geq l\right)
		\\
		& \leq \PR\left(\Card(\VOMR)\geq 1\right).
	\end{align*}
	Finally, Proposition \ref{thm_E_pairs_negligible} shows that
	\begin{equation*}
		\E[\Card\left((\VOMR)^2_{\neq}\right)] = o(\E[\Card(\VOMR)]) \text{ as } t\rightarrow\infty
	\end{equation*}
	and therefore, (\ref{eq_proof_of_main_result}) implies that the tail probability of \(\mathcal{D}\) has the same asymptotic behavior of \(\E[\Card(\VOMR)]\). Proposition \ref{thm_E_nr_locmaxvert} provides the expectation \(\E[\Card(\VOMR)]\) up to the multiplicative constant \(C_d\). The value of \(C_d\) is given by Theorem \ref{thm_value_of_C_d}, which is proven in Section \ref{sec_volume_kingman}.
	
	\section{Mean volume of a random simplex: computation of \(C_d\) and proof of Theorem \ref{thm_value_of_C_d}}\label{sec_volume_kingman}
	In this section we compute the constant \(C_d\) of Theorem \ref{thm_value_of_C_d}, that is, the average volume of the random simplex \(\Conv\left(\m U_0,...,\m U_d\right)\) determined by \((d+1)\) i.i.d. uniform random points \(\m U_0,...,\m U_d\in\Sp_{\R^d}\) under the condition that \(\m U_1,...,\m U_d\) satisfy the geometrical pointy condition of Lemma \ref{prop_equivalent_characterization_of_VOM} with respect to \(\m U_0\):
	\begin{equation*}
		C_d = \E\left[ \Delta_{d}(\m U_0,...,\m U_d) \1_{0\in\Conv(P_{\m U_0^\perp}(\m U_1),...,P_{\m U_0^\perp}(\m U_d))} \right].
	\end{equation*}
	By means of a series of geometrical considerations in which the pointy condition plays a central role, we show in Proposition \ref{prop_recursive_relation_on_vol_full_simplex} that \(C_d\) satisfies a recurrence relation on \(d\). This relation, together with the explicit computation of \(C_2\) allows us to give an explicit value to \(C_d\).
	\begin{proposition}\label{prop_recursive_relation_on_vol_full_simplex}
		The expectation \(C_d\) given in identity (\ref{eq_C_d_as_expectation}) satisfies the recursive relation:
		\begin{equation}\label{eq_prop_recursive_formula_on_C_d}
			C_d = \frac{1}{2(d-1)} \left( \frac{B\left(\frac{d}{2},\frac{1}{2}\right)}{B\left(\frac{d-1}{2},\frac{1}{2}\right)} \right)^{d-1} C_{d-1}.
		\end{equation}
		As a consequence,
		\begin{equation}\label{eq_prop_avg_volume_simplex_explicit_value}
			C_d = 		\frac{1}{2^{d-1}\sqrt{\pi}(d-1)!}\frac{\Gamma\left(\frac{d}{2}\right)^{d}}{\Gamma\left(\frac{d+1}{2}\right)^{d-1}}.
		\end{equation}
	\end{proposition}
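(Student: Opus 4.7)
The plan is two-fold: establish the recursion \eqref{eq_prop_recursive_formula_on_C_d} via a spherical decomposition, and then iterate it from a direct computation of $C_2$ to obtain the closed form \eqref{eq_prop_avg_volume_simplex_explicit_value}. For the first step, I would fix $\m U_0$ by rotational invariance and parametrize each $\m U_i = \cos\theta_i\,\m U_0 + \sin\theta_i\,\m W_i$ for $i\ge 1$, with $\theta_i\in[0,\pi]$ and $\m W_i \in \Sp_{\R^{d-1}} := \Sp_{\m U_0^\perp}$, the uniform measure factorizing as $\sin^{d-2}\theta_i\,d\theta_i\,d\m W_i$. Since $P_{\m U_0^\perp}(\m U_i) = \sin\theta_i\,\m W_i$ with $\sin\theta_i > 0$ a.s., the pointy condition of Lemma \ref{prop_equivalent_characterization_of_VOM} reduces to $0 \in \Conv(\m W_1, \dots, \m W_d)$. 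Expanding $\det(\m U_1 - \m U_0, \dots, \m U_d - \m U_0)$ along the $\m U_0$-direction, using $(1-\cos\theta)/\sin\theta = \tan(\theta/2)$, and introducing the barycentric coordinates $(\lambda_i)_{i=1}^d$ of the origin in $\Conv(\m W_i)$, one finds on the pointy event
\[
\Delta_d(\m U_0, \dots, \m U_d) = \frac{1}{d}\Big(\prod_j \sin\theta_j\Big) \Vol_{d-1}(\Conv(\m W_1, \dots, \m W_d)) \sum_{i=1}^d \lambda_i \tan(\theta_i/2).
\]
Since $\sum_i \lambda_i = 1$ and the $\lambda_i$ depend only on the $\m W_j$, symmetric integration in the $\theta_i$ factorizes the angular part into $(d-1)$ copies of $\int_0^\pi \sin^{d-1}\theta\,d\theta = B(d/2, 1/2)$ and one copy of $\int_0^\pi \sin^{d-1}\theta \tan(\theta/2)\,d\theta = B((d-1)/2, 1/2)$ (the latter via $\sin^{d-1}\theta\,\tan(\theta/2) = \sin^{d-2}\theta\,(1-\cos\theta)$), and together with the measure normalizations this yields
\[
C_d = \frac{\alpha_d^{d-1}}{d}\,M_d, \qquad \alpha_d := \frac{B(d/2, 1/2)}{B((d-1)/2, 1/2)},
\]
with $M_d := \E[\Vol_{d-1}(\Conv(\m W_1, \dots, \m W_d)) \, \mathbf{1}_{0 \in \Conv(\m W_i)}]$ for $\m W_i$ i.i.d.\ uniform on $\Sp_{\R^{d-1}}$.

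The heart of the argument is then the geometric identity $M_d = \frac{d}{2(d-1)}\,C_{d-1}$, which combined with the previous display gives \eqref{eq_prop_recursive_formula_on_C_d}. Using that under the direct condition the simplex $\Conv(\m W_1,\dots,\m W_d)$ is the union of its $d$ sub-simplices $\Conv(0,\m W_j : j \ne k)$, symmetry in the $\m W_i$ allows one to rewrite
\[
M_d = d \cdot \E\big[\Vol_{d-1}(\Conv(0, \m W_1, \dots, \m W_{d-1})) \, \mathbf{1}_{-\m W_0 \in \mathcal{T}}\big],
\]
where $\mathcal{T}$ is the spherical simplex of $\m W_1,\dots,\m W_{d-1}$ and $\m W_0$ is an auxiliary uniform point on $\Sp_{\R^{d-1}}$. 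On the other hand the pointy condition defining $C_{d-1}$ (with axis $\m W_0$) is equivalent to $\m W_0 \in \mathcal{T} \cup (-\mathcal{T})$, so $C_{d-1}$ splits as $M_d + \widetilde M_d$ with $\widetilde M_d$ the "antipodal" contribution corresponding to $\m W_0 \in \mathcal{T}$. Parametrizing $\m W_0 = \pm v/|v|$ with $v \in \Conv(\m W_1,\dots,\m W_{d-1})$, the height of $\Conv(\m W_0,\m W_1,\dots,\m W_{d-1})$ above its base $\Conv(\m W_i)$ takes the explicit form $c(1 \mp |v|)/|v|$, where $c = \mathrm{dist}(0, \mathrm{aff}(\m W_i))$. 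Integrating $\m W_0$ first and then the $\m W_i$ via a Blaschke--Petkantschin-type change of variables in $\R^{d-1}$ produces cancellation of the $c$-dependent terms and yields the averaged ratio $\widetilde M_d / M_d = (d-2)/d$, i.e.\ $C_{d-1} = \frac{2(d-1)}{d}\,M_d$. The main obstacle is exactly this comparison: the pointwise ratio depends on the configuration (a direct check for $d=3$, $\m W_1 = (1,0)$, $\m W_2 = (0,1)$ gives $(4-\pi)/(4+\pi) \ne 1/3$), so the clean factor only emerges after averaging with the correct Jacobian.

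Finally, a direct trigonometric calculation for three i.i.d.\ uniform points on the unit circle (writing each as $(\cos\psi_i, \sin\psi_i)$ and imposing that the three arcs they cut on the circle all have length less than $\pi$) gives $C_2 = 1/\pi$, matching \eqref{eq_prop_avg_volume_simplex_explicit_value} at $d=2$. Iterating \eqref{eq_prop_recursive_formula_on_C_d} produces a telescoping product of Beta-function ratios which, using $\alpha_d = \Gamma(d/2)^2/(\Gamma((d+1)/2)\Gamma((d-1)/2))$ and Legendre's duplication formula for $\Gamma$, collapses into the closed form \eqref{eq_prop_avg_volume_simplex_explicit_value}.
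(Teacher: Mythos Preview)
Your reduction of $C_d$ to the spherical quantity $M_d=\E[\Vol_{d-1}(\Conv(\m W_1,\dots,\m W_d))\,\1_{0\in\Conv(\m W_i)}]$ is correct and is precisely what the paper does in Steps~1--3 (the paper's variables $(\m Z_{i-1},\m V_{i-1})$ are your $(-\cos\theta_i,\m W_i)$, and your barycentric expansion is equivalent to the paper's identity \eqref{eq_Cd_end_of_step2}). Your characterisation of the $(d-1)$-dimensional pointy event as $\m W_0\in\mathcal T\cup(-\mathcal T)$ is also right, and the split $C_{d-1}=M_d+\widetilde M_d$ with $\widetilde M_d=\E[\Delta_{d-1}(\m W_0,\dots,\m W_{d-1})\1_{\m W_0\in\mathcal T}]$ is fine.

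The gap is your justification of $\widetilde M_d/M_d=(d-2)/d$. Invoking a Blaschke--Petkantschin change of variables to ``produce cancellation of the $c$-dependent terms'' is not what is needed, and you do not carry it out. In fact the identity follows from your own height formula much more directly: using $c\int_{\Conv(\m W_i)}|v|^{-(d-1)}\,dA(v)=|\mathcal T|$ (the solid-angle relation) and $c\cdot\mathrm{base}=(d-1)\,\Delta_{d-1}(0,\m W_1,\dots,\m W_{d-1})$, your parametrisation gives
\[
M_d-\widetilde M_d \;=\; 2\,\E\big[\Delta_{d-1}(0,\m W_1,\dots,\m W_{d-1})\,\1_{\m W_0\in\mathcal T}\big]
\;=\; 2\,\E\big[\Delta_{d-1}(0,\m W_1,\dots,\m W_{d-1})\,\1_{-\m W_0\in\mathcal T}\big]
\;=\;\tfrac{2}{d}M_d,
\]
where the second equality is antipodal symmetry $\m W_0\mapsto-\m W_0$ and the third is your own identity $M_d=d\,\E[\Delta_{d-1}(0,\m W_1,\dots,\m W_{d-1})\1_{-\m W_0\in\mathcal T}]$. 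No integral-geometric formula is required. This antipodal-symmetry step is exactly the mechanism behind the paper's argument (there it appears as ``the signed term has zero expectation'' in \eqref{eq_signed_sum}, together with the $V_0\mapsto -V_0$ trick in \eqref{eq_Cd_sum_of_spherical_indicators}--\eqref{eq_Cd_resolution_of_sum_of_indicators}); the paper packages it differently but the content is the same. Once you replace the Blaschke--Petkantschin hand-wave by the three-line computation above, your proof goes through and is essentially a reparametrised version of the paper's.
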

	\noindent{\bf Remark.} This result follows Miles' calculation of the moments of the volume of a random simplex in the unit ball. Indeed, in \cite{Miles}, he obtains in particular
	\begin{equation}\label{eq_Miles}
		\E\left[\Delta_{d}(\m U_0,...,\m U_d)\right] = \frac{1}{d!} \frac{\Gamma\left(\frac{d^2+1}{2}\right)}{\Gamma\left(\frac{d^2}{2}\right)} \frac{\Gamma\left(\frac{d}{2}\right)}{\Gamma\left(\frac{1}{2}\right)}
		\left(\frac{\Gamma\left(\frac{d}{2}\right)}{\Gamma\left(\frac{d+1}{2}\right)}\right)^d.
	\end{equation}
	His method provides additionally all the moments, which is not possible in our situation with the constraint given by the indicator function. We assert that the mean of the volume of the random simplex conditional on the event \(\{0\in\Conv(P_{\m U_0^\perp}(\m U_1),...,P_{\m U_0^\perp}(\m U_d))\}\) is  larger than the right-hand side of \eqref{eq_Miles}. This follows from the calculation of \(C_d\) in Proposition \ref{prop_recursive_relation_on_vol_full_simplex} and the calculation of the probability for the pointy condition given by Wendel's formula \cite{Wendel}, i.e.
	\(\PR(0\in\Conv(P_{\m U_0^\perp}(\m U_1),...,P_{\m U_0^\perp}(\m U_d)))=2^{-(d-1)}\).
	We then obtain for any \(d\geq 1\)
	\begin{align*}
	& \frac{\E\left[\Delta_{d}(\m U_0,...,\m U_d)\,|\,0\hspace{-1mm}\in\hspace{-1mm}\Conv\left(P_{\m U_0^\perp}(\m U_1),...,P_{\m U_0^\perp}(\m U_d)\right)\right]}{\E\left[\Delta_{d}(\m U_0,...,\m U_d)\right]}
	\\
	& =
	\frac{C_d}{\PR\left(0\hspace{-1mm}\in\hspace{-1mm}\Conv\left(P_{\m U_0^\perp}(\m U_1),...,P_{\m U_0^\perp}(\m U_d)\right)\right)\E\left[\Delta_{d}(\m U_0,...,\m U_d)\right]}
	\\
	& =\frac{d\Gamma(\frac{d^2}2)\Gamma(\frac{d+1}2)}{\Gamma(\frac{d^2+1}2)\Gamma(\frac{d}2)}\geq 1.
	\end{align*}
\begin{proof}
	We start by writing explicitly the expectations hidden behind the constants \(C_d\) and \(C_{d-1}\) in statement (\ref{eq_prop_recursive_formula_on_C_d}). Let \(\m U_0,...,\m U_d\) be i.i.d. uniform random variables on the \((d-1)\)-dimensional unit sphere \(\Sp_{\R^d}\) and \(\m V_0,...,\m V_{d-1}\) be i.i.d. uniform random variables on the \((d-2)\)-dimensional unit sphere \(\Sp_{\R^{d-1}}\). Our goal is to prove that \begin{align}\label{eq_prop_avg_volume_simplex_recursive_formula}
		& \E\left[\Delta_{d}(\m U_0,...,\m U_d) \1_{0\in\Conv(P_{\m U_0^\perp}(\m U_1),...,P_{\m U_0^\perp}(\m U_d))}\right] \nonumber
		\\
		& = \frac{1}{2(d-1)} \left( \frac{B\left(\frac{d}{2},\frac{1}{2}\right)}{B\left(\frac{d-1}{2},\frac{1}{2}\right)} \right)^{d-1} \E\left[\Delta_{d-1}(\m V_0,...,\m V_{d-1}) \1_{0\in\Conv(P_{\m V_0^\perp}(\m V_1),...,P_{\m V_0^\perp}(\m V_{d-1}))}\right].
	\end{align}
	\noindent\textbf{Step 1. Represent \(\m U_1,...,\m U_d\) with cylindrical coordinates.}
	\\~\\
	As in Figure \ref{fig_cylindrical_coordinates}, we express \(\m U_1,...,\m U_d\) in cylindrical coordinates with respect to the axis of \(\m U_0\):
	\begin{equation}\label{eq_cylindrical_coordinates}
		\m Z_{i-1} = \sqrt{1-\|P_{\m U_0^\perp}(\m U_i)\|^2} \text{ and } \m V_{i-1} = \frac{P_{\m U_0^\perp}(\m U_i)}{\|P_{\m U_0^\perp}(\m U_i)\|} \text{, } i\in\lbrace1,...,d\rbrace,
	\end{equation}
	where \(\m V_{i-1}\in \m U_0^\perp\) is the unit vector on \(\m U_0^\perp\) pointing from \(0\) towards the projection of \(\m U_i\) onto \(\m U_0^\perp\) and \(\m Z_{i-1}\) is the oriented height of \(\m U_i\) over \(\m U_0\). We obtain this way \(2d\) independent random variables: \(\m V_0,...,\m V_{d-1}\) i.i.d. random variables uniformly distributed on \(\Sp_{\m U_0^\perp}\),the \((d-2)-\)dimensional unit sphere laying on \(\m U_0^\perp\), and \(\m Z_0,...,\m Z_{d-1}\) i.i.d. random variables of probability density function \(\m  z \mapsto \frac{\1_{\m z\in[-1,1]}}{B\left(\frac{d-1}{2},\frac{1}{2}\right)} (1-\m z^2)^{\frac{d-3}{2}}\). In these cylindrical coordinates,
	\begin{equation*}
		\m U_i = \sqrt{1-\m Z_{i-1}^2} \m V_{i-1} - \m Z_{i-1}\m U_0,\, i\in\lbrace1,...,d\rbrace.
	\end{equation*}
	
	\begin{figure}[h!]
		\centering
		\includegraphics[scale=1]{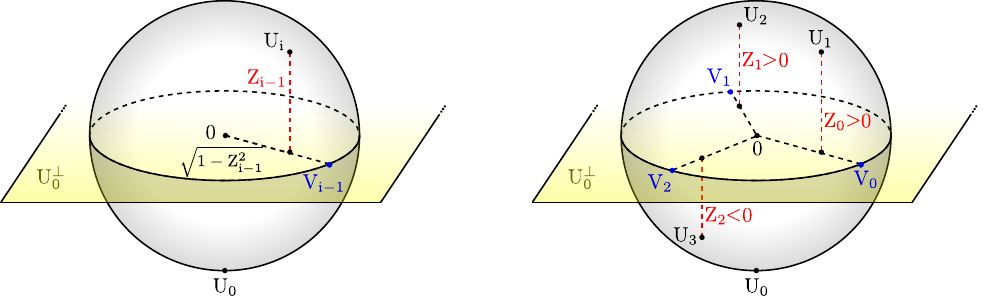}
		\caption{Representation of the cylindrical change of coordinates with respect to the axis spanned by \(\Urm_0\).}
		\label{fig_cylindrical_coordinates}
	\end{figure}
	\noindent\textbf{Step 2. Express the argument of the left-hand side of  (\ref{eq_prop_avg_volume_simplex_recursive_formula}) in the cylindrical coordinates and simplify it by means of operations on the determinant and geometrical considerations.}
	\\~\\
	With the cylindrical coordinates introduced in \text{Step 1}, the indicator function of the argument of the left-hand side of (\ref{eq_prop_avg_volume_simplex_recursive_formula}) takes the form
	\begin{equation}\label{eq_Cd_first_steps_on_pointy_condition_in_cylindrical_coord}
		 \1_{0\in\Conv(P_{\m U_0^\perp}(\m U_1),...,P_{\m U_0^\perp}(\m U_d))} = \1_{0\in\Conv(\sqrt{1-\m Z_{0}^2}\m V_0,...,\sqrt{1-\m Z_{d-1}^2}\m V_{d-1})} = \1_{0\in\Conv(\m V_0,...,\m V_{d-1})}
	\end{equation}
	where the last equality holds because the simplex \(\Conv(\sqrt{1-\m Z_{0}^2}\m V_0,...,\sqrt{1-\m Z_{d-1}^2}\m V_{d-1})\) contains the origin precisely when \(\Conv(\m V_0,...,\m V_{d-1})\) does. For what concerns the random volume on the left-hand side of (\ref{eq_prop_avg_volume_simplex_recursive_formula}), using the definition of the volume of a simplex with vertices at \(\m x_0,...,\m x_d\in\R^d\),
	\begin{equation*}
		\Delta_d(\m x_0,...,\m x_d) = \frac{1}{d!} \left| \text{det} \left(\m x_1-\m x_0,...,\m x_d-\m x_0 \right)\right|
	\end{equation*}
	and by elementary operations on the determinant of a matrix, we have the first four equalities of the following computation,
	\begin{align}\label{eq_Cd_first_steps_on_volume_in_cylindrical_coord}
		\Delta_{d}(\m U_0,...,\m U_d) \nonumber
		& = 
		\Delta_{d}\left(\m U_0,\sqrt{1-\m Z_{0}^2} \m V_{0} - \m Z_{0}\m U_0,...,\sqrt{1-\m Z_{d-1}^2} \m V_{d-1} - \m Z_{d-1}\m U_0\right) \nonumber
		\\
		& = \frac{1}{d!} \left| \text{det}\left( \sqrt{1-\m Z_{0}^2} \m V_{0} - (1+\m Z_{0})\m U_0,...,\sqrt{1-\m Z_{d-1}^2} \m V_{d-1} - (1+\m Z_{d-1})\m U_0 \right) \right| \nonumber
		\\
		& = (1+\m Z_0)\cdots(1+\m Z_{d-1}) \frac{1}{d!}  \left| \text{det}\left( \sqrt{\frac{1-\m Z_{0}}{1+\m Z_0}} \m V_{0} - \m U_0,...,\sqrt{\frac{1-\m Z_{d-1}}{1+\m Z_{d-1}}} \m V_{d-1} - \m U_0 \right) \right| \nonumber
		\\
		& = (1+\m Z_0)\cdots(1+\m Z_{d-1}) \Delta_d\left( \m U_0,\sqrt{\frac{1-\m Z_{0}}{1+\m Z_0}} \m V_{0},...,\sqrt{\frac{1-\m Z_{d-1}}{1+\m Z_{d-1}}} \m V_{d-1} \right) \nonumber
		\\
		& = (1+\m Z_0)\cdots(1+\m Z_{d-1}) \frac{\Delta_{d-1}(\sqrt{\frac{1-\m Z_{0}}{1+\m Z_0}} \m V_{0},...,\sqrt{\frac{1-\m Z_{d-1}}{1+\m Z_{d-1}}}\m V_{d-1})}{d}.
	\end{align}
	\noindent The last equality of (\ref{eq_Cd_first_steps_on_volume_in_cylindrical_coord}) holds because the simplex \(\Conv\left( \m U_0,\sqrt{\frac{1-\m Z_{0}}{1+\m Z_0}} \m V_{0},...,\sqrt{\frac{1-\m Z_{d-1}}{1+\m Z_{d-1}}} \m V_{d-1} \right)\) is a pyramid with \((d-1)\)-dimensional base \(\Conv\left(\sqrt{\frac{1-\m Z_{0}}{1+\m Z_0}} \m V_{0},...,\sqrt{\frac{1-\m Z_{d-1}}{1+\m Z_{d-1}}} \m V_{d-1} \right) \subseteq \m U_0^\perp\) and height \(1\), see Figure \ref{fig_simplex_with_base_on_U_0_perp}.
	\begin{figure}[h!]
		\centering
		\includegraphics[scale=1]{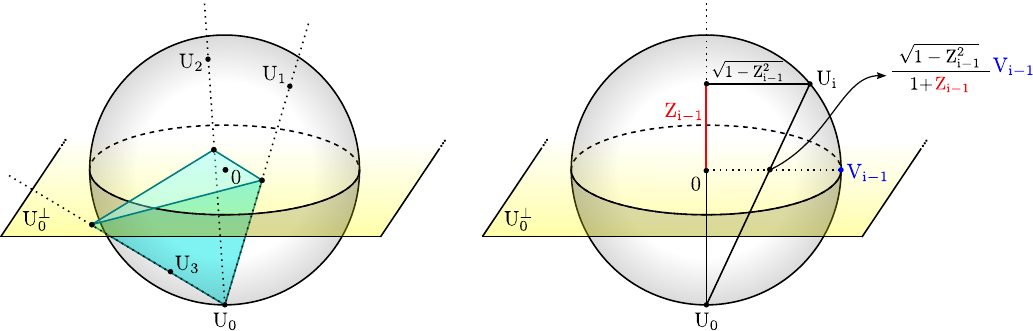}
		\caption{On the left-hand side, the \(d\)-dimensional simplex \(\Conv\left(\Urm_0,\sqrt{\frac{1-\m Z_{0}}{1+\m Z_0}} \Vrm_{0},...,\sqrt{\frac{1-\m Z_{d-1}}{1+\m Z_{d-1}}} \Vrm_{d-1}\right)\).
		On the right-hand side, geometrical details showing that the point \(\sqrt{\frac{1-\m Z_{i-1}}{1+\m Z_{i-1}}} \Vrm_{i-1}\) is found at the intersection of the plane \(\Urm_0^\perp\) with the line passing by \(\Urm_0\) and \(\Urm_i\).}
		\label{fig_simplex_with_base_on_U_0_perp}
	\end{figure}
	Under the condition \(0\in\Conv(\m V_0,...,\m V_{d-1})\), the simplex \(\Conv\left(\sqrt{\frac{1-\m Z_{0}}{1+\m Z_0}} \m V_{0},...,\sqrt{\frac{1-\m Z_{d-1}}{1+\m Z_{d-1}}} \m V_{d-1}\right)\) contains \(0\) as well and so, it can be partitioned into \(d\) subsimplices pinned at \(0\), see Figure \ref{fig_conv_V_1_V_d-1}. Therefore, we can express its volume as the sum
	\begin{align}\label{eq_02}
	    & \Delta_{d-1}\left(\sqrt{\frac{1-\m Z_{0}}{1+\m Z_0}} \m V_{0},...,\sqrt{\frac{1-\m Z_{d-1}}{1+\m Z_{d-1}}} \m V_{d-1}\right) \nonumber
	    \\
	    & = \sum_{i=1}^{d-1} \Delta_{d-1} \left(0,\sqrt{\frac{1-\m Z_{0}}{1+\m Z_0}} \m V_{0},...,\widehat{\sqrt{\frac{1-\m Z_{i}}{1+\m Z_i}} \m V_{i}},...,\sqrt{\frac{1-\m Z_{d-1}}{1+\m Z_{d-1}}} \m V_{d-1}\right) \nonumber
	    \\
	    & = \sum_{i=1}^{d-1}  \left(\prod_{j\in\lbrace0,...,d-1\rbrace\setminus\lbrace i \rbrace} \sqrt{\frac{1-\m Z_j}{1+\m Z_j}}\right) \Delta_{d-1} \left(0, \m V_{0},...,\widehat{\m V_{i}},...,\m V_{d-1}\right)
	\end{align}
	where the last equality holds by the linearity of the determinant.
	\begin{figure}[h!]
		\centering
		\includegraphics[scale=1]{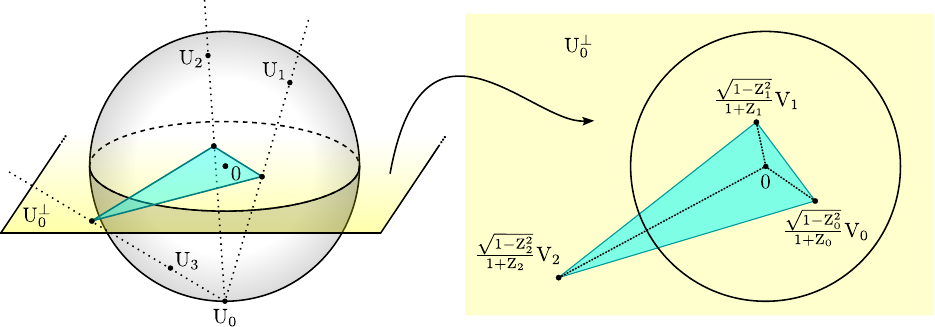}
		\caption{On the left-hand side, the \((d-1)\)-dimensional simplex \(\Conv\left(\sqrt{\frac{1-\m Z_{0}}{1+\m Z_0}} \Vrm_{0},...,\sqrt{\frac{1-\m Z_{d-1}}{1+\m Z_{d-1}}} \Vrm_{d-1}\right)\) and on the right-hand side, its partition in \(d\) subsimplices pinned at \(0\).}
		\label{fig_conv_V_1_V_d-1}
	\end{figure}
	We now multiply (\ref{eq_Cd_first_steps_on_volume_in_cylindrical_coord}) and (\ref{eq_Cd_first_steps_on_pointy_condition_in_cylindrical_coord}) and use the identity \eqref{eq_02} to get
	\begin{align}\label{eq_Cd_end_of_step2}
		& \Delta_{d}(\m U_0,...,\m U_d) \1_{0\in\Conv(P_{\m U_0^\perp}(\m U_1),...,P_{\m U_0^\perp}(\m U_d))} \nonumber
		\\
		& = \frac{1}{d} (1+\m Z_0)\cdots(1+\m Z_{d-1}) \Delta_{d-1}\left(\sqrt{\frac{1-\m Z_{0}}{1+\m Z_0}} \m V_{0},...,\sqrt{\frac{1-\m Z_{d-1}}{1+\m Z_{d-1}}} \m V_{d-1}\right) \1_{0\in\Conv(\m V_0,...,\m V_{d-1})} \nonumber
		\\
		& = \frac{1}{d} \sum\limits_{i=0}^{d-1} (1+\m Z_0)\cdots(1+\m Z_{d-1}) \left(\prod\limits_{j\in\lbrace0,...,d-1\rbrace\setminus\lbrace i\rbrace} \sqrt{\frac{1-\m Z_{j}}{1+\m Z_{j}}} \right) \Delta_{d-1}(0,\m V_0,...,\widehat{\m V_i}...,\m V_{d-1}) \1_{0\in\Conv(\m V_0,...,\m V_{d-1})} \nonumber
		\\
		& = \frac{1}{d} \sum\limits_{i=0}^{d-1} (1+\m Z_i) \left( \prod_{j\in\lbrace0,...,d-1\rbrace\setminus\lbrace i\rbrace} \sqrt{1-\m Z_j^2}\right) \Delta_{d-1}(0,\m V_0,...,\widehat{\m V_i}...,\m V_{d-1}) \1_{0\in\Conv(\m V_0,...,\m V_{d-1})}.
	\end{align}

	\noindent \textbf{Step 3. Analysis of the expectation of (\ref{eq_Cd_end_of_step2}).}
	\\~\\
	We now take the expectation of the identity (\ref{eq_Cd_end_of_step2}) and use the linearity of the expectation and the fact that the \(d\) summands in the right hand side of (\ref{eq_Cd_end_of_step2}) are identically distributed to write
	\begin{align}\label{eq_Cd_extraction_factors_Z}
		C_d & = \E\left[\Delta_{d}(\m U_0,...,\m U_d) \1_{0\in\Conv(P_{\m U_0^\perp}(\m U_1),...,P_{\m U_0^\perp}(\m U_d))}\right] \nonumber
		\\
		& = \frac{1}{d} \sum_{i=0}^{d-1} \E\left[ (1+\m Z_i) \left( \prod_{j\in\lbrace0,...,d-1\rbrace\setminus\lbrace i\rbrace} \sqrt{1-\m Z_j^2}\right) \Delta_{d-1}(0,\m V_0,...,\widehat{\m V_i}...,\m V_{d-1}) \1_{0\in\Conv(\m V_0,...,\m V_{d-1})}\right] \nonumber
		\\
		& = \E\left[ (1+\m Z_{d-1}) \left(\prod\limits_{j\in\lbrace0,...,d-2\rbrace} \sqrt{\frac{1-\m Z_{j}}{1+\m Z_{j}}} \right) \Delta_{d-1}\left(0,\m V_{0},...,\m V_{d-2}\right) \1_{0\in\Conv(\m V_0,...,\m V_{d-1})} \right] \nonumber
		\\
		& = \E\left[ 1+\m Z_{d-1} \right] \left(\prod\limits_{j\in\lbrace0,..., d-2\rbrace}  \E\left[ \sqrt{1-\m Z_j^2}\right] \right) \E\left[ \Delta_{d-1}\left(0,\m V_{0},...,\m V_{d-2}\right) \1_{0\in\Conv(\m V_0,...,\m V_{d-1})} \right]
	\end{align}
	where the last equality holds because the variables \(\m Z_0,...,\m Z_{d-1},\m V_0,...,\m V_{d-1}\) are independent. As the law of \(\m Z_0,...,\m Z_{d-1}\) is known, the expectations relative to these variables can be easily computed.
	
	From this point until the end of Step 3, we will find the relation between the average volume factor in the right-hand side of (\ref{eq_Cd_extraction_factors_Z}), that is \(\E\left[ \Delta_{d-1}\left(0,\m V_{0},...,\m V_{d-2}\right) \1_{0\in\Conv(\m V_0,...,\m V_{d-1})} \right]\), and that of the right-hand side of equation (\ref{eq_prop_avg_volume_simplex_recursive_formula}).
		We start by remarking that conditionally on \(\m V_0,...,\m V_{d-2}\), the condition \(0\in\Conv(\m V_0,...,\m V_{d-1})\) is satisfied whenever \(\m V_{d-1}\) falls in a region diametrically opposed to \(\m V_0,...,\m V_{d-2}\). More precisely, we have the equality of events
	\begin{equation*}
		\{0\in\Conv(\m V_0,...,\m V_{d-1})\} = \{\m V_{d-1}\in \text{SpConv}(-\m V_0,...,-\m V_{d-2})\}
	\end{equation*}
	where \(\text{SpConv}(\cdot)\) indicates the spherical convex hull of the specified points. Moreover, by symmetry, the \(d\)-tuples \((\m V_0,...,\m V_{d-1})\) and \((-\m V_0,\m V_1,...,\m V_{d-1})\) have the same law. Using these two considerations we find
	\begin{align}\label{eq_Cd_sum_of_spherical_indicators}
		& \E\left[ \Delta_{d-1}\left(0,\m V_{0},...,\m V_{d-2}\right) \1_{0\in\Conv(\m V_0,...,\m V_{d-1})} \right] \nonumber
		\\
		& =
		\E\left[ \Delta_{d-1}\left(0,\m V_{0},...,\m V_{d-2}\right) \1_{\m V_{d-1}\in\text{SpConv}(-\m V_0,...,-\m V_{d-2})} \right] \nonumber
		\\
		& = \frac{1}{2} \E\left[ \Delta_{d-1}\left(0,\m V_{0},...,\m V_{d-2}\right) \1_{\m V_{d-1}\in\text{SpConv}(-\m V_0,...,-\m V_{d-2})}\right] \nonumber
		\\
		& \hspace{3.8mm} + \frac{1}{2} \E\left[ \Delta_{d-1}\left(0,-\m V_{0},\m V_1,...,\m V_{d-2}\right) \1_{\m V_{d-1}\in\text{SpConv}(+\m V_0,-\m V_1,...,-\m V_{d-2})} \right] \nonumber
		\\
		& = \frac{1}{2} \E\left[ \Delta_{d-1}\left(0,\m V_{0},...,\m V_{d-2}\right) \left( \1_{\m V_{d-1}\in\text{SpConv}(-\m V_0,...,-\m V_{d-2})} + \1_{\m V_{d-1}\in\text{SpConv}(+\m V_0,-\m V_1,...,-\m V_{d-2})} \right)  \right]
	\end{align}
	where the last equality holds because the volume of a simplex pinned at \(0\) is by definition invariant by symmetry with respect to the origin of any of its vertices. Hence, in particular,
	\begin{equation*}
		\Delta_{d-1}(0,-\m V_0,\m V_1,...,\m V_{d-2}) = \frac{1}{(d-1)!} \left|\text{det}\left(\pm \m V_0,\m V_1,...,\m V_{d-2}\right)\right| = \Delta_{d-1}(0,\m V_0,...,\m V_{d-2}).
	\end{equation*}
	Concerning the sum of indicator functions in the last line of (\ref{eq_Cd_sum_of_spherical_indicators}), as in Figure \ref{fig_SpC_u_SpC_3d}, we remark that the sets \(\text{SpConv}(-\m V_0,...,-\m V_{d-2})\) and \(\text{SpConv}(+\m V_0,-\m V_1,...,-\m V_{d-2})\) are disjoint and that their union is the pre-image under \(P_{\m V_0^\perp}\) of \(\{x\in \m V_0^\perp\cap\Ball(0,1) : 0 \in \Conv(x,P_{\m V_0^\perp}(\m V_1),...,P_{\m V_0^\perp}(\m V_{d-2}))\}\), see also Figure \ref{fig_SpC_u_SpC_proj}. Therefore,
	\begin{align}\label{eq_Cd_resolution_of_sum_of_indicators}
		&
		\hspace{-0.5cm} \1_{\m V_{d-1}\in\text{SpConv}(-\m V_0,...,-\m V_{d-2})} + \1_{\m V_{d-1}\in\text{SpConv}(+\m V_0,-\m V_1,...,-\m V_{d-2})} \nonumber
		\\
		& \hspace{0.5cm} = \1_{\m V_{d-1}\in\text{SpConv}(-\m V_0,...,-\m V_{d-2})\cup\text{SpConv}(+\m V_0,-\m V_1,...,-\m V_{d-2})} \nonumber
		\\
		& \hspace{0.5cm} = \1_{P_{\m V_0^\perp}(\m V_{d-1}) \in \{x\in \m V_0^\perp\cap\Ball(0,1) : 0 \in \Conv(x,P_{\m V_0^\perp}(\m V_1),...,P_{\m V_0^\perp}(\m V_{d-2}))\}} \nonumber
		\\
		& \hspace{0.5cm} = \1_{0 \in \Conv(P_{\m V_0^\perp}(\m V_1),...,P_{\m V_0^\perp}(\m V_{d-1})}.
	\end{align}
	\begin{figure}[h!]
		\centering
		\includegraphics[scale=1]{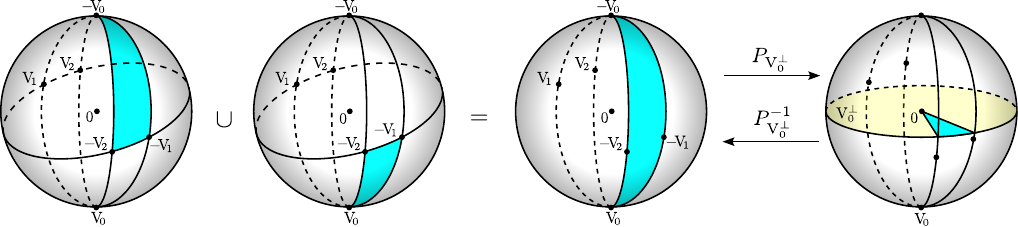}
		\caption{Illustration of the disjoint sets \(\text{SpConv}(-\m V_0,\cdots,-\m V_{d-2})\) (most left) and \(\text{SpConv}(+\m V_0,-\m V_1,...,-\m V_{d-2})\) (second from the left), their union (second from the right), which is the preimage under \(P_{\Vrm_0^\perp}\)  of its projection onto \(\Vrm_{0}^\perp\) (most right).}
		\label{fig_SpC_u_SpC_3d}
	\end{figure}
	\begin{figure}[h!]
		\centering
		\includegraphics[scale=0.9]{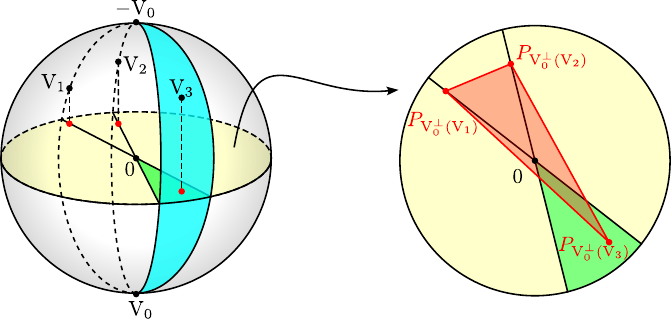}
		\caption{In cyan, the set \(\text{SpConv}(-\Vrm_0,\cdots,-\Vrm_{d-2})\cup\text{SpConv}(+\Vrm_0,-\Vrm_1,...,-\Vrm_{d-2})\) and in green, its orthogonal projection onto \(\Vrm_{0}^\perp\), which is equal to the set \(\{x\in V_0^\perp\cap\Ball(0,1) : 0 \in \Conv(x,P_{V_0^\perp}(V_1),...,P_{V_0^\perp}(V_{d-2}))\}\).}
		\label{fig_SpC_u_SpC_proj}
	\end{figure}
	
	\noindent Inserting (\ref{eq_Cd_resolution_of_sum_of_indicators}) into (\ref{eq_Cd_sum_of_spherical_indicators}), we have the first of the following two equalities:
	\begin{align}\label{eq_Cd_volume_subsimplex_under_pointy}
		 \E\left[ \Delta_{d-1}\left(0,\m V_{0},...,\m V_{d-2}\right) \1_{0\in\Conv(\m V_0,...,\m V_{d-1})} \right] %\nonumber\\
		& = \frac{1}{2} \E\left[ \Delta_{d-1}\left(0,\m V_{0},...,\m V_{d-2}\right) \1_{0 \in \Conv(P_{\m V_0^\perp}(\m V_1),...,P_{\m V_0^\perp}(\m V_{d-1})} \right] \nonumber
		\\
		& = \frac{C_{d-1}}{2(d-1)}.
	\end{align}
	In the rest of Step 3, we want to justify (\ref{eq_Cd_volume_subsimplex_under_pointy}), i.e. we aim to prove that 
	\begin{align}\label{eq_Cd_de_construction_simplex_under_pointy}
	     & \E\left[ \Delta_{d-1}\left(0,\m V_{0},...,\m V_{d-2}\right) \1_{0 \in \Conv(P_{\m V_0^\perp}(\m V_1),...,P_{\m V_0^\perp}(\m V_{d-1})} \right] \nonumber\\
	     & = \frac{C_{d-1}}{d-1} %\nonumber\\ & 
	     = \frac{\E\left[ \Delta_{d-1}\left(\m V_{0},...,\m V_{d-1}\right) \1_{0 \in \Conv(P_{\m V_0^\perp}(\m V_1),...,P_{\m V_0^\perp}(\m V_{d-1})} \right]}{d-1}.
	\end{align}
	Under the pointy condition \(0 \in \Conv(P_{\m V_0^\perp}(\m V_1),...,P_{\m V_0^\perp}(\m V_{d-1})\), the volume of the simplex \(\Conv\left(\m V_{0},...,\m V_{d-1}\right)\) on the right-hand side of \eqref{eq_Cd_de_construction_simplex_under_pointy} can be expressed as a signed sum of volumes of simplices, as in Figure \ref{fig_signed_sum}.
	\begin{figure}[h!]
		\centering
		\includegraphics[scale=0.9]{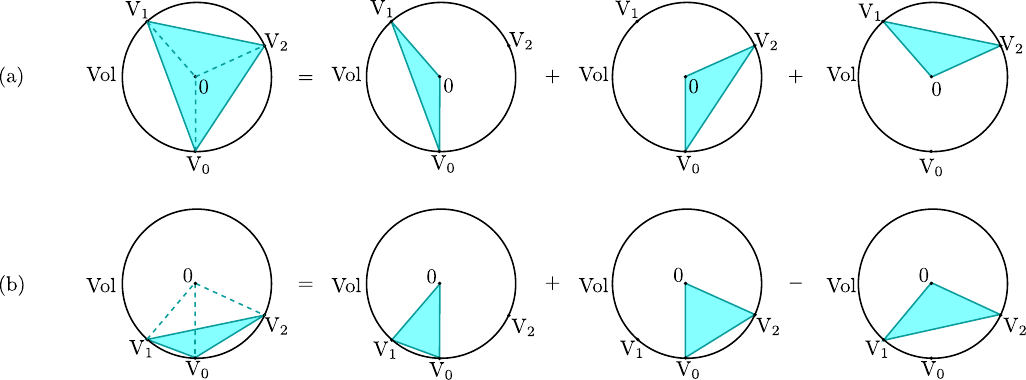}
		\caption{Representation of equation (\ref{eq_signed_sum}) relative to \(\Vrm_0,...,\Vrm_{d-1}\in\Sp_{\R^{d-1}}\) for \(d-1=2\). (a) represents the case \(\sign(\m V_0, \m V_1,\m V_2)=+1\) and (b) the case \(\sign(\m V_0, \m V_1,\m V_2)=-1\).}
		\label{fig_signed_sum}
	\end{figure}
	
	\noindent Explicitly, we have
	\begin{align}\label{eq_signed_sum}
		& \Delta_{d-1}\left(\m V_0,...,\m V_{d-1}\right) 1_{0 \in \Conv(P_{\m V_0^\perp}(\m V_1),...,P_{\m V_0^\perp}(\m V_{d-1}))} \nonumber
		\\
		& = \sum\limits_{i=1}^{d-1} \Delta_{d-1}\left(0,\m V_0,...,\widehat{\m V_i},...,\m V_{d-1}\right) \1_{0 \in \Conv(P_{\m V_0^\perp}(\m V_1),...,P_{\m V_0^\perp}(\m V_{d-1}))} \nonumber
		\\
		& \hspace{3.8mm} + \text{sign}(\m V_0,...,\m V_{d-1}) \Delta_{d-1}\left(0,\m V_1,...,\m V_{d-1}) \right) 1_{0 \in \Conv(P_{\m V_0^\perp}(\m V_1),...,P_{\m V_0^\perp}(\m V_{d-1}))}
	\end{align}
	where
	\begin{align*}
		\text{sign}(\m V_0, ... ,\m V_{d-1}) =
		\begin{cases}
			+1 & \text{ if } 0 \in \Conv\left(\m V_0,...,\m V_{d-1}\right)\text{,}
			\\
			-1 & \text{ else.}
		\end{cases}
	\end{align*}
	As \(\m V_0,...,\m V_{d-1}\) are i.i.d. uniform random variables, the summands in the first line of (\ref{eq_signed_sum}) are identically distributed and the signed volume of the second line of (\ref{eq_signed_sum}) has zero expectation, i.e., 
	\begin{align*}
		& \E\left[\Delta_{d-1}\left(\m V_0,...,\m V_{d-1}\right) \1_{0 \in \Conv(P_{\m V_0^\perp}(\m V_1),...,P_{\m V_0^\perp}(\m V_{d-1}))}\right] \nonumber
		\\
		& = \sum_{i=1}^{d-1} \E\left[\Delta_{d-1}\left(0,\m V_0,...,\widehat{\m V_i},...,\m V_{d-1}\right) \1_{0 \in \Conv(P_{\m V_0^\perp}(\m V_1),...,P_{\m V_0^\perp}(\m V_{d-1}))}\right] \nonumber
		\\
		& \hspace{3.8mm} + \E\left[\text{sign}(\m V_0,...,\m V_{d-1}) \Delta_{d-1}(0,\m V_1,...,\m V_{d-1}) \1_{0 \in \Conv(P_{\m V_0^\perp}(\m V_1),...,P_{\m V_0^\perp}(\m V_{d-1}))}\right] \nonumber
		\\
		& = (d-1) \E\left[\Delta_{d-1}\left(0,\m V_0,...,\m V_{d-2}\right) \1_{0 \in \Conv(P_{\m V_0^\perp}(\m V_1),...,P_{\m V_0^\perp}(\m V_{d-1}))}\right].
	\end{align*}
	This justifies \eqref{eq_Cd_de_construction_simplex_under_pointy} and subsequently \eqref{eq_Cd_volume_subsimplex_under_pointy}.
    \\~\\
	\noindent\textbf{Step 4. Conclusions.}
	\\~\\
	We merge equations (\ref{eq_Cd_extraction_factors_Z}) and (\ref{eq_Cd_volume_subsimplex_under_pointy}) to find 
	\begin{align}\label{eq_recursive_relation_with_Z_expectations}
	    C_d
		%&\E\left[\Delta_{d}(\m U_0,...,\m U_{d})\1_{0\in\Conv(P_{\m U_0^\perp}(\m U_1),...,P_{\m U_0^\perp}(\m U_d))}\right] 
		%\nonumber
		%\\
		& =\E\left[1+\m Z_{d-1}\right] \left(\prod\limits_{j\in\lbrace0,...,d-2\rbrace} \E\left[\sqrt{1-\m Z_j^2}\right]\right) 
		\frac{C_{d-1}}{2(d-1)}
		%\frac{1}{2(d-1)} \E\left[\Delta_{d-1}(\m V_0,...,\m V_{d-1})\1_{0\in\Conv(P_{\m V_0^\perp}(\m V_1),...,P_{\m V_0^\perp}(\m V_{d-1}))}\right].
	\end{align}
	The first statement of Proposition \ref{prop_recursive_relation_on_vol_full_simplex}, i.e. the identity \eqref{eq_prop_recursive_formula_on_C_d} or equivalently (\ref{eq_prop_avg_volume_simplex_recursive_formula}), is directly obtained from (\ref{eq_recursive_relation_with_Z_expectations}) by evaluating the expectations relative to the variables \(\m Z_0,...,\m Z_{d-1}\), which are i.i.d. with density \(\m z\mapsto\frac{\1_{\m z\in[-1,1]}}{B\left(\frac{d-1}{2},\frac{1}{2}\right)}(1-\m z^2)^{\frac{d-3}{2}}\). Explicitly,
	\begin{align*}
		\E\left[\sqrt{1-\m Z_j^2}\right] & = \frac{B\left(\frac{d}{2},\frac{1}{2}\right)}{B\left(\frac{d-1}{2},\frac{1}{2}\right)} \text{ for } j\in\lbrace1,...,d-2\rbrace,
		\\
		\E\left[1+\m Z_{d-1}\right] & = 1.
	\end{align*}
	
	To obtain the second statement of Proposition \ref{prop_recursive_relation_on_vol_full_simplex}, we start by applying \((d-1)\) times the recursive relation (\ref{eq_prop_avg_volume_simplex_recursive_formula}) and simplifying the Beta coefficients by telescoping product, i.e.
	\begin{align}\label{eq_Cd_unraveled_recursion}
		& \E\left[\Delta_{d}(\m U_0,...,\m U_{d})\1_{0\in\Conv(P_{\m U_0^\perp}(\m U_1),...,P_{\m U_0^\perp}(\m U_d))}\right] \nonumber
		\\
		& = \prod\limits_{k=2}^{d-1} \frac{\left(B\left(\frac{d}{2},\frac{1}{2}\right)\right)^k}{2k\left(B\left(\frac{d-1}{2},\frac{1}{2}\right)\right)^k} \E\left[\Delta_2(\m W_0,\m W_1,\m W_2)\1_{0\in\Conv(P_{\m W_0^\perp}(\m W_1),P_{\m W_0^\perp}(\m W_2))}\right]\nonumber 
		\\
		& = \frac{\sqrt{\pi}\Gamma\left(\frac{d}{2}\right)^d}{2^{d-1}(d-1)!\Gamma\left(\frac{d+1}{2}\right)^{d-1}}
		\E\left[\Delta_2(\m W_0,\m W_1,\m W_2)\1_{0\in\Conv(P_{\m W_0^\perp}(\m W_1),P_{\m W_0^\perp}(\m W_2))}\right] 
	\end{align}
	where \(\m W_0,\m W_1,\m W_2\) are i.i.d. uniform random variables on the \(1\)-dimensional sphere \(\Sp_{\R^2}\). To conclude, we only need to obtain the expectation of the last line of (\ref{eq_Cd_unraveled_recursion}). For this, we apply (\ref{eq_Cd_de_construction_simplex_under_pointy}),
	\begin{equation}\label{eq_Cd_reduction_to_subsimplex_base_case_computation}
		\E\left[\Delta_2(\m W_0,\m W_1,\m W_2)\1_{0\in\Conv(P_{\m W_0^\perp}(\m W_1),P_{\m W_0^\perp}(\m W_2))}\right] = 2 \E\left[\Delta_2(0,\m W_0,\m W_1)\1_{0\in\Conv(P_{\m W_0^\perp}(\m W_1),P_{\m W_0^\perp}(\m W_2))}\right]
	\end{equation}
	and then compute the expectation in the right-hand side of \eqref{eq_Cd_reduction_to_subsimplex_base_case_computation}. Using the rotation invariance, we fix \(\m W_0 = (0,-1)^T\). We then represent the variables \(\m W_1,\m W_2\) with the standard polar coordinate system \(\m W_1=(\cos(\theta_1),\sin(\theta_1))^T,\m W_2=(\cos(\theta_2),\sin(\theta_2))^T\). In these coordinates, we have \(\Delta_2(0,\m W_0,\m W_1)=\frac{\cos(\theta_1)}{2}\), as in Figure \ref{fig_polar_coordinates}.
	\begin{figure}[h!]
		\centering
		\includegraphics[scale=1]{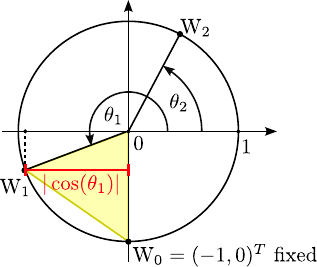}
		\caption{Standard polar change of variables for the points \(\m W_1,\m W_2\).} 
		\label{fig_polar_coordinates}
	\end{figure}
	
	\noindent Moreover, the condition \(0\in\Conv(P_{\m W_0^\perp}(\m W_1,\m W_2))\) is satisfied whenever \(\m W_1\) belongs to the right half-plane and \(\m W_2\) to the left half-plane or vice versa, and when we express it into a condition on the angles \(\theta_1,\theta_2\) we obtain
	\begin{align*}
		& \E \left[ \Delta_2(0,\m W_0,\m W_1) \1_{0\in\Conv(P_{\m W_0^\perp}(\m W_1),P_{\m W_0^\perp}(\m W_2))} \right] 
		\\
		& = \int\limits_{[0,2\pi]^2} \frac{d\theta_1}{2\pi} \frac{d\theta_2}{2\pi} \, \frac{|\cos(\theta_1)|}{2} \1_{(\theta_1,\theta_2)\in\left[-\frac{\pi}{2},\frac{\pi}{2}\right]\times\left[\frac{\pi}{2},\frac{3\pi}{2}\right]\cup\left[\frac{\pi}{2},\frac{3\pi}{2}\right]\times\left[-\frac{\pi}{2},\frac{\pi}{2}\right]} = \frac{1}{2\pi}.
	\end{align*} 
	By inserting this explicit value into (\ref{eq_Cd_reduction_to_subsimplex_base_case_computation}) and (\ref{eq_Cd_unraveled_recursion}), we obtain
	\begin{equation}\label{eq_explicit_value_E_Delta_d}
		\E\left[\Delta_{d}(\m U_0,...,\m U_d) \1_{0\in\Conv(P_{\m U_0^\perp}(\m U_1),...,P_{\m U_0^\perp}(\m U_d))}\right] = 
		\frac{1}{2^{d-1}\sqrt{\pi}(d-1)!}\frac{\Gamma\left(\frac{d}{2}\right)^{d}}{\Gamma\left(\frac{d+1}{2}\right)^{d-1}}.
	\end{equation}
	Thanks to (\ref{eq_C_d_as_expectation}), we simply need to multiply the right-hand side of  (\ref{eq_explicit_value_E_Delta_d}) by \((d\kappa_d)^d\), where \(\kappa_d\) is given at \eqref{eq:defkappad}, to obtain the required value of the constant \(C_d\) given at (\ref{eq_prop_avg_volume_simplex_explicit_value}).
	\end{proof}

\section{Extremal index: proof of Theorem \ref{thm_extremal_index}}\label{sec_extrremal_index}
	In this section, we prove Theorem \ref{thm_extremal_index}, i.e. we connect Theorem \ref{thm_extremal_index} to a former result regarding the quantity
	$\displaystyle \max_{x\in \Phi\cap \rho^{\frac1{d}}[0,1]^d}{\mathcal D}(x)$
	where $\rho>0$ and
	$${\mathcal D}(x):=\min\{r\ge 0:{\mathcal C}(x,\Phi)\subset {\mathcal B}_r(x)\}.$$
In \cite[Theorem 1 (2c)]{CalkaChen14}, the following convergence when $\rho\to\infty$ is derived: for any $t\in \R$,
	\begin{equation}\label{eq:conv_extreme}
	\lim_{\rho\to\infty}\PR\left(\kappa_d \left(\max_{x\in \Phi\cap \rho^{\frac1{d}}[0,1]^d}{\mathcal D}(x)\right)^d\le \log(\alpha_1\rho(\log\rho)^{d-1})+t\right)=e^{-e^{-t}}
	\end{equation}
	where 
	\begin{equation}\label{eq:value_alpha1}
	\alpha_1=\frac1{d!}\left(\frac{\pi^{\frac12}\Gamma(\frac{d}2+1)}{\Gamma(\frac{d+1}2)}\right)^{d-1}.
	\end{equation}
	Besides, Theorem \ref{thm_main_result} implies that for any $t\in \R$,
	\begin{equation}\label{eq:condition1.6CR}
	\lim_{\rho\to\infty}\rho\PR\left(\kappa_d {\mathcal D}^d\ge \log(\alpha_1'\rho(\log\rho)^{d-1})+t\right)=e^{-t}
	\end{equation}
	where 
	\begin{equation}\label{eq:value_alpha1prime}
	\alpha_1':=d^d\kappa_dC_d=d^d\frac{\pi^{\frac{d}{2}}}{\Gamma(\frac{d}{2}+1)}\frac{1}{2^{d-1}\sqrt{\pi}(d-1)!} \frac{\Gamma\left(\frac{d}{2}\right)^d}{\Gamma\left(\frac{d+1}{2}\right)^{d-1}}
	\end{equation}
	the second equality being due to \eqref{eq_prop_avg_volume_simplex_explicit_value}. In view of \eqref{eq:condition1.6CR}, we introduce the extremal index $\theta\in (0,1]$ with respect to the maximum of the sequence $\{{\mathcal D}(x), x\in \Phi\}$, in the spirit of \cite{ChenavierRobert}, i.e. $\theta$ is the positive number such that
	$$\lim_{\rho\to\infty}\PR\left(\kappa_d\left(\max_{x\in \Phi\cap \rho^{\frac1{d}}[0,1]^d}{\mathcal D}(x)\right)^d\le 
	\log(\alpha_1'\rho(\log\rho)^{d-1})+t\right)=e^{-\theta e^{-t}}.$$
	Applying \eqref{eq:conv_extreme} to $t=t'+\log(\alpha_1'/\alpha_1)$ for some $t'\in\R$, we observe that
	$$\lim_{\rho\to\infty}\PR\left(\kappa_d\left(\max_{x\in \Phi\cap \rho^{\frac1{d}}[0,1]^d}{\mathcal D}(x)\right)^d\le 
	\log(\alpha_1'\rho(\log\rho)^{d-1})+t'\right)=e^{-\frac{\alpha_1}{\alpha_1'}e^{-t'}}.$$
	In conclusion, we obtain, thanks to \eqref{eq:value_alpha1} and \eqref{eq:value_alpha1prime} that 
	$$\theta=\frac{\alpha_1}{\alpha_1'}=\frac1{2d}.$$
	
	\section{Extension to a parametric model}\label{sec_alpha_case}
	
	Let \(\alpha>-d\) and \(\Phi_\alpha\) be a Poisson point process on \(\R^d\) of isotropic intensity measure \(m_\alpha\),
	\begin{equation*}
		m_\alpha(d\m x) = \|\m x\|^{\alpha} d\m x
	\end{equation*}
	where we recall that \(d\m x\) indicates the Lebesgue measure on \(\R^d\). Let \(\mathcal{C}_{0}(\Phi_\alpha)\) be the Voronoi cell associated to \(0\) arising from the point process \(\Phi_\alpha\cup\{0\}\) and \(\mathcal{D}_\alpha\) the maximal distance from its boundary to \(0\),
	\begin{equation*}
		\mathcal{D}_\alpha = \min\left\{\m r\geq0 : \mathcal{C}_{0}(\Phi_\alpha) \subseteq \Ball_{\m r}(0)\right\}.
	\end{equation*}
	
	Two cases are of particular importance. When \(\alpha=0\), \(\mathcal{D}_\alpha=\mathcal{D}\) is the maximal vertex-nucleus distance of the typical cell of the homogeneous Poisson-Voronoi tessellation studied so far. When \(\alpha=-(d-1)\),  the cell \(\mathcal{C}_{0}(\Phi_\alpha)\) is up to a scaling factor $2$ the Crofton cell (or zero cell) of the isotropic and stationary Poisson hyperplane tessellation with unit intensity, see \cite[Definition 6.1.1]{HugSchneider}. When \(d=2\), a double bound of the tail probability of \(\mathcal{D}_0=\Rcirc\) and \(\mathcal{D}_{d-1}\) has been obtained by Calka \cite{Calka2002}, and this work motivates our generalization to the study of \(\mathcal{D}_\alpha\) for all \(\alpha\) such that the cell \(\mathcal{C}_{0}(\Phi_\alpha)\) is not degenerate, that is for \(\alpha>-d\). The main result of Section \ref{sec_alpha_case} is Theorem \ref{thm_tail_alpha_case} which extends Theorem \ref{thm_main_result} to any \(\alpha>-d\).
	
	\begin{theorem}\label{thm_tail_alpha_case}
		When \(t\rightarrow\infty\),
		\begin{align}\label{eq:equiv gen alpha}
			\PR\left(\mathcal{D}_\alpha \geq t\right) = & \frac{2^{d^2+d(\alpha-2)-(\alpha-2)}\pi^{\frac{d(d-1)}{2}}}{(d-1)!} \left(\frac{\Gamma\left(\frac{d+\alpha}{2}\right)}{\Gamma\left(d+\frac{\alpha}{2}\right)}\right)^{d-1} t^{d^2+d(\alpha-1)-\alpha} e^{-K_{d,\alpha}t^{d+\alpha}} \nonumber
			\\
			& \hspace{8cm} + \mathcal{O}\left(t^{d^2+d(\alpha-2)-2\alpha} e^{-K_{d,\alpha}t^{d+\alpha}}\right)
		\end{align}
		where
		\begin{equation*}
			K_{d,\alpha} = \frac{\sigma_{d-1} 2^{d+\alpha-1}}{d+\alpha} B\left(\frac{d-1}{2},\frac{d+\alpha+1}{2}\right).
		\end{equation*}
	\end{theorem}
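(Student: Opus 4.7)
The strategy reproduces the four stages of Sections \ref{sec_expectation_of_nr_of_some_distant_vertices}--\ref{sec_proof_of_theorem} and \ref{sec_volume_kingman}, with the Lebesgue measure replaced throughout by $m_\alpha$. The characterization of pointy vertices in Lemma \ref{prop_equivalent_characterization_of_VOM} is purely geometric and carries over unchanged, so that $\{\mathcal{D}_\alpha\geq t\}=\{\Card(\VOMR)\geq 1\}$ and the Bonferroni squeeze \eqref{eq_proof_of_main_result} still apply. It therefore suffices to prove analogues of Propositions \ref{thm_E_nr_locmaxvert} and \ref{thm_E_pairs_negligible} for the weighted process $\Phi_\alpha$.

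For the expected count, applying the Mecke formula for $\Phi_\alpha$ introduces an extra factor $\prod_{i=1}^d\|\m x_i\|^\alpha$ in \eqref{eq_expectation_nr_maxvert_up_to_MS}, which under the spherical Blaschke-Petkantschin substitution $\m x_i=\m r(\m u_i-\m u_0)$ becomes $\m r^{d\alpha}\prod_{i=1}^d\|\m u_i-\m u_0\|^\alpha$. The void probability contributes $e^{-m_\alpha(\Ball)}$, and a direct polar-coordinate computation (using the scaling $m_\alpha(\lambda A)=\lambda^{d+\alpha}m_\alpha(A)$) gives $m_\alpha(\Ball)=K_{d,\alpha}\,\m r^{d+\alpha}$ for any ball $\Ball$ of radius $\m r$ through the origin. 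Defining
\begin{equation*}
C_{d,\alpha}:=\E\Bigl[\Delta_d(\m U_0,\ldots,\m U_d)\prod_{i=1}^d\|\m U_i-\m U_0\|^\alpha\,\1_{0\in\Conv(P_{\m U_0^\perp}(\m U_1),\ldots,P_{\m U_0^\perp}(\m U_d))}\Bigr],
\end{equation*}
Fubini plus standard incomplete-gamma asymptotics yield
\begin{equation*}
\E[\Card(\VOMR)]\sim\frac{(d\kappa_d)^{d+1}\,C_{d,\alpha}}{(d+\alpha)\,K_{d,\alpha}}\,t^{d^2+d(\alpha-1)-\alpha}\,e^{-K_{d,\alpha}t^{d+\alpha}}.
\end{equation*}

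To make $C_{d,\alpha}$ explicit we reproduce the cylindrical-coordinate argument of Section \ref{sec_volume_kingman}: with $\m U_i=\sqrt{1-\m Z_{i-1}^2}\,\m V_{i-1}-\m Z_{i-1}\m U_0$ the identity $\|\m U_i-\m U_0\|^2=2(1+\m Z_{i-1})$ converts the weight into $2^{d\alpha/2}\prod_{i=1}^d(1+\m Z_{i-1})^{\alpha/2}$, and the same determinant manipulations and subsimplex decomposition then produce
\begin{equation*}
C_{d,\alpha}=2^{d\alpha/2}\,\E\bigl[(1+\m Z_{d-1})^{1+\alpha/2}\bigr]\prod_{j=0}^{d-2}\E\bigl[(1+\m Z_j)^{\alpha/2}\sqrt{1-\m Z_j^2}\bigr]\cdot\frac{C_{d-1}}{2(d-1)},
\end{equation*}
where the last factor, involving only unweighted uniform variables on $\Sp_{\R^{d-1}}$, is the known constant from Proposition \ref{prop_recursive_relation_on_vol_full_simplex}. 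Each $\m Z_j$-expectation reduces to a Beta integral of the form $B\bigl(\tfrac{d+\alpha}{2},\tfrac{d}{2}\bigr)/B\bigl(\tfrac{d-1}{2},\tfrac{1}{2}\bigr)$ up to powers of $2$; the $(d-1)$-fold product of these generates the $\bigl(\Gamma(\tfrac{d+\alpha}{2})/\Gamma(d+\tfrac{\alpha}{2})\bigr)^{d-1}$ term, and collecting the accompanying powers of $2$ and $\pi$ together with $K_{d,\alpha}$ in the denominator yields the prefactor displayed in \eqref{eq:equiv gen alpha}.

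The expected number of far-off pairs is controlled by reproducing the six-step analysis of Section \ref{sec_expected_nr_pairs}. The Mecke formula and the change of variables of Lemma \ref{prop_Nikitenko} gain an extra weight $\rho_0^{k\alpha}\m r^{(d-k)\alpha}(\m r')^{(d-k)\alpha}$ times bounded angular factors, and the geometric bound of Lemma \ref{prop_bound_for_couple_of_vertices} generalizes to
\begin{equation*}
m_\alpha(\Ball\cup\Ball')\geq c_1\m r^{d+\alpha}+c_2(\m r')^{d+\alpha}+c_3\,\m r^{d+\alpha-1}\delta
\end{equation*}
for some positive constants $c_1,c_2,c_3$ depending only on $d$ and $\alpha$, obtained by integrating $\|x\|^\alpha$ against the same two disjoint outermost half-balls (where $\|x\|$ is comparable to $\m r$ or $\m r'$) and the pyramidal remainder used in the proof of Lemma \ref{prop_bound_for_couple_of_vertices}. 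With these substitutions, the case distinctions ($k=0$, $1\leq k\leq d-2$, $k=d-1$) and analytical estimates of Steps 4--6 go through verbatim with polynomial exponents shifted by the appropriate multiples of $\alpha$, yielding $\E[\Card((\VOMR)^2_\neq)]=\mathcal{O}(t^{d^2+d(\alpha-2)-2\alpha}e^{-K_{d,\alpha}t^{d+\alpha}})$, which is precisely the error term in \eqref{eq:equiv gen alpha}, and the Bonferroni squeeze concludes. The principal technical obstacle is the bookkeeping required to telescope the weighted Beta expectations against $K_{d,\alpha}$ and recover the compact $\Gamma$-function expression of the constant in \eqref{eq:equiv gen alpha}; once this is done the remaining estimates are essentially mechanical adaptations of Sections \ref{sec_expected_nr_pairs}--\ref{sec_proof_of_theorem}.
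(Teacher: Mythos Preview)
Your overall architecture is correct and matches the paper's: reduce to first and second moments of $\Card(\VOMR)$ via the Bonferroni squeeze, compute the first moment by Mecke plus spherical Blaschke--Petkantschin picking up the extra factor $\m r^{d\alpha}\prod_i\|\m u_i-\m u_0\|^\alpha$, evaluate $C_{d,\alpha}$ through the cylindrical coordinates using $\|\m U_i-\m U_0\|^2=2(1+\m Z_{i-1})$ and the known value of $C_{d-1}$, and rerun the pair estimate of Section~\ref{sec_expected_nr_pairs}. All of that is right, and your reduction of $C_{d,\alpha}$ to Beta integrals times $C_{d-1}/(2(d-1))$ is exactly what the paper does.

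The gap is in your generalization of Lemma~\ref{prop_bound_for_couple_of_vertices}. You propose to bound $m_\alpha(\Ball\cup\Ball')$ from below by integrating $\|x\|^\alpha$ over ``the same two disjoint outermost half-balls (where $\|x\|$ is comparable to $\m r$ or $\m r'$) and the pyramidal remainder''. This does not work: the origin lies on $\partial\Ball\cap\partial\Ball'$, and under the constraint $\delta^2\ge(\m r')^2-\m r^2$ it can sit arbitrarily close to (or on) the bounding hyperplane of the Lemma~\ref{prop_bound_for_couple_of_vertices} half-balls, so $\|x\|$ is \emph{not} uniformly comparable to $\m r$ on $\Ball_{(\frac12)}$. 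Your argument therefore only yields $m_\alpha(\Ball\cup\Ball')\ge c\,\m r^{d+\alpha}+\ldots$ with some dimensional constant $c$ that you have no reason to believe equals $K_{d,\alpha}$. But the sharp constant is essential: anything strictly less than $K_{d,\alpha}$ produces an exponential $e^{-c\,t^{d+\alpha}}$ in the pair bound that dominates the leading term $e^{-K_{d,\alpha}t^{d+\alpha}}$, and the squeeze collapses. The paper singles this out as the main new difficulty and resolves it with a different partition (its Lemma~\ref{lemma_bound_on_volume_alpha}): the half-balls are cut by hyperplanes through the \emph{origin}, so that each half is the image of the other under an isometry fixing $0$ and hence has $m_\alpha$-measure exactly $\tfrac12 K_{d,\alpha}\m r^{d+\alpha}$; the leftover piece $\mathcal A'$ is then bounded below by an explicit polar-coordinate computation giving $L\,\m r^{d+\alpha-1}\delta$. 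Likewise the ``pyramidal remainder'' argument of Lemma~\ref{prop_bound_for_couple_of_vertices} does not transfer, because $m_\alpha$ of a pyramid is not base times height over $d$. Once you replace your half-ball/pyramid bound by this rotational-symmetry argument, the rest of your Steps~4--6 adaptation goes through as you describe.
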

	The rest of this section sketches a proof of Theorem \ref{thm_tail_alpha_case}, whose strategy is analogous to what was done in previous sections for the case \(\alpha=0\). We start by defining the set of pointy vertices of \(\mathcal{C}_0(\Phi_\alpha)\):
	\begin{equation*}
		\mathcal{V}^{\geq t}_{\text{max}}(\Phi_\alpha) = \{\text{all pointy vertices of } \mathcal{C}_{0}(\Phi_\alpha) \text{ at distance} \geq t \text{ from } 0\}.
	\end{equation*}
	For sake of consistency, we recall that by $\mathcal{V}^{\ge t}_{\text{max}}$ without argument, we mean the variable $\mathcal{V}^{\ge t}_{\text{max}}(\Phi_0)$ which has been considered in the previous sections. Then, the exact same reasoning as in %the proof of Theorem \ref{thm_main_result}, see 
	Section \ref{sec_proof_of_theorem} holds in the \(\alpha\)-case as well and justifies the double bound
	\begin{equation*}
		\E\left[\Card\left(\VOMR(\Phi_\alpha)\right)\right] - \E\left[\Card\left((\VOMR(\Phi_\alpha))^2_{\neq}\right)\right] \leq \PR\left(\mathcal{D}_\alpha\geq t\right) \leq \E\left[\Card\left(\VOMR(\Phi_\alpha)\right)\right].
	\end{equation*}
	Therefore, the probability tail \(\PR\left(\Rcirc_\alpha\geq t\right)\) shares the same asymptotics as \(\E\left[\Card\left(\VOMR(\Phi_\alpha)\right)\right]\) up to an error of growth order \(\E\left[\Card\left((\VOMR(\Phi_\alpha))^2_{\neq}\right)\right]\). We give a detailed description of the computation of \(\E\left[\Card\left(\VOMR(\Phi_\alpha)\right)\right]\) in Sections \ref{subsec_exp_number_pointy_alpha} and \ref{subsec_C_d_alpha}. In Section \ref{subsec_expectation_couples_alpha_case}, we discuss the strategy for proving the negligibility of \(\E\left[\Card\left((\VOMR(\Phi_\alpha))^2_{\neq}\right)\right]\) with respect to \(\E\left[\Card\left(\VOMR(\Phi_\alpha)\right)\right]\), without going into details.
	
	\subsection{An integral expression for the expected number of pointy vertices}\label{subsec_exp_number_pointy_alpha}
	Following the same strategy as in the proof of Proposition \ref{thm_E_nr_locmaxvert}, we obtain an explicit integral expression for \(\E\left[\Card\left(\mathcal{V}^{\geq t}_{\blacktriangle}(\Phi_\alpha)\right)\right]\) by consecutive applications of the Mecke formula and the spherical Blaschke-Petkantschin change of variables combined with \eqref{eq_coord_chg_BP}, i.e.
	\begin{align}\label{eq_expectation_nr_pointy_alpha}
		& \E\left[\Card\left(\mathcal{V}^{\geq t}_{\blacktriangle}(\Phi_\alpha)\right)\right] \nonumber
		\\
		& = \E\left[  \frac{1}{d!} \sum\limits_{(\m X_1,...,\m X_d)\in(\Phi_\alpha)^2_{\neq}} \1_{\Center(\text{Ball}(0,\m X_1,...,\m X_d))\in\mathcal{V}^{\geq t}_{\blacktriangle}(\Phi_\alpha)} \right] \nonumber
		\\
		& = \frac{1}{d!} \int\limits_{\R^d} m_\alpha(d\m x_1)... \int\limits_{\R^d}m_\alpha(d\m x_d) \E\left[ \1_{\underbrace{ \text{\scalebox{.7}{Center}}(\text{\scalebox{.7}{Ball}}(0,\m x_1,...,\m x_d))}_{c}\in\mathcal{V}^{\geq t}_{\blacktriangle}(\Phi_\alpha\cup(0,\m x_1,...,\m x_d))} \right] \nonumber
		\\
		& = \frac{1}{d!} \int\limits_{\R^d} d\m x_1... \int\limits_{\R^d}d\m x_d\,\|\m x_1\|^{\alpha}\cdots\|\m x_d\|^{\alpha} \E\left[ \1_{c\in \mathcal{V}^{\geq t}_{\blacktriangle}(\Phi_\alpha\cup(0,\m x_1,...,\m x_d))} \right] \nonumber
		\\
		& = \int\limits_{\R_+} d\m r \int\limits_{(\Sp_{\R^d})^{d+1}} d\m u_0\cdots d\m u_d \, \|\m r(\m u_1-\m u_0)\|^{\alpha} \cdots \|\m r(\m u_d-\m u_0)\|^{\alpha} \m r^{d^2-1} \Delta_{d}(\m u_0,...,\m u_d) 
		\E\left[ \1_{c\in\mathcal{V}^{\geq t}_{\blacktriangle}(\Phi_\alpha\cup(0,\m x_1,...,\m x_d))} \right].
	\end{align}
	In the same way as at \eqref{eq_set_decomposition_of_VOMR_after_MS}, we obtain
	\begin{equation}\label{eq_three_conditions_alpha}
		\E\left[ \1_{c\in \mathcal{V}^{\geq t}_{\blacktriangle}(\Phi_\alpha\cup(0,\m x_1,...,\m x_d))} \right] = \1_{\m r\geq t} \1_{0\in\Conv(P_{\m u_0^\perp}(\m u_1),...,P_{\m u_0^\perp}(\m u_d))} e^{-m_\alpha(\Ball_{\m r}(-\m r\m u_0))}
	\end{equation}
	where \(\Ball_{\m r}(-\m r\m u_0)\) indicates the ball adjacent to \(0\) of radius \(\m r\) and center \(-\m r\m u_0\). Its \(m_\alpha\)-measure is
	\begin{equation*}
		m_\alpha(\Ball_r(-\m r\m u_0)) = K_{d,\alpha} \m r^{d+\alpha},
	\end{equation*}
	where \(K_{d,\alpha}\) is the \(m_\alpha\)-measure of a unit ball adjacent to the origin, i.e.%\Comment{Ceci: recomputed, ok.}
	\begin{equation}\label{eq_m_alpha_volume_unit_ball}
		K_{d,\alpha} = m_\alpha(\Ball_1(-\m u_0)) = \frac{(d-1)\kappa_{d-1} 2^{d+\alpha-1}}{d+\alpha} B\left(\frac{d-1}{2},\frac{d+\alpha+1}{2}\right).
	\end{equation}
	Inserting (\ref{eq_three_conditions_alpha}) into (\ref{eq_expectation_nr_pointy_alpha}) and using Fubini's theorem, the integral splits into two factors:
	\begin{align}\label{eq_expectation_pointy_after_Fubini_alpha}
			& \E\left[\Card\left(\mathcal{V}^{\geq t}_{\blacktriangle}(\Phi_\alpha)\right)\right] \nonumber
			\\
			&= \int\limits_{t}^\infty d\m r \int\limits_{(\Sp_{\R^d})^{d+1}} \prod_{i=1}^dd\m u_i \, \|\m r(\m u_i-\m u_0)\|^{\alpha}{\m r}^{d^2-1} \Delta_{d}(\m u_0,...,\m u_d) \1_{0\in\Conv(P_{\m u_0^\perp}(\m u_1),...,P_{\m u_0^\perp}(\m u_d))} e^{-K_{d,\alpha} \m r^{d+\alpha}} \nonumber
		\\
		& = \int\limits_{t}^\infty d\m r \, \m r^{d^2+d\alpha-1} e^{-K_{d,\alpha} \m r^{d+\alpha}}\hspace*{-.5cm}\int\limits_{(\Sp_{\R^d})^{d+1}} \prod_{i=1}^dd\m u_i \|\m u_i-\m u_0\|^{\alpha}\Delta_{d}(\m u_0,...,\m u_d) \1_{0\in\Conv(P_{\m u_0^\perp}(\m u_1),...,P_{\m u_0^\perp}(\m u_d))}.
	\end{align}
	The integral with respect to \(\m u_0,...,\m u_d\) in the right-hand side of (\ref{eq_expectation_pointy_after_Fubini_alpha}) is, up to renormalization, the expectation
	\begin{equation}\label{eq_C_d_alpha_implicit}
		C_{d,\alpha} := \E\left[ \|\m U_1-\m U_0\|^{\alpha} \cdots \|\m U_d-\m U_0\|^{\alpha} \Delta_{d}(\m U_0,...,\m U_d) \1_{0\in\Conv(P_{\m U_0^\perp}(\m U_1),...,P_{\m U_0^\perp}(\m U_d))} \right]
	\end{equation}
	where \(\m U_0,...,\m U_d\) are i.i.d. uniform random variables on the \((d-1)\)-dimensional sphere \(\Sp_{\R^d}\). Moreover, the integral with respect to \(r\) in the right-hand side of (\ref{eq_expectation_pointy_after_Fubini_alpha}) can be computed by multiple integration by parts. Consequently, we obtain
	\begin{equation}\label{eq:equiv interm alpha}
		\E\left[\Card\left(\mathcal{V}^{\geq t}_{\blacktriangle}(\Phi_\alpha)\right)\right] \underset{t\rightarrow\infty}{\sim} \frac{(d\kappa_d)^{d+1}C_{d,\alpha}}{K_{d,\alpha}(d+\alpha)} t^{d^2+d(\alpha-1)-\alpha} e^{-K_{d,\alpha}t^{d+\alpha}}
	\end{equation}
	where we recall that \(\kappa_d\) is the volume of the $d$-dimensional unit ball given in \eqref{eq:defkappad}.
	
	\subsection{Computation of the constant \(C_{d,\alpha}\)}\label{subsec_C_d_alpha}
	
	We now proceed to compute the constant \(C_{d,\alpha}\), that is, the expectation in (\ref{eq_C_d_alpha_implicit}). Similarly as in the proof of Proposition \ref{prop_recursive_relation_on_vol_full_simplex}, we apply to \(\m U_0,...,\m U_d\) the cylindrical change of coordinates of equation \eqref{eq_cylindrical_coordinates}, see Figure \ref{fig_cylindrical_coordinates}. Using \eqref{eq_Cd_first_steps_on_volume_in_cylindrical_coord} and \eqref{eq_Cd_extraction_factors_Z} for the treatment of the factor $\Delta_{d}(\m U_0,...,\m U_d) \1_{0\in\Conv(P_{\m U_0^\perp}(\m U_1),...,P_{\m U_0^\perp}(\m U_d))}$ as well as the equality
	due to Pythagora's theorem $\|\m U_i-\m U_0\|^2=2(1+\m Z_{i-1})$ for $1\le i\le d$, we obtain %\Comment{Ceci: recomputed, ok.}
	\begin{align}\label{eq_C_d_alpha_splitted_integrals}
		C_{d,\alpha}
		& = \E\left[ 2^{\frac{d\alpha}{2}}(1+\m Z_0)^{\frac{\alpha}{2}+1}\cdots(1+\m Z_{d-1})^{\frac{\alpha}{2}+1} \prod\limits_{j=0}^{d-2}\sqrt{\frac{1-\m Z_j}{1+\m Z_j}}  \right]
		\E\left[ \Delta_{d-1}(0,\m V_0,...,\m V_{d-2})\1_{0\in\Conv(\m V_0,...,\m V_{d-1})} \right]
	\end{align}
	where \(\m V_0,...,\m V_{d-1}\) are i.i.d. random variables uniformly distributed on the \((d-2)\)-dimensional sphere \(\Sp_{\R^{d-1}}\) and \(\m Z_0,...,\m Z_{d-1}\) are i.i.d. random variables with density \(\m z\mapsto\frac{1}{B(\frac{d-1}{2},\frac{1}{2})}(1-\m z^2)^{\frac{d-3}{2}}1_{\m z\in[-1,1]}\). The expectation relative to \(\m Z_0,...,\m Z_{d-1}\) can be made explicit, i.e. %\Comment{Ceci: recomputed, ok.}
	\begin{equation}\label{eq_extraction_zeta_expectations_alpha}
		\E\left[ 2^{\frac{d\alpha}{2}}(1+\m Z_0)^{\frac{\alpha}{2}+1}\cdots(1+\m Z_{d-1})^{\frac{\alpha}{2}+1} \prod\limits_{j=0}^{d-2}\sqrt{\frac{1-\m Z_j}{1+\m Z_j}}  \right] = 2^{d^2+d(\alpha-1)} \frac{B\left(\frac{d+\alpha}{2},\frac{d}{2}\right)^{d-1}B\left(\frac{d+\alpha+1}{2},\frac{d-1}{2}\right)}{B\left(\frac{d-1}{2},\frac{1}{2}\right)^{d}}.
	\end{equation}
	Moreover, using (\ref{eq_Cd_volume_subsimplex_under_pointy}), we obtain that the expectation with respect to the variables \(\m V_0,...,\m V_{d-1}\) 
	satisfies
	\begin{equation}\label{eq_random_volume_alpha_case}
		\E\left[ \Delta_{d-1}(0,\m V_0,...,\m V_{d-2})\1_{0\in\Conv(\m V_0,...,\m V_{d-1})} \right] = \frac{C_{d-1}}{2(d-1)},
	\end{equation}
	where we recall the value of $C_{d-1}$ at \eqref{eq:value of Cd}. 
	
	We now insert (\ref{eq_extraction_zeta_expectations_alpha}) and (\ref{eq_random_volume_alpha_case}) into (\ref{eq_C_d_alpha_splitted_integrals}) and simplify the resulting expression by means of the identity %\Comment{Ceci: yes.}
	\begin{equation*}
		B\left(\frac{d+\alpha+1}{2},\frac{d-1}{2}\right) = \frac{B\left(\frac{d+\alpha}{2},\frac{d}{2}\right)B\left(\frac{d-1}{2},\frac{1}{2}\right)}{B\left(\frac{d+\alpha}{2},\frac{1}{2}\right)}
	\end{equation*}
	to deduce that %\Comment{Ceci: yes}
	\begin{equation*}
		C_{d,\alpha}  = 2^{d^2+d(\alpha-1)} \frac{B\left(\frac{d+\alpha}{2},\frac{d}{2}\right)^{d-1}B\left(\frac{d+\alpha+1}{2},\frac{d-1}{2}\right)}{B\left(\frac{d-1}{2},\frac{1}{2}\right)^{d}} \frac{1}{2(d-1)}C_{d-1}
		= \frac{2^{d^2+d(\alpha-2)+1}}{\pi^{\frac{d}{2}}(d-1)!} \Gamma\left(\frac{d}{2}\right) \frac{B\left( \frac{d+\alpha}{2},\frac{d}{2} \right)^d}{B\left(\frac{d+\alpha}{2},\frac{1}{2}\right)}
	\end{equation*}
	where the second equality is obtained by using (\ref{eq:value of Cd}).
	Simplifying the constant in the right-hand side of \eqref{eq:equiv interm alpha}, we obtain that $\E\left[\Card\left(\mathcal{V}^{\geq t}_{\text{max}}(\Phi_\alpha)\right)\right]$ has same asymptotics as in the right-hand side of \eqref{eq:equiv gen alpha}, i.e. %\Comment{Ceci: recomputed, ok.}
	\begin{align*}
		\E\left[\Card\left(\mathcal{V}^{\geq t}_{\text{max}}(\Phi_\alpha)\right)\right] \underset{t\rightarrow\infty}{\sim} &
		\frac{2^{d^2+d(\alpha-2)-(\alpha-2)} \pi^{\frac{d(d-1)}{2}}}{(d-1)!} \left( \frac{\Gamma\left(\frac{d+\alpha}{2}\right)}{\Gamma\left(d+\frac{\alpha}{2}\right)} \right)^{d-1} t^{d^2+d(\alpha-1)-\alpha} e^{-K_{d,\alpha}t^{d+\alpha}}.
	\end{align*}
	
	\subsection{Asymptotic bound for \(\E\left[\Card\left((\VOMR(\Phi_\alpha))^2_{\neq}\right)\right]\)}\label{subsec_expectation_couples_alpha_case}
	We compute the bound on \(\E \left[ \Card \left((\VOMR(\Phi_\alpha))^2_{\neq}\right) \right] \) with the same strategy used in Section \ref{sec_expected_nr_pairs} for the case \(\alpha=0\). The main difference comes from the fact that the measure $\mu_\alpha$ is no longer translation-invariant. Our main difficulty then consists in finding a proper lower bound for the measure of the union of two balls, as discussed in Lemma \ref{lemma_bound_on_volume_alpha}.

    Using the same notation as in Section \ref{sec_expected_nr_pairs}, we observe that the analogue of \eqref{eq_expectation_pairs_after_Nikitenko} for general $\alpha$ is
	\begin{align}\label{eq_after_Nikitenko_alpha}
		& \E \left[ \Card \left((\VOMR(\Phi_\alpha))^2_{\neq}\right) \right] \nonumber
		\\
		& = \sum\limits_{k=0}^{d-1} \frac{1}{k!((d-k)!)^2} \int\limits_{\mathcal{L}_k^d}d\m Q\int\limits_{\Sp_{\m Q}^{k+1}}d\urm_0d\vbf\int\limits_{\R_{+}}d\rho_0 \int\limits_{\Sp_{\m Q^\perp}}d\m u\int\limits_{\R_{+}}d\m r\int\limits_{\Sp_{\R^d}^{d-k}}d\wbf \int\limits_{\Sp_{\m Q^\perp}}d\m u'\int\limits_{\R_{+}}d\m r'\int\limits_{\Sp_{\R^d}^{d-k}}d\wbf 
		\nonumber
		\\
		& \hspace{5mm} \rho_0^{kd-1} \left(k!\Delta_{k}\left(-\urm_0,\vbf\right)\right)^{d-k+1} \1_{0\leq\rho_0\leq \m r,\m r'} 
		\nonumber
		\\
		& \hspace{5mm} \m r^{(d-k)(d-1)+1} \sqrt{\m r^2-\rho_0^2}^{d-k-2}(d-k)!\Delta_{d-k}\left(-\frac{\sqrt{\m r^2-\rho_0^2}}{\m r}\m u,P_{\m Q^\perp}(\wbf)\right) \nonumber
		\\
		& \hspace{5mm} (\m r')^{(d-k)(d-1)+1} \sqrt{(\m r')^2-\rho_0^2}^{d-k-2}(d-k)!\Delta_{d-k}\left(-\frac{\sqrt{(\m r')^2-\rho_0^2}}{\m r'}\m u',P_{\m Q^\perp}(\wbf')\right) 
		\nonumber\\
		& \hspace{5mm} \|\m x_1\|^\alpha \cdots \|\m x_k\|^\alpha \|\m y_1\|^\alpha \cdots \|\m y_{d-k}\|^\alpha \|\m y'_1\|^\alpha \cdots \|\m y'_{d-k}\|^\alpha e^{-m_\alpha(\Ball\cup\Ball')} \1_{(c,c')\in\left(\VOMR(\lbrace0\rbrace\cup\lbrace\x,\y,\y'\rbrace)\right)^2_{\neq}}.
	\end{align}
	We now wish to find an upper bound for the integrand of (\ref{eq_after_Nikitenko_alpha}). We proceed as in Step 3 of Section \ref{sec_expected_nr_pairs} by using the notation \(\m x_i=\rho_0(\m u_0+\m v_i)\), \(\m y_i=\rho_0\m u_0+\sqrt{\m r^2-\rho_0^2}\m u + \m r\m w_i\) and \(\m y'_i=\rho_0\m u_0+\sqrt{(\m r')^2-\rho_0^2}\m u' + \m r'\m w'_i\), see Figure \ref{fig_BP_Nikitenko}. 
	The main novelty consists in bounding from below the \(m_\alpha\)-measure of the union of two balls centered at pointy vertices and of respective radii $\m r$ and $\m r'$ under the condition that $\m r'\ge \m r$.
	This is done in Lemma \ref{lemma_bound_on_volume_alpha} which is the analogue to Lemma \ref{prop_bound_for_couple_of_vertices}.
	\begin{lemma}\label{lemma_bound_on_volume_alpha}
		Let \(\Ball\) (resp. \(\Ball'\)) be a \(d\)-dimensional ball of center \(c\) (resp. \(c'\)), radius \(\m r\) (resp. \(\m r'\)) whose boundary contains the origin \(0\) and the \(d\) points \(\m x_1,...,\m x_k,\m y_1,...,\m y_{d-k}\) (resp. \(\m x_1,...,\m x_k,\m y'_1,...,\m y'_{d-k}\)). Suppose \(\m r'\geq \m r\). Then there exists a positive constant \(L\) depending only on \(d \text{ and }\alpha\) such that 
		\begin{equation}
			e^{-m_\alpha(\Ball\cup\Ball')} \1_{(c,c')\in\left(\VOMR(\lbrace0\rbrace\cup\lbrace\x,\y,\y'\rbrace)\right)^2_{\neq}} \leq e^{-K_{d,\alpha}\m r^{d+\alpha}-L\m r^{d+\alpha-1}\delta} 1_{\delta^2\geq (\m r')^2-\m r^2}
		\end{equation}
		where \(K_{d,\alpha}\) is given by (\ref{eq_m_alpha_volume_unit_ball}).
	\end{lemma}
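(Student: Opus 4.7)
The plan is to mimic the proof of Lemma \ref{prop_bound_for_couple_of_vertices}, handling separately the indicator bound (which transfers verbatim) and the lower bound on $m_\alpha(\Ball\cup\Ball')$ (which has to be adapted to the rotation-invariant but non-translation-invariant measure $m_\alpha$).

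The derivation of the condition $\delta^2\ge (\m r')^2-\m r^2$ in Lemma \ref{prop_bound_for_couple_of_vertices} is purely geometric: the pointy condition at $c'$, together with the hypothesis that the elements of $\y'$ lie in $\partial \Ball'\setminus \Ball$, forces a full hemisphere of $\partial \Ball'$ to lie outside $\Ball$, which is possible only when $\delta\ge\sqrt{(\m r')^2-\m r^2}$. Since no measure intervenes in this argument, it carries over unchanged to the $m_\alpha$ setting.

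For the measure bound, rotational invariance of $m_\alpha$ and the adjacency of $\Ball$ to the origin yield exactly $m_\alpha(\Ball)=K_{d,\alpha}\m r^{d+\alpha}$. Writing $m_\alpha(\Ball\cup\Ball')\ge m_\alpha(\Ball)+m_\alpha(\Pi)$ for any subset $\Pi\subset \Ball'\setminus \Ball$, it suffices to exhibit such a $\Pi$ with $m_\alpha(\Pi)\ge L\m r^{d+\alpha-1}\delta$. Mirroring the pyramid construction of Lemma \ref{prop_bound_for_couple_of_vertices}, I would take $\Pi$ to be a pyramid or thin slab sitting in $\Ball'\setminus \Ball$, whose $(d-1)$-dimensional cross-section has Lebesgue measure of order $\m r^{d-1}$, whose height is of order $\delta$, and which is positioned so that every point of $\Pi$ lies at Euclidean distance of order $\m r$ from the origin. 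The separation $\delta\ge\sqrt{(\m r')^2-\m r^2}$ from the previous step ensures that such a region fits in $\Ball'\setminus \Ball$. On such a $\Pi$ the weight $\|\m x\|^\alpha$ is bounded below by a constant multiple of $\m r^\alpha$, so that
\begin{equation*}
m_\alpha(\Pi)=\int_\Pi \|\m x\|^\alpha\, d\m x\ge L\,\m r^{d+\alpha-1}\delta
\end{equation*}
for some explicit $L=L(d,\alpha)>0$, and the bound of the lemma follows after exponentiation.

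The principal obstacle is the placement of $\Pi$: unlike the $\alpha=0$ case, where any region of the right Lebesgue volume inside $\Ball\cup\Ball'$ does the job, here $\Pi$ must additionally be confined to a spherical shell around the origin of inner and outer radii both proportional to $\m r$, so that $\|\m x\|^\alpha$ contributes the correct power $\m r^\alpha$ rather than being contaminated by a spurious factor of $\m r'$ or $\delta$. Verifying that such a pyramid can indeed be carved out in every geometric configuration compatible with $\m r\le \m r'$ and the pointy condition is the core of the geometric work and the only substantive departure from the $\alpha=0$ argument.
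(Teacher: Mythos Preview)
Your treatment of the indicator bound is correct and matches the paper: that part is purely metric and transfers verbatim from Lemma \ref{prop_bound_for_couple_of_vertices}.

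For the measure bound, however, your pyramid strategy has a real obstruction that you flag but do not resolve. You ask for a region $\Pi\subset\Ball'\setminus\Ball$ with Lebesgue volume of order $\m r^{d-1}\delta$ \emph{and} with every point of $\Pi$ at Euclidean distance of order $\m r$ from the origin. These two requirements are incompatible once $\delta$ is much larger than $\m r$: any set contained in the ball $\Ball_{C\m r}(0)$ has Lebesgue volume at most $\kappa_d(C\m r)^d$, which cannot dominate $\m r^{d-1}\delta$ when $\delta\gg \m r$. And this regime does occur, since the constraint $\delta^2\ge(\m r')^2-\m r^2$ allows $\delta$ as large as $\m r+\m r'$ with $\m r'$ arbitrarily large compared to $\m r$. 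So the ``core of the geometric work'' you defer is not just a verification; the construction as you describe it cannot succeed uniformly, and a different region (or a case split on the size of $\delta/\m r$) is needed.

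The paper sidesteps this by abandoning the pyramid idea. It partitions $\Ball\cup\Ball'$ into two half-balls $\Ball_{(\frac12)},\Ball'_{(\frac12)}$ (split by hyperplanes through the origin, so that reflection symmetry of $m_\alpha$ gives exactly $m_\alpha(\Ball_{(\frac12)})=\tfrac12 K_{d,\alpha}\m r^{d+\alpha}$ and similarly for $\Ball'$) and two leftover pieces $\mathcal A,\mathcal A'$. The piece $\mathcal A'$ is a half of $\Ball'$ with a spherical cap removed; its $m_\alpha$-measure is computed by a direct polar integration and shown to satisfy $m_\alpha(\mathcal A')\ge C(\m r')^{d+\alpha}\sqrt{1-\m h/\m r'}\ge C(\m r')^{d+\alpha-1}\delta$, where $\m h$ is the radius of the cap. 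The point is that $\mathcal A'$ is allowed to scale with $\m r'$, so there is no tension between its size in the $\delta$-direction and its distance to the origin; the final inequality $(\m r')^{d+\alpha-1}\ge \m r^{d+\alpha-1}$ then brings the bound back to the desired form. If you want to salvage your approach, you would need either to let $\Pi$ live at distance of order $\m r'$ (and then argue as the paper does), or to split into the cases $\delta\lesssim \m r$ and $\delta\gtrsim \m r$ and handle the second case by a separate, cruder bound.
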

	\begin{proof}
		As detailed in the proof of Lemma \ref{prop_bound_for_couple_of_vertices}, for the condition \((c,c')\in\left(\VOMR(\lbrace0\rbrace\cup\lbrace\x,\y,\y'\rbrace)\right)^2_{\neq}\) to be satisfied, we need \(\partial\Ball\) to exceed \(\Ball'\) by at least a half-sphere. This leads to the bound (see Figure \ref{fig_vol_bounds_condition_on_delta})
		\begin{equation*}
			\1_{(c,c')\in\left(\VOMR(\lbrace0\rbrace\cup\lbrace\x,\y,\y'\rbrace)\right)^2_{\neq}} \leq 1_{\delta^2\geq (\m r')^2-\m r^2}.
		\end{equation*}
		We now construct a lower bound for \(m_\alpha(\Ball\cup\Ball')\) under the conditions \(\delta^2\geq (\m r')^2-\m r^2\) and \(\m r'\geq \m r\). As in Figure \ref{fig_vol_bounds_alpha}, we partition \(\Ball\cup\Ball'\) into the four disjoint sets \(\Ball_{(\frac{1}{2})}\), \(\Ball'_{(\frac{1}{2})}\), \(\mathcal{A}\) and \(\mathcal{A}'\).
		Then, we bound the \(m_\alpha\)-measure of each of these four sets separately. The measures \(m_\alpha(\Ball_{(\frac{1}{2})})\) and \(m_\alpha(\Ball'_{(\frac{1}{2})})\) are invariant by rotation around the origin of \(\Ball_{(\frac{1}{2})}\) and \(\Ball'_{(\frac{1}{2})}\) respectively, hence 
		\begin{equation*}
		m_\alpha(\Ball_{(\frac{1}{2})})=\frac{1}{2}m_\alpha(\Ball)=\frac{K_{d,\alpha}\m r^{d+\alpha}}{2} \text{ and } m_\alpha(\Ball'_{(\frac{1}{2})})=\frac{K_{d,\alpha}(\m r')^{d+\alpha}}{2}.
		\end{equation*}
		
		\begin{figure}[h!]
			\centering	\includegraphics[scale=0.8]{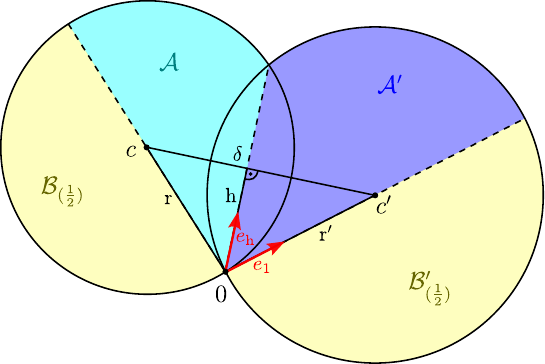}
			\caption{Partition of the set \(\Ball\cup\Ball'\) into the two half balls \(\Ball_{(\frac{1}{2})},\Ball'_{(\frac{1}{2})}\) and the truncated half balls \(\mathcal{A}\) and \(\mathcal{A}'\). The interface between \(\mathcal{A}\) and \(\mathcal{A}'\) is the disk of radius \(h\) with border \(\partial\Ball\cap\partial\Ball'\). In red, the unit vectors \(e_1\) and \(e_{\m h}\) pointing respectively towards the center of \(\Ball\) and the center of \(\partial\Ball\cap\partial\Ball'\).}	\label{fig_vol_bounds_alpha}
		\end{figure}
		
		Moreover, for our purposes we do not need to consider the set \(\mathcal{A}\), so we simply bound \(m_\alpha(\mathcal{A})\geq0\). Finally, we construct the bound for \(m_\alpha(\mathcal{A}')\). We describe \(\mathcal{A}'\) as a half-ball of radius \(\m r'\) with a spherical cap missing. This spherical cap is pinned at \(0\) and has radius
		\begin{equation}\label{eq_value_of_h_alpha_case}
			\m h = \frac{\sqrt{2(\m r^2(\m r')^2+\delta^2(\m r')^2+\delta^2\m r^2)-(\delta^4+\m r^4+(\m r')^4)}}{2\delta} \in [0,\m r]
		\end{equation}
		where the domain of \(\m h\) is a consequence of the conditions \(\m r\leq \m r'\) and \(\delta^2\geq(\m r')^2-\m r^2\), see Figure \ref{fig_vol_bounds_condition_on_delta}. Let us write an explicit integral expression for \(m_\alpha(\mathcal{A'})\). As in Figure \ref{fig_integral_m_alpha_A}, we choose an orthogonal coordinate system  of \(\R^d\)  \((e_1,...,e_d)\) such that the disk of radius \(\m r'\) supporting the base of \(\mathcal{A}'\) has center \(\|c'\|e_1\) and is included in the plane \(e_2^\perp\) and that the disk of radius \(\m h\) supporting the cap is included in the hyperplane \(\text{Vect}_{\R}\left(e_h,e_3,...,e_d\right)\), where \(e_{\m h}=\frac{\m h}{\m r'}e_1+\sqrt{1-\left(\frac{\m h}{\m r'}\right)^2}e_2\). Denoting by \(\m z=(\m z_1,...,\m z_d)\) the variable of integration in this coordinate system and by \(d\m z\) the Lebesgue measure on \(\R^d\), we have \(m_\alpha(\mathcal{A'})=\int_{\mathcal{A}'}d\m z \|\m z\|^\alpha\). We want to perform this integration on the sets \(D_\varphi\), i.e. the disks of radius \(r'\cos(\varphi)\) determined by the intersection of \(\mathcal{A}'\) with the hyperplane perpendicular to \(-\cos(\varphi)e_1+\sin(\varphi)e_2\) for \(\varphi \in [0,\arccos(\frac{\m h}{\m r'})]\), see Figure \ref{fig_integral_m_alpha_A}. 
		\begin{figure}[h!]
			\centering
			\includegraphics[scale=0.8]{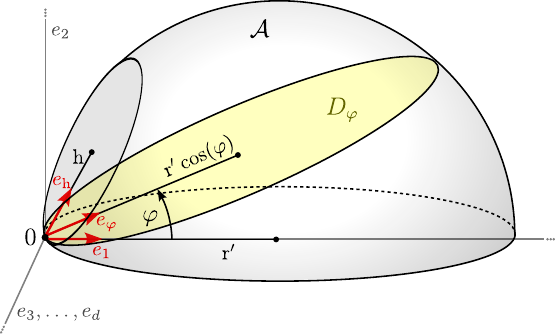}
			\caption{Coordinate system used to obtain the integral expression for \(m_\alpha(\mathcal{A}')\) of equation (\ref{eq_integral_expression_m_alpha_A'}).}
			\label{fig_integral_m_alpha_A}
		\end{figure}
		Therefore, we introduce a polar change of variables on the \((\m z_1,\m z_2)\), i.e. we introduce variables \(\varphi=\arctan\left(\frac{\m z_2}{\m z_1}\right)\) and \((\m w_1,...,\m w_{d-1})=(\sqrt{\m z_1^2+\m z_2^2},\m z_3,...,\m z_d)\), so that for fixed \(\varphi\), \(\m w = (\m w_1,...,\m w_{d-1})\) represent the local coordinates on \(D_\varphi\) with respect to the coordinate system \((\tilde{e}=\cos(\varphi)e_1+\sin(\varphi)e_2,e_3,...,e_d)\), see Figure  \ref{fig_integral_m_alpha_A}. The Jacobian for this change of variables is
		\begin{equation*}
			d\m z_1 ... d\m z_d = d\varphi d\m w_1 ... d\m w_{d-1} \m w_1.
		\end{equation*}
		The domain of integration of \(\varphi\) is the interval \(\left[0,\arccos\left(\frac{\m h}{\m r'}\right)\right]\). Moreover \(\|\m z\|=\|\m w\| = \sqrt{\m w_1^2+...\m w_{d-1}^2}\). Therefore,
		\begin{equation}\label{eq_integral_expression_m_alpha_A'}
			m_\alpha(\mathcal{A}') = \int_{\mathcal{A}'}d\m z \|\m z\|^\alpha = \int\limits_{0}^{\arccos\left(\frac{\m h}{\m r'}\right)} d\varphi \, \int\limits_{D_\varphi} d\m w \, \|\m w\|^\alpha \m w_1.
		\end{equation}
		We introduce the normalized variables \(\Tilde{\m w}_i = \frac{\m w_i}{\m r'\cos(\varphi)}\) for \(i=1,...,d-1\): this choice makes the domain of integration of \(\Tilde{\m w}=(\Tilde{\m w}_1,..., \Tilde{\m w}_{d-1})\) independent from \(\varphi\). Indeed, after this change of variables,
		\begin{align}\label{eq_bound_on_m_A'}
		    m_\alpha(\mathcal{A}') & = \int\limits_{0}^{\arccos\left(\frac{\m h}{\m r'}\right)} d\varphi (\m r'\cos(\varphi))^{d+\alpha}\, \int\limits_{\Ball^{(d-1)}_1(\Tilde{e})} d\Tilde{\m w} \, \|\Tilde{\m w}\|^\alpha \Tilde{\m w}_1 \nonumber
		    \\
		    & \geq C (\m r')^{d+\alpha} \int\limits_{0}^{\arccos\left(\frac{\m h}{\m r'}\right)} d\varphi (\cos(\varphi))^{d+\alpha} \nonumber
		    \\
		    & \geq C (\m r')^{d+\alpha} \sqrt{1-\frac{\m h}{\m r'}}
		\end{align}
		where \(C\) denotes again a generic positive constant that can change value from line to line. Concerning the square root factor in the right-hand side of \eqref{eq_bound_on_m_A'}, we use \(0\leq\frac{\m h}{\m r'}\leq\frac{\m r}{\m r'}\leq 1\) to remark that
		\begin{equation}\label{eq_94}
		    \sqrt{1-\frac{\m h}{\m r'}} 
			=
			\frac{1}{\sqrt{1+\frac{\m h}{\m r'}}}
			\sqrt{1-\left(\frac{\m h}{\m r'}\right)^2} 
			\geq \frac{1}{\sqrt{2}} \sqrt{1-\left(\frac{\m h}{\m r'}\right)^2}.
		\end{equation}
		We use the identity (\ref{eq_value_of_h_alpha_case}) to express the variable \(\m h\) in the square root factor in the right-hand side of \eqref{eq_94} with respect to \(\delta\) and obtain
		\begin{align}\label{eq_bound_for_root}
			\sqrt{1-\left(\frac{\m h}{\m r'}\right)^2} &= \sqrt{\frac{(\m r')^4+\m r^4+\delta^4-2(\m r')^2\m r^2+2(\m r')^2\delta^2-2\m r^2\delta^2}{4(\m r')^2\delta^2}} \nonumber \\
			& = \sqrt{\frac{\left((\m r')^2+\delta^2-\m r^2\right)^2}{4\delta^2(\m r')^2}} = \frac{\m r'}{2\delta} \left(1+\left(\frac{\delta}{\m r'}\right)^2-\left(\frac{\m r}{\m r'}\right)^2\right)\geq\frac{\delta}{2\m r'}
		\end{align}
		where the last inequality holds because \(\m r' \geq \m r\). Merging \eqref{eq_bound_for_root} and \eqref{eq_94} and inserting the resulting bound into \eqref{eq_bound_on_m_A'} shows that there is a positive constant \(L\) such that \(m_\alpha(\mathcal{A}')\geq L (\m r')^{d+\alpha-1}\delta\). Finally, combining the bounds for the \(m_\alpha\)-measures of \(\Ball_{(\frac{1}{2})}\), \(\Ball'_{(\frac{1}{2})}\), \(\mathcal{A}\) and \(\mathcal{A}'\) presented so far and using \(\m r'\geq \m r\) we conclude
		\begin{align*}
		    m_\alpha(\Ball\cup\Ball') & = m_\alpha(\Ball_{(\frac{1}{2})}) + m_\alpha(\Ball'_{(\frac{1}{2})}) + m_\alpha(\mathcal{A}) + m_\alpha(\mathcal{A}') \\
		    & \geq \frac{K_{d,\alpha}\m r^{d+\alpha}}{2} + \frac{K_{d,\alpha}(\m r')^{d+\alpha}}{2} +0 + L (\m r')^{d+\alpha-1}\delta \geq K_{d,\alpha}\m r^{d+\alpha} + L \m r^{d+\alpha-1}\delta.
		\end{align*}
	\end{proof}
	After fixing \(\m r'\geq \m r\), we need to bound the norms of $\m x_i$, $\m y_j$ and $\m y_j'$ in \eqref{eq_after_Nikitenko_alpha}, i.e.
	\begin{align}\label{eq_bound_on_norm_x_y_y}
		\|\m x_i\| \leq 2 \rho_0 \text{ for } i=1,...,k , \|\m y_j\|\leq 2 \m r \text{ and }\|\m y_j'\|\leq 2 \m r' \text{ for } j=1,...,d-k.
	\end{align}
	Inserting (\ref{eq_bound_on_norm_x_y_y})  and the bound of Lemma \ref{lemma_bound_on_volume_alpha} into (\ref{eq_after_Nikitenko_alpha}), we proceed as in the end of Step 3 of Section \ref{sec_expected_nr_pairs}. That is, we bound the volume of the simplices by a constant, then we integrate the variables \(\m u_0,\vbf,\wbf,\wbf'\) and finally we fix wlog \(\m Q=\m Q_0=\text{Vect}_{\R}(e_1,...,e_k)\) and \(\m u'=e_d\) to obtain
	\begin{align}\label{eq_bound_with_only_u_rho_r_r'_alpha}
		\E \left[ \Card \left((\VOMR)^2_{\neq}\right) \right] =  C \sum\limits_{k=0}^{d-1} \int\limits_{t}^{\infty} d\m r \int\limits_{\m r}^{\infty} d\m r' \int\limits_{0}^{\m r} d\rho_0 \int\limits_{\Sp_{\m Q_0^\perp}} d\m u \, & \rho_0^{k(d+\alpha)-1} (\m r \m r')^{(d-k)(d+\alpha-1)+1} \left((\m r^2-\rho_0^2)((\m r')^2-\rho_0^2)\right)^{\frac{d-k-2}{2}} \nonumber
		\\
		& \hspace{3cm}   e^{-K_{\alpha,d}\m r^{d+\alpha}-L\m r^{d+\alpha-1}\delta} 1_{\delta^2\geq (\m r')^2-\m r^2}
	\end{align}
	where, as before, \(C\) denotes a generic positive constant and \(\delta=\|c-c'\|\) is given by (\ref{eq_definition_delta_before_step_4}).
	The analytical treatment of (\ref{eq_bound_with_only_u_rho_r_r'_alpha}) is precisely the same as in Steps 4-6 of Section \ref{sec_expected_nr_pairs} and leads to the bound% of Proposition \ref{prop_expectation_couples_alpha_case}.
	\begin{equation*}
		\E\left[\Card\left((\VOMR(\Phi_\alpha))^2_{\neq}\right)\right] = \mathcal{O}\left( t^{d^2+d(\alpha-2)+\alpha-2} e^{K_{d,\alpha}t^{d+\alpha}}\right) = o\left(\E\left[\Card\left(\VOMR(\Phi_\alpha)\right)\right]\right) \text{ as } t\rightarrow\infty.
	\end{equation*}

        ~\\
        Pierre Calka, Univ Rouen Normandie, CNRS, Normandie Univ, LMRS UMR 6085, Rouen, F-76000, France;\quad pierre.calka@univ-rouen.fr\\~\\
	Cecilia D'Errico, Laboratoire de Math\'ematiques d'Orsay, Universit\'e Paris-Saclay, B\^atiment 307, F-91405 Orsay, France;\quad cecilia.derrico@universite-paris-saclay.fr\\~\\
        Nathana\"el Enriquez, Laboratoire de Math\'ematiques d'Orsay, Universit\'e Paris-Saclay, B\^atiment 307, F-91405 Orsay, France;\quad nathanael.enriquez@universite-paris-saclay.fr
\end{document}